\numberwithin{equation}{section}
\newtheorem{Theorem}{Theorem}[section]
\newtheorem{Corollary}[Theorem]{Corollary}
\newtheorem{Lemma}[Theorem]{Lemma}
\newtheorem{Proposition}[Theorem]{Proposition}
{\theoremstyle{definition}
\newtheorem{Definition}[Theorem]{Definition}

\newtheorem{Remark}[Theorem]{Remark}
\newtheorem{Notation}[Theorem]{Notation}
}
\newcommand{\nc}{\R}
\newcommand{\g}{\mathfrak{g}}
\newcommand{\h}{\mathfrak{h}}
\newcommand{\kk}{\mathfrak{k}}
\newcommand{\ttt}{\mathfrak{t}}
\newcommand{\p}{\mathfrak{p}}
\newcommand{\n}{\mathfrak{n}}
\newcommand{\llll}{\mathfrak{l}}
\newcommand{\uu}{\mathfrak{u}}
\newcommand{\bb}{\mathfrak{b}}
\newcommand{\q}{\mathfrak{q}}
\newcommand{\su}{\mathfrak{su}}
\newcommand{\ssl}{\mathfrak{sl}}
\newcommand{\G}{\mathbb{G}}
\newcommand{\C}{\mathbb{C}}
\newcommand{\R}{\mathbb{R}}
\newcommand{\Z}{\mathbb{Z}}
\newcommand{\N}{\mathbb{N}}
\newcommand{\Hh}{\mathbb{H}}
\newcommand{\X}{\mathbb{X}}
\newcommand{\ZZ}{\mathscr{Z}}
\newcommand{\ad}{\mathrm{ad}}
\newcommand{\Hsp}{\mathscr{H}}
\newcommand{\qn}[2]{\lbrack #1 \rbrack_{#2}}
\newcommand{\Tr}{\mathrm{Tr}}
\newcommand{\Ad}{\mathrm{Ad}}
\newcommand{\du}[1]{#1^{\vee}}
\newcommand{\mmod}{\,\mathrm{mod}\,}
\DeclareMathOperator{\id}{id}
\DeclareMathOperator{\ext}{\mathrm{ext}}
\DeclareMathOperator{\fin}{\mathrm{fin}}
\DeclareMathOperator{\Pol}{\mathrm{Pol}}
\DeclareMathOperator{\reg}{\mathrm{reg}}
\DeclareMathOperator{\sgn}{\mathrm{sgn}}
\DeclareMathOperator{\Ind}{\mathrm{Ind}}
\DeclareMathOperator{\Char}{\mathrm{Char}}
\DeclareMathOperator{\triv}{\mathrm{triv}}
\DeclareMathOperator{\sss}{\mathrm{ss}}
\begin{document}

\allowdisplaybreaks

\renewcommand{\thefootnote}{$\star$}

\renewcommand{\PaperNumber}{081}

\FirstPageHeading

\ShortArticleName{Representations of Interpolating QUE Algebras}

\ArticleName{Representation Theory of Quantized Enveloping\\
Algebras with Interpolating Real Structure\footnote{This paper is a~contribution to the
Special Issue on Noncommutative Geometry and Quantum Groups in honor of Marc A.~Rieffel.
The full collection is available at \href{http://www.emis.de/journals/SIGMA/Rieffel.html}
{http://www.emis.de/journals/SIGMA/Rieffel.html}}}

\Author{Kenny DE COMMER}

\AuthorNameForHeading{K.~De Commer}

\Address{Department of Mathematics, University of Cergy-Pontoise,\\
UMR CNRS 8088, F-95000 Cergy-Pontoise, France}
\Email{\href{mailto:Kenny.De-Commer@u-cergy.fr}{Kenny.De-Commer@u-cergy.fr}}
\URLaddress{\url{http://kdecommer.u-cergy.fr}}

\ArticleDates{Received August 18, 2013, in final form December 18, 2013; Published online December 24, 2013}

\Abstract{Let $\g$ be a~compact simple Lie algebra.
We modify the quantized enveloping $^*$-algebra associated to $\g$ by a~real-valued character on the
positive part of the root lattice.
We study the ensuing Verma module theory, and the associated quotients of these modified quantized
enveloping $^*$-algebras.
Restricting to the locally finite part by means of a~natural adjoint action, we obtain in particular
examples of quantum homogeneous spaces in the operator algebraic setting.}

\Keywords{compact quantum homogeneous spaces; quantized universal enveloping algebras; Hopf--Galois theory;
Verma modules}

\Classification{17B37; 20G42; 46L65}

\renewcommand{\thefootnote}{\arabic{footnote}}
\setcounter{footnote}{0}

\pdfbookmark[1]{Introduction}{intro} \section*{Introduction}

This paper reports on preliminary work related to the quantization of non-compact semi-simple Lie groups.
The main idea behind such a~quantization is based on the reflection technique developed in~\cite{DeC2}
and~\cite{Eno1} (see also~\cite{DeC3} and~\cite{DeC4} for concrete, small-dimensional examples relevant to
the topic of this paper).
Briefly, this technique works as follows.
Let $\G$ be a~compact quantum group acting on a~compact quantum homogeneous space~$\X$.
Assume that the von Neumann algebra $\mathscr{L}^{\infty}(\X)$ associated to~$\X$ is a~type $I$ factor.
Then the action of $\G$ on $\mathscr{L}^{\infty}(\X)$ can be interpreted as a~projective representation
of~$\G$, and one can deform $\G$ with the `obstruction' associated to this projective representation to
form a~new \emph{locally} compact quantum group~$\Hh$.
More generally, if $\mathscr{L}^{\infty}(\X)$ is only a~finite direct sum of type $I$-factors, one can
construct $\Hh$ as a~locally compact quantum group\emph{oid} (of a~particularly simple type).
Our idea is to fit the quantizations of non-compact semi-simple Lie groups into this framework, obtaining
them as a~reflection of the quantization of their compact companion.
For this, one needs the proper quantum homogeneous spaces to feed the machinery with.

It is natural to expect the needed quantum homogeneous space to be a~quantization of a~compact symmetric
space associated to the non-compact semi-simple Lie group.
By now, there is much known on the quantization of symmetric spaces (see~\cite{Let1,Let2} and references
therein, and~\cite{Vak1} for the non-compact situation), but these results are mostly of an algebraic
nature, and not much seems known about corresponding operator algebraic constructions except for special
cases.
In fact, in light of the motivational material presented in Appendix~\ref{SecLieAlg}, we will instead of
symmetric spaces use certain quantizations of (co)adjoint orbits, following the approach
of~\cite{Don2,Kar1,Don1}.
Here, one rather constructs quantum homogeneous spaces as subquotients of (quantized) universal enveloping
algebras in certain highest weight representations.
We will build on this approach by combining it with real structures and the contraction technique.

Our main result, Theorem~\ref{TheoFI}, will consist in showing that the compact quantum homogeneous spaces
that we build do indeed consist of finite direct sums of type~I factors.
This will give a~theoretical underpinning and motivation for the claim that the above mentioned
quantizations of non-compact semi-simple groups can indeed be constructed using the reflection technique.
Our results are however quite incomplete as of yet, as
\begin{itemize}\itemsep=0pt
\item[$\bullet$] in the non-contracted case, we can only treat concretely the case of
Hermitian symmetric spaces, \item[$\bullet$] a~more detailed analysis of the resulting quantum homogeneous
spaces is missing, \item[$\bullet$] the relation to known quantum homogeneous spaces is not elucidated,
\item[$\bullet$] no precise connection with deformation quantization is provided, \item[$\bullet$] the
relation with the approach of Korogodsky~\cite{Kor1} towards the quantization of non-compact Lie groups
remains to be clarified.
\end{itemize}

We hope to come back to the above points in future work.

The structure of this paper is as follows.
In Section~\ref{section1}, we introduce the `modified'
quantized universal enveloping algebras we will be studying,
and state their main properties in analogy with the ordinary quantized universal enveloping algebras.
In Section~\ref{section2}, we introduce a~theory of Verma modules, and study the associated
unitarization problem.
In Section~\ref{section3}, we study subquotients of our generalized quantized universal enveloping
algebras, and show how they give rise to $C^*$-algebraic quantum homogeneous spaces whose associated von
Neumann algebras are direct sums of type~$I$ factors.
In Section~\ref{section4}, we briefly discuss a~case where the associated von Neumann algebra is
simply a~type $I$-factor itself.

In the appendices, we give some further comments on the structures appearing in this paper.
In Appendix~\ref{SecCog}, we recall the notion of cogroupoids~\cite{Bic2} which is very convenient
for our purposes.
In Appendix~\ref{SecLieAlg}, we discuss the Lie algebras which are implicitly behind the
constructions in the main part of the paper.

\section{Two-parameter deformations of quantized\\
enveloping algebras}\label{section1}

Let $\g$ be a~complex simple Lie algebra of rank $l$, with fixed Cartan subalgebra $\h$ and Cartan
decomposition $\g = \n^-\oplus \h \oplus \n^+$.
Let $\Delta \subseteq \h^*$ be the associated finite root system, $\Delta^+$ the set of positive roots,
and $\Phi^+ = \{\alpha_r\,|\, r\in I\}$ the set of simple positive roots.
We identify $I$ and $\Phi^+$ with the set $\{1,\ldots,l\}$ whenever convenient.
Let $\h_{\R}^* \subseteq \h^*$ be the real linear span of the roots, and let $(\;,\,)$ be an inner product on
$\h_{\R}^*$ for which $A = (a_{rs})_{r,s\in I} = \left( (\du{\alpha}_r,\alpha_s)\right)_{r,s\in I}$ is the
Cartan matrix of $\g$, where $\du{\alpha} = \frac{2}{(\alpha,\alpha)}\alpha$ for $\alpha\in \Delta$.
Let $\h_{\R}\subseteq \h$ be the real linear span of the coroots $h_{\alpha}$, where $\beta(h_{\alpha}) =
(\beta,\du{\alpha})$ for $\alpha,\beta\in \Delta$.

We further use the following notation.
We write $\{\omega_r\,|\, r\in I\}$ for the fundamental weights in $\h^*_{\R}$, so $(\omega_r,\du{\alpha}_s)
= \delta_{rs}$.
The $\Z-$lattice spanned by $\{\omega_r\}$ is denoted $P\subseteq \h^*_{\R}$, and $P^+$ denotes elements
expressed as positive linear combinations of this basis.
Similarly, the root lattice spanned by the $\alpha_r$ is denoted $Q$, and its positive span by $Q^+$.
We write $\Char_K(F)$ for the monoid of monoid homomorphisms from a~commutative (additive) monoid $(F,+)$
to a~commutative (multiplicative) monoid $(K,\cdot)$.
For $\varepsilon\in \Char_K(Q^+)$ we will abbreviate $\varepsilon_{\alpha_r} = \varepsilon_r$.
The unit element of $\Char_K(Q^+)$ will be denoted $+$, while the element $\varepsilon$ such that
$\varepsilon_r =0$ for all $r$ will be denoted 0.

We use the following notation for $q$-numbers, where $0<q<1$ is fixed for the rest of the paper:
\begin{itemize}\itemsep=0pt
\item[$\bullet$] for $r\in \Phi^+$, $q_r = q^{\frac{(\alpha_r,\alpha_r)}{2}}$,
\item[$\bullet$] for
$n\geq0$, $\qn{n}{r} = \frac{q_r^{n}-q_r^{-n}}{q_r-q_r^{-1}}$,
\item[$\bullet$] for $n\geq 0$, $\qn{n}{r}!
= \prod\limits_{k=1}^n \qn{k}{r}$,
\item[$\bullet$] for $m\geq n\geq 0$, $\left[\begin{matrix}m \\ n\end{matrix}\right]_r =
\frac{\qn{m}{r}!}{\qn{n}{r}!\qn{m-n}{r}!}$,

\item[$\bullet$] for $\alpha\in \h^*$, we define $q^{\alpha}
\in \Char_{\C}(P)$ by $q^{\alpha}(\omega) = q^{(\alpha,\omega)}$ (where $(\;,\,)$ has been $\C$-linearly
extended to $\h$).
\end{itemize}

We will only work with unital algebras defined over $\C$, and correspondingly all tensor products are
algebraic tensor products over $\C$.
By a~\emph{$^*$-algebra} $A$ will be meant an algebra $A$ endowed with an anti-linear, anti-multiplicative
involution $^*$: $A\rightarrow A$.
We further assume that the reader is familiar with the theory of Hopf algebras.
A \emph{Hopf $^*$-algebra} $(H,\Delta)$ is a~Hopf algebra whose underlying algebra $H$ is a~$^*$-algebra,
and whose comultiplication $\Delta$ is a~$^*$-homomorphism.
This implies that the counit is a~$^*$-homomorphism, and that the antipode $S$ is invertible with
$S^{-1}(h) = S(h^*)^*$ for all $h\in H$.
\begin{Definition}
For $\varepsilon,\eta\in \Char_{\R}(Q^+)$, we define $U_q(\g;\varepsilon,\eta)$ as the universal unital
$^*$-algebra generated by couples of elements $X_r^{\pm}$, $r\in I$, as well as elements $K_{\omega}$,
$\omega\in P$, such that for all $r,s \in I$ and $\omega,\chi\in P$, we have $K_{\omega}$ self-adjoint,
$(X_r^+)^*=X_r^-$ and
\begin{enumerate}\itemsep=0pt
\item[(K)$^q$] $K_{\omega}$ is invertible and $K_{\chi}K_{\omega}^{-1} = K_{\chi-\omega}$,
\item[(T)$^q$]
$K_{\omega}X_{r}^{\pm}K_{\omega}^{-1} = q^{\frac{\pm (\omega,\alpha_r)}{2}} X_{r}^{\pm}$,
\item[(S)$^q$]
$\sum\limits_{k=0}^{1-a_{rs}} (-1)^k \left[\begin{matrix}1-a_{rs} \\ k \end{matrix}\right]_r  (X_r^{\pm})^kX_s^{\pm}(X_r^{\pm})^{1-a_{rs}-k} =
0$ for $r\neq s$,

\item[(C)$^q_{\varepsilon,\eta}$] $\lbrack X_r^+,X_s^-\rbrack = \delta_{rs} \frac{\varepsilon_r
K_{\alpha_r}^2 - \eta_r K_{\alpha_r}^{-2}}{q_r-q_r^{-1}}$.
\end{enumerate}

When $\varepsilon=\eta=+$ (i.e.\
{}$\varepsilon_r=\eta_r=1$ for all $r$), we will denote the underlying algebra as $U_q(\g)$.
This is a~slight variation, obtained by considering $\frac{1}{2}P$ instead of $P$, of the simply-connected
version of the ordinary quantized universal enveloping algebra of $\g$~\cite[Remark 9.1.3]{Cha1}.
\end{Definition}

The $U_q(\g;\varepsilon,\varepsilon)$ are quantizations of the $^*$-algebras $U(\g_{\varepsilon})$,
introduced in Appendix~\ref{SecLieAlg}.
Note also that the algebras $U_q(\g;0,+)$ are well-known to algebraists, see~\cite[Section~3]{Kas1} and~\cite{Jos2}.

\looseness=1
Remark that the algebras $U_q(\g;\varepsilon,\eta)$ with $\prod_{r}\varepsilon_r\eta_r\neq 0$ are
mutually isomorphic as unital algebras.
Indeed, as $A$ is invertible, we can choose $b\in \Char_{\C}(P)$ such that $b_{\alpha_r}^4 =
\eta_r/\varepsilon_r$ for all $r\in \Phi^+$.
We can also choose $a\in \Char_{\C}(Q^+)$ such that $a_r$ is a~square root of $b_{\alpha_r}^2\varepsilon_r
= \frac{\eta_r}{b_{\alpha_r}^2}$.
Then
\begin{gather}
\label{Iso}
\phi\colon  \ U_q(\g;\varepsilon,\eta)\rightarrow U_q(\g),
\qquad
\begin{cases}
X_r^+ \mapsto a_rX_r^+,
\\
X_r^- \mapsto a_rX_r^-,
\\
K_{\omega} \mapsto b_{\omega}K_{\omega}
\end{cases}
\end{gather}
is a~unital isomorphism.
However, unless $\sgn(\varepsilon_r\eta_r)=+$ for all $r$, in which case we can choose $b\in
\Char_{\R^+}(P)$ and $a\in \Char_{\R^+}(Q^+)$, this rescaling will not respect the $^*$-structure.

We list some properties of the $U_q(\g;\varepsilon,\eta)$.
The proofs do not differ from those for the well-known case $U_q(\g)$, cf.~\cite{Ros1} and~\cite[Section~4]{Jos1}.

{\samepage \begin{Proposition}\label{PropEasy}\qquad
\begin{enumerate}\itemsep=0pt
\item[$1.$] Let $U_q(\n^{\pm})$ be the unital subalgebra generated by the $X_r^{\pm}$ inside
$U_q(\g;\varepsilon,\eta)$.
Then $U_q(\n^{\pm})$ is the universal algebra generated by elements $X_r^{\pm}$ satisfying the relations~$({\rm S})^q$, and in particular does not depend on $\varepsilon$ or $\eta$.
\item[$2.$] Similarly, let $U_q(\bb^{\pm})$ be the subalgebra generated by $U_q(\n^{\pm})$ and $U(\h)$, where
$U(\h)$ is the algebra generated by all $K_{\omega}$.
Then $U_q(\bb^{\pm})$ is universal with respect to the relations $({\rm K})^q$, $({\rm T})^q$ and $({\rm S})^q$.
\item[$3.$] $($Triangular decomposition$)$ The multiplication map gives an isomorphism
\begin{gather}
\label{EqTriang}
U_q\big(\n^+\big)\otimes U(\h)\otimes U_q(\n^-)\rightarrow U_q(\g;\varepsilon,\eta).
\end{gather}
\end{enumerate}
\end{Proposition}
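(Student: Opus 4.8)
The plan is to deduce all three statements from a single one. Write $\mathcal{A}^{\pm}$ for the abstract unital $\C$-algebras presented by generators $X_r^{\pm}$ ($r\in I$) modulo \emph{only} the Serre relations $({\rm S})^q$, and $\C[P]$ for the group algebra of the lattice $P$ (all of which exist by construction and involve neither $\varepsilon$ nor $\eta$). There are canonical homomorphisms $\mathcal{A}^{\pm}\to U_q(\g;\varepsilon,\eta)$ and $\C[P]\to U_q(\g;\varepsilon,\eta)$, and I claim the linear map $\mu\colon \mathcal{A}^{+}\otimes\C[P]\otimes\mathcal{A}^{-}\to U_q(\g;\varepsilon,\eta)$, obtained by multiplying the images of the three legs, is a linear isomorphism. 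Granting this, restriction of $\mu$ to $\mathcal{A}^{+}\otimes 1\otimes 1$ and to $1\otimes 1\otimes\mathcal{A}^{-}$, to $1\otimes\C[P]\otimes 1$, and to $\mathcal{A}^{+}\otimes\C[P]\otimes 1$ identifies $U_q(\n^{\pm})$, $U(\h)$ and $U_q(\bb^{+})$ (and symmetrically $U_q(\bb^{-})$) with $\mathcal{A}^{\pm}$, $\C[P]$ and $\mathcal{A}^{+}\otimes\C[P]$; together with the observation that $\mathcal{A}^{+}\otimes\C[P]$ is itself universal for $({\rm K})^q$, $({\rm T})^q$, $({\rm S})^q$ (by the same reordering applied internally), this is~1 and~2, and $\mu$ is~3. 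For parameters with $\prod_r\varepsilon_r\eta_r\neq 0$ one can bypass the work: the isomorphism $\phi$ of~\eqref{Iso} merely rescales generators, and since the Serre relations are homogeneous it carries $U_q(\n^{\pm})$, $U(\h)$, $U_q(\bb^{\pm})$ onto their counterparts in $U_q(\g)$, reducing 1--3 to the classical case. Only the degenerate parameters genuinely need the argument below, whose essential part is in any case insensitive to $\varepsilon,\eta$.

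\emph{Surjectivity of $\mu$.} From $({\rm K})^q$ one gets $K_0=1$, $K_{\omega}^{-1}=K_{-\omega}$ and $K_{\omega}K_{\chi}=K_{\omega+\chi}$, so $U(\h)$ is spanned by the $K_{\omega}$. In an arbitrary word in the generators, use $({\rm T})^q$ to gather all $K_{\omega}$ in the middle (this only produces scalars $q^{\pm(\omega,\alpha_r)/2}$), and $({\rm C})^q_{\varepsilon,\eta}$ to swap any adjacent pair in which an $X^{-}$ precedes an $X^{+}$; the correction term $\delta_{rs}(\varepsilon_rK_{\alpha_r}^2-\eta_rK_{\alpha_r}^{-2})(q_r-q_r^{-1})^{-1}$ lies in $U(\h)$ and has strictly fewer $X$-letters. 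Induction on word length gives surjectivity.

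\emph{Injectivity of $\mu$.} This is the substance of the proposition. I would equip the vector space $N:=\mathcal{A}^{+}\otimes\C[P]\otimes\mathcal{A}^{-}$ with a left $U_q(\g;\varepsilon,\eta)$-module structure modelling ``multiply on the left, then reorder into triangular form'': $X_r^{+}$ acts by left multiplication on the first leg; $K_{\omega}$ sends $x^{+}\otimes K_{\chi}\otimes x^{-}$, with $x^{+}$ homogeneous of root-lattice degree $\nu$, to $q^{(\omega,\nu)/2}x^{+}\otimes K_{\omega+\chi}\otimes x^{-}$; and $X_r^{-}$ sends $x^{+}\otimes K_{\chi}\otimes x^{-}$ to $q^{(\chi,\alpha_r)/2}x^{+}\otimes K_{\chi}\otimes X_r^{-}x^{-}$ plus a correction term --- assembled from a twisted skew-derivation ${}_r\partial$ of $\mathcal{A}^{+}$ with ${}_r\partial(X_s^{+})=\delta_{rs}$ and a $\C[P]$-coefficient depending on $\nu$ --- that reproduces the iterated commutator $[X_r^{-},x^{+}]$ prescribed by $({\rm C})^q_{\varepsilon,\eta}$. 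Once this is known to be a module, the map $\psi\colon U_q(\g;\varepsilon,\eta)\to N$, $u\mapsto u\cdot(1\otimes 1\otimes 1)$, satisfies $\psi(\mu(x^{+}\otimes K_{\omega}\otimes x^{-}))=x^{+}\otimes K_{\omega}\otimes x^{-}$ (because ${}_r\partial(1)=0$), so $\psi\circ\mu=\id$, $\mu$ is injective, and with surjectivity we are done.

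The main obstacle is exactly what is hidden in ``once this is known to be a module'': one must check that ${}_r\partial$ is well defined on $\mathcal{A}^{\pm}$, i.e.\ preserves the Serre ideal, that it obeys the correct $\sigma$-twisted Leibniz rule, and that the operators thereby produced satisfy the $q$-Serre relations $({\rm S})^q$ on $N$; the remaining relations $({\rm K})^q$, $({\rm T})^q$, $({\rm C})^q_{\varepsilon,\eta}$ then reduce to bookkeeping with the root-lattice grading and the commutation of $\C[P]$ past the $X^{\pm}$. This is precisely the classical lemma underlying the triangular decomposition of $U_q(\g)$, and since $({\rm S})^q$, $({\rm K})^q$ and $({\rm T})^q$ contain neither $\varepsilon$ nor $\eta$ its proof carries over verbatim (cf.~\cite{Ros1} and~\cite[Section~4]{Jos1}); the parameters intervene only through the single explicit verification of $({\rm C})^q_{\varepsilon,\eta}$ for the $N$-action.
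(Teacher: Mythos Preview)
Your proposal is correct and follows precisely the route the paper indicates: the paper does not give an explicit argument but merely asserts that the proofs ``do not dif\/fer from those for the well-known case $U_q(\g)$'' and points to~\cite{Ros1} and~\cite[Section~4]{Jos1}, which is exactly the standard triangular-decomposition argument you have sketched (reordering for surjectivity, a module structure on $\mathcal{A}^{+}\otimes\C[P]\otimes\mathcal{A}^{-}$ for injectivity). If anything you have supplied more detail than the paper, and you correctly isolate that the only place $\varepsilon,\eta$ enter is the single relation~$({\rm C})^q_{\varepsilon,\eta}$, whose verification on~$N$ is routine once the $\varepsilon,\eta$-independent skew-derivations are in hand.
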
}

In particular, the $U_q(\g;\varepsilon,\eta)$ are non-trivial.
Note that the above proposition allows one to identify $U_q(\g;\varepsilon,\eta)$ and $U_q(\g)$ as
\emph{vector spaces},
\begin{gather*}
i_{\varepsilon,\eta}\colon  \ U_q(\g;\varepsilon,\eta)\rightarrow U_q(\g),
\qquad
x_+x_0x_-\mapsto x_+x_0x_-,
\qquad
x_{\pm}\in U_q\big(\n^{\pm}\big),
\qquad
x_0\in U(\h).
\end{gather*}
Hence $U_q(\g;\varepsilon,\eta)$ may also be viewed as $U_q(\g)$ with a~deformed product
$m_{\varepsilon,\eta}$.
Indeed, arguing as in~\cite[Section~4]{Kass1}, one can show that
\begin{gather*}
m_{\varepsilon,\eta}(x\otimes y)=\omega_{\varepsilon}(x_{(1)},y_{(1)})x_{(2)}y_{(2)}\omega_{\eta}(x_{(3)}
,y_{(3)})
\end{gather*}
for certain cocycles $\omega_{\varepsilon}$ on $U_q(\g)$.

Also the following proposition is immediate.
\begin{Proposition}
For each $\varepsilon,\mu,\eta \in \Char_{\R}(Q^+)$, there exists a~unique unital $^*$-homomorphism
\begin{gather*}
\Delta_{\varepsilon,\eta}^{\mu}
\colon \ U_q(\g;\varepsilon,\eta)\rightarrow U_q(\g;\varepsilon,\mu)\otimes U_q(\g;\mu,\eta)
\end{gather*}
such that $\Delta_{\varepsilon,\eta}^{\mu}(K_{\omega}) = K_{\omega}\otimes K_{\omega}$ for all $\omega\in
P$ and
\begin{gather*}
\Delta_{\varepsilon,\eta}^{\mu}\big(X_r^{\pm}\big)=X_r^{\pm}\otimes K_{\alpha_r}+K_{\alpha_r}^{-1}\otimes X_r^{\pm},
\qquad
\forall \, r\in\Phi^+.
\end{gather*}
\end{Proposition}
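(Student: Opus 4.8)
The plan is to verify that the prescribed assignments on generators are compatible with all the defining relations of $U_q(\g;\varepsilon,\eta)$, so that the universal property yields the desired unital $^*$-homomorphism; uniqueness is automatic since the $K_\omega$, $X_r^\pm$ generate. The key observation is that the target $U_q(\g;\varepsilon,\mu)\otimes U_q(\g;\mu,\eta)$ is a $^*$-algebra (tensor product of $^*$-algebras) and that the formulas for $\Delta_{\varepsilon,\eta}^{\mu}$ coincide, as abstract expressions in generators, with the ordinary Drinfeld–Jimbo coproduct; what changes between the factors is only the parameter in relation $(\mathrm{C})^q_{\cdot,\cdot}$. So most of the relation-checking is the standard computation, carried out in the algebra $U_q(\n^+)\otimes U(\h)\otimes U_q(\n^-)$ in each tensor leg, and is insensitive to $\varepsilon,\eta,\mu$: this handles $(\mathrm{K})^q$ (immediate, since $K_\omega\otimes K_\omega$ is invertible with inverse $K_{-\omega}\otimes K_{-\omega}$ and the lattice relation is multiplicative), $(\mathrm{T})^q$ (a short weight computation: each summand of $\Delta(X_r^\pm)$ is an eigenvector of $\mathrm{ad}(K_\omega\otimes K_\omega)$ of the same eigenvalue $q^{\pm(\omega,\alpha_r)/2}$), and $(\mathrm{S})^q$ (the quantum Serre relation, which holds because $\Delta(X_r^\pm)$ and $\Delta(X_s^\pm)$ satisfy the same $q$-commutation bookkeeping as in $U_q(\g)$, using Proposition~\ref{PropEasy}(1) that $U_q(\n^\pm)$ is $\varepsilon,\eta$-independent).

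The one place where the parameters genuinely enter is relation $(\mathrm{C})^q_{\varepsilon,\eta}$, and this is the step I expect to be the crux, though it is a one-line miracle rather than a hard argument. Applying $\Delta_{\varepsilon,\eta}^{\mu}$ to $[X_r^+,X_s^-]$ and expanding, the cross terms vanish for $r\neq s$ by the same computation as in $U_q(\g)$, and for $r=s$ one gets
\begin{gather*}
\big[X_r^+\otimes K_{\alpha_r}+K_{\alpha_r}^{-1}\otimes X_r^+,\ X_r^-\otimes K_{\alpha_r}+K_{\alpha_r}^{-1}\otimes X_r^-\big]
= [X_r^+,X_r^-]\otimes K_{\alpha_r}^2 + K_{\alpha_r}^{-2}\otimes[X_r^+,X_r^-],
\end{gather*}
the two mixed terms cancelling because $X_r^\pm K_{\alpha_r}^{\mp1}\otimes K_{\alpha_r}X_r^{\mp}$-type products recombine (using $(\mathrm{T})^q$) into a commutator that telescopes. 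Now substitute $(\mathrm{C})^q$ in each leg: in the first leg with parameters $(\varepsilon,\mu)$ and in the second with $(\mu,\eta)$. The result is
\begin{gather*}
\frac{\varepsilon_r K_{\alpha_r}^2 - \mu_r K_{\alpha_r}^{-2}}{q_r-q_r^{-1}}\otimes K_{\alpha_r}^2
+ K_{\alpha_r}^{-2}\otimes \frac{\mu_r K_{\alpha_r}^2 - \eta_r K_{\alpha_r}^{-2}}{q_r-q_r^{-1}}
= \frac{\varepsilon_r K_{\alpha_r}^2\otimes K_{\alpha_r}^2 - \eta_r K_{\alpha_r}^{-2}\otimes K_{\alpha_r}^{-2}}{q_r-q_r^{-1}},
\end{gather*}
where the two terms carrying $\mu_r$ cancel exactly; the right-hand side is precisely $\delta_{rr}$ times the image under $\Delta_{\varepsilon,\eta}^{\mu}$ of $\frac{\varepsilon_r K_{\alpha_r}^2 - \eta_r K_{\alpha_r}^{-2}}{q_r-q_r^{-1}}$, since $\Delta_{\varepsilon,\eta}^{\mu}(K_{\alpha_r}^{\pm2})=K_{\alpha_r}^{\pm2}\otimes K_{\alpha_r}^{\pm2}$. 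This cancellation of the auxiliary parameter $\mu$ is the whole point, and it is what makes the interpolating family close under comultiplication.

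Finally I would check that the map respects the $^*$-structure. Each $K_\omega$ is self-adjoint, hence so is $K_\omega\otimes K_\omega$; and $\big(X_r^+\otimes K_{\alpha_r}+K_{\alpha_r}^{-1}\otimes X_r^+\big)^* = X_r^-\otimes K_{\alpha_r}+K_{\alpha_r}^{-1}\otimes X_r^-$ using $(X_r^+)^*=X_r^-$, $K_{\alpha_r}^*=K_{\alpha_r}$ and anti-multiplicativity, which is exactly the image of $X_r^-$; so $\Delta_{\varepsilon,\eta}^{\mu}$ is a $^*$-homomorphism on generators, hence everywhere. This completes the verification, and the universal property of $U_q(\g;\varepsilon,\eta)$ then delivers existence and uniqueness.
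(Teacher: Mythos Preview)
Your proof is correct and follows essentially the same approach as the paper's own proof, which is very terse: it appeals to Proposition~\ref{PropEasy} and the classical case $\varepsilon=\eta=+$ to handle $(\mathrm{K})^q$, $(\mathrm{T})^q$, $(\mathrm{S})^q$, and then says the compatibility with $(\mathrm{C})^q_{\varepsilon,\eta}$ is ``verif\/ied directly''. You have simply written out that direct verif\/ication in full, including the telescoping cancellation of the $\mu_r$-terms and the $^*$-compatibility, which the paper leaves implicit.
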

\begin{proof}
From Proposition~\ref{PropEasy} and the case $\varepsilon=\eta =+$, we know that
$\Delta_{\varepsilon,\eta}^{\mu}$ respects the relations $({\rm K})^q$, $({\rm T})^q$ and $({\rm S})^q$.
The compatibility with the relations $({\rm C})^q_{\varepsilon,\eta}$ is verified directly.
\end{proof}

\begin{Lemma}
The collection $\big(\{U_q(\g;\varepsilon,\eta)\},\{\Delta_{\varepsilon,\eta}^{\mu}\}\big)$ forms a~connected
cogroupoid over the index set $\Char_{\R}(Q^+)\cong \R^l$ $($cf.\ Appendix~{\rm \ref{SecCog})}.
\end{Lemma}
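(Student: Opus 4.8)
The plan is to verify the axioms of a cogroupoid (as recalled in Appendix~\ref{SecCog}) one by one, in each case reducing to the already-established case $\varepsilon=\eta=+$ of the ordinary quantized enveloping algebra $U_q(\g)$ by invoking Proposition~\ref{PropEasy}. A cogroupoid over the index set $\Char_\R(Q^+)$ consists of the algebras $U_q(\g;\varepsilon,\eta)$ together with the $^*$-homomorphisms $\Delta_{\varepsilon,\eta}^\mu$, a family of counits $\epsilon_{\varepsilon}\colon U_q(\g;\varepsilon,\varepsilon)\to\C$, and antipode-like maps $S_{\varepsilon,\eta}\colon U_q(\g;\varepsilon,\eta)\to U_q(\g;\eta,\varepsilon)$, subject to the evident coassociativity, counitality, and antipode identities. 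The map $\Delta_{\varepsilon,\eta}^\mu$ is supplied by the Proposition just proved; the remaining data I would construct directly on generators.

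First I would check \emph{coassociativity}: for $\nu,\mu\in\Char_\R(Q^+)$ one must show $(\Delta_{\varepsilon,\nu}^{\mu}\otimes\id)\circ\Delta_{\varepsilon,\eta}^\nu = (\id\otimes\Delta_{\nu,\eta}^{\mu})\circ\Delta_{\varepsilon,\nu}^\nu$ as maps $U_q(\g;\varepsilon,\eta)\to U_q(\g;\varepsilon,\mu)\otimes U_q(\g;\mu,\nu)\otimes U_q(\g;\nu,\eta)$. Since both composites are unital $^*$-homomorphisms out of $U_q(\g;\varepsilon,\eta)$, it suffices to compare them on the generators $K_\omega$ and $X_r^\pm$; on $K_\omega$ both give $K_\omega\otimes K_\omega\otimes K_\omega$, and on $X_r^\pm$ both give the standard three-term expression $X_r^\pm\otimes K_{\alpha_r}\otimes K_{\alpha_r} + K_{\alpha_r}^{-1}\otimes X_r^\pm\otimes K_{\alpha_r} + K_{\alpha_r}^{-1}\otimes K_{\alpha_r}^{-1}\otimes X_r^\pm$, exactly as in the coassociativity of the standard coproduct on $U_q(\g)$. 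Next, the counit: $\epsilon_{\varepsilon}$ is the unital $^*$-homomorphism $U_q(\g;\varepsilon,\varepsilon)\to\C$ determined by $K_\omega\mapsto 1$, $X_r^\pm\mapsto 0$; here one must check it respects $(\mathrm{C})^q_{\varepsilon,\varepsilon}$, which requires $\varepsilon_r - \varepsilon_r = 0$, so the diagonal choice $\eta=\varepsilon$ is exactly what makes the counit exist. The counit identities $(\epsilon_\varepsilon\otimes\id)\circ\Delta_{\varepsilon,\eta}^\varepsilon = \id = (\id\otimes\epsilon_\eta)\circ\Delta_{\varepsilon,\eta}^\eta$ are again verified on generators.

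For the \emph{antipode} maps $S_{\varepsilon,\eta}\colon U_q(\g;\varepsilon,\eta)\to U_q(\g;\eta,\varepsilon)$ I would set $K_\omega\mapsto K_\omega^{-1}$ and $X_r^\pm\mapsto -q_r^{\mp 1}K_{\alpha_r}^{\pm 1}X_r^\pm K_{\alpha_r}^{\mp 1}$ (the standard formula, unchanged), and then check the defining relations: $(\mathrm{K})^q$, $(\mathrm{T})^q$, $(\mathrm{S})^q$ are $\varepsilon$-independent and handled as for $U_q(\g)$ by Proposition~\ref{PropEasy}, while under $S$ the relation $(\mathrm{C})^q_{\varepsilon,\eta}$ must map to $(\mathrm{C})^q_{\eta,\varepsilon}$ with the roles of $\varepsilon_r$ and $\eta_r$ swapped — this is the one computation where the two parameters genuinely interact, and it is the place where one sees why the target must be $U_q(\g;\eta,\varepsilon)$ rather than $U_q(\g;\varepsilon,\eta)$. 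One then verifies the antipode axiom $m(S_{\varepsilon,\eta}\otimes\id)\Delta_{\varepsilon,\eta}^\eta = \epsilon_\eta(\cdot)1$ and its mirror, again on generators. Finally, \emph{connectedness} of the cogroupoid means every $U_q(\g;\varepsilon,\eta)$ is nonzero, which is precisely the content of the triangular decomposition in Proposition~\ref{PropEasy}(3), and that the index set is path-connected, which is automatic since $\Char_\R(Q^+)\cong\R^l$ and any two objects are linked by, e.g., $\Delta_{\varepsilon,\eta}^\mu$ for any intermediate $\mu$.

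I expect the main (and only nonroutine) obstacle to be the compatibility of the antipode with $(\mathrm{C})^q_{\varepsilon,\eta}$: one must compute $[S_{\varepsilon,\eta}(X_r^+),S_{\varepsilon,\eta}(X_s^-)]$ in $U_q(\g;\eta,\varepsilon)$ and see that, after using $(\mathrm{T})^q$ to move the $K$'s around, the constants $\varepsilon_r$ and $\eta_r$ reappear in exchanged positions, producing exactly $\delta_{rs}\frac{\eta_r K_{\alpha_r}^2 - \varepsilon_r K_{\alpha_r}^{-2}}{q_r - q_r^{-1}}$ up to the conjugations built into the antipode formula. This is a short bracket computation, but it is the structural heart of the statement; everything else is a mechanical transplant of the corresponding verification for $U_q(\g)$, justified term-by-term by Proposition~\ref{PropEasy}, together with the observation that all the structure maps are $^*$-homomorphisms (compatibility with $^*$ being immediate from $(X_r^+)^*=X_r^-$ and $K_\omega^*=K_\omega$, which the chosen formulas manifestly preserve).
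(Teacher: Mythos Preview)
Your approach is exactly the paper's: check coassociativity on generators, define $\epsilon_\varepsilon$ and $S_{\varepsilon,\eta}$ on generators, and verify the counit and antipode identities there; the paper is simply terser. Three small slips are worth flagging. First, the indices in your coassociativity identity are garbled: the right-hand side should be $(\id\otimes\Delta_{\mu,\eta}^{\nu})\circ\Delta_{\varepsilon,\eta}^{\mu}$, so that both composites land in $U_q(\g;\varepsilon,\mu)\otimes U_q(\g;\mu,\nu)\otimes U_q(\g;\nu,\eta)$. Second, your antipode formula $S_{\varepsilon,\eta}(X_r^\pm)=-q_r^{\mp 1}K_{\alpha_r}^{\pm 1}X_r^\pm K_{\alpha_r}^{\mp 1}$ simplifies via $(\mathrm{T})^q$ to $-X_r^\pm$, which does \emph{not} satisfy the antipode axiom for the coproduct $\Delta(X_r^\pm)=X_r^\pm\otimes K_{\alpha_r}+K_{\alpha_r}^{-1}\otimes X_r^\pm$; the correct value (as the paper records) is $S_{\varepsilon,\eta}(X_r^\pm)=-q_r^{\pm 1}X_r^\pm$, i.e.\ drop your extra $q_r^{\mp 1}$. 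Third, ``connected'' in the sense of Appendix~\ref{SecCog} only means that each $U_q(\g;\varepsilon,\eta)$ is nonzero, which is indeed Proposition~\ref{PropEasy}(3); the topological connectedness of $\R^l$ plays no role.
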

\begin{proof}
One immediately checks the coassociativity condition on the generators.
One further can define uniquely a~unital $^*$-homomorphism
\begin{gather*}
\epsilon_{\varepsilon}\colon  \ U_q(\g;\varepsilon,\varepsilon)\rightarrow\C,
\qquad
\begin{cases}
X_r^{\pm} \mapsto0,
\\
K_{\omega} \mapsto1
\end{cases}
\end{gather*}
and unital anti-homomorphism
\begin{gather*}
S_{\varepsilon,\eta}\colon  \ U_q(\g;\varepsilon,\eta)\rightarrow U_q(\g;\eta,\varepsilon),
\qquad
\begin{cases}
X_r^{\pm} \mapsto-q^{\pm1}X_r^{\pm},
\\
K_{\omega} \mapsto K_{\omega}^{-1}.
\end{cases}
\end{gather*}
Again, one verifies on generators that these maps satisfy the counit and antipode condition on
generators, hence on all elements.
\end{proof}

We will denote by $\lhd$ the associated adjoint action of $U_q(\g;\eta,\eta)$ on
$U_q(\g;\varepsilon,\eta)$, cf.\
Appendix~\ref{SecCog}, Definition~\ref{DefRA}.

\section{Verma module theory}\label{section2}

We keep the notation of the previous section.
\begin{Definition}
For $\lambda\in \Char_{\C}(P)$, we denote by $\C_\lambda$ the one-dimensional left $U_q(\bb^+)$-module
associated to the character
\begin{gather*}
\chi_{\lambda}\colon \ U_q\big(\bb^+\big)\rightarrow\C,
\qquad
\begin{cases}
X_r^+ \mapsto 0,
\\
K_{\omega} \mapsto \lambda_{\omega}.
\end{cases}
\end{gather*}
We denote
\begin{gather*}
M_{\lambda}^{\varepsilon,\eta}=U_q(\g;\varepsilon,\eta)\underset{U_q(\bb^+)}{\otimes}\mathbb{C}_{\lambda}.
\end{gather*}
We denote by $V_{\lambda}^{\varepsilon,\eta}$ the simple quotient of $M_{\lambda}^{\varepsilon,\eta}$ by
its maximal non-trivial submodule.
Note that such a~maximal submodule exists by the triangular decomposition~\eqref{EqTriang}, as then any
non-trivial submodule is a~sum of weight spaces with weights distinct from $\lambda$.
We denote by $v_{\lambda}^{\varepsilon,\eta}$ the highest weight vector $1\otimes 1$ in either
$M_{\lambda}^{\varepsilon,\eta}$ or $V_{\lambda}^{\varepsilon,\eta}$.
\end{Definition}

For $\lambda\in \Char_{\R}(P)$, we can introduce on $V_{\lambda}^{\varepsilon,\eta}$ a~non-degenerate
Hermitian form $\langle\;,\,\rangle$ such that $\langle xv,w\rangle = \langle v,x^*w\rangle$ for all $x\in
U_q(\g;\varepsilon,\eta)$ and $v,w\in V_{\lambda}^{\varepsilon,\eta}$.
We will call a~form satisfying this property \emph{invariant}.
Such a~form is then unique up to a~scalar.
The construction of this form is the same as in the case
of $U_q(\g)$~\cite[Section~5]{Jos1},~\cite[Section 2.1.5]{Vak1}.
Namely, let $U_q(\n^{\pm})_+\subseteq U_q(\n^{\pm})$ be the kernel of the restriction of the counit on
$U_q(\bb^{\pm})$, and consider the orthogonal decomposition
\begin{gather}
\label{EqTau}
U_q(\g;\varepsilon,\eta)=U(\h)\oplus\big(U_q\big(\n^-\big)_+U_q(\g;\varepsilon,\eta)+U_q(\g;\varepsilon,\eta)U_q(\n^+)_+\big).
\end{gather}
Let $\tau$ denote the projection onto the first summand.
Then one first observes that one has a~Hermitian form on $M_{\lambda}^{\varepsilon,\eta}$ by defining
\begin{gather*}
\langle xv_{\lambda}^{\varepsilon,\eta},yv_{\lambda}^{\varepsilon,\eta}\rangle=\chi_{\lambda}(\tau(x^*y)),
\qquad
x,y\in U_q(\g).
\end{gather*}
This is clearly a~well-defined invariant form.
It necessarily descends to a~non-degenerate Hermitian form on $V_{\lambda}^{\varepsilon,\eta}$.

The goal is to find necessary and sufficient conditions for this Hermitian form to be
positive-definite, in which case the module is called \emph{unitarizable}.
This is in general a~hard problem.
In the following, we present some partial results, restricting to the case $\eta = +$.
We always assume that $\lambda$ is real-valued, unless otherwise mentioned, and that
$M_{\lambda}^{\varepsilon,\eta}$ and $V_{\lambda}^{\varepsilon,\eta}$ have been equipped with the above
canonical Hermitian form.

We first consider the case $\varepsilon = \eta = +$, for which the following result is well-known.
\begin{Lemma}
\label{LemPosComp}
Let $\lambda\in \Char_{\R^+}(P)$.
Then the following are equivalent.
\begin{enumerate}\itemsep=0pt
\item[$1.$]
The module $V_{\lambda}^{+,+}$ is unitarizable.
\item[$2.$]
For all $r\in I$, $\lambda_{\alpha_r}^4 \in q_r^{2\N}$.
\item[$3.$]
 $V_{\lambda}^{+,+}$ is finite-di\-men\-sio\-nal.
\end{enumerate}
\end{Lemma}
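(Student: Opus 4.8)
The plan is to reduce the statement to the classical theory of finite-dimensional and unitarizable highest weight modules over $U_q(\g)$, since in the case $\varepsilon=\eta=+$ the algebra is (a mild variant of) the ordinary simply-connected quantized enveloping algebra. First I would establish the implication $(3)\Rightarrow(2)$: if $V_\lambda^{+,+}$ is finite-dimensional, then in particular the $\ssl_2$-triple generated by $X_r^+$, $X_r^-$, $K_{\alpha_r}$ acts on the highest weight vector $v_\lambda^{+,+}$ in a finite-dimensional way, which by the standard $q$-$\ssl_2$ representation theory forces $\lambda_{\alpha_r}=\pm q_r^{n_r}$ for some $n_r\in\N$, hence $\lambda_{\alpha_r}^4=q_r^{2(2n_r)}\in q_r^{2\N}$. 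For $(2)\Rightarrow(3)$, given $\lambda_{\alpha_r}^4\in q_r^{2\N}$ and $\lambda\in\Char_{\R^+}(P)$, one has $\lambda_{\alpha_r}=q_r^{m_r}$ with $m_r\in\frac12\N$; combined with positivity $\lambda$ must lie in the image of $q^{\cdot}$ applied to a dominant integral weight (up to the usual sign subtleties, which the $\R^+$-hypothesis kills), and then $V_\lambda^{+,+}$ is the type-$1$ irreducible of that highest weight, which is finite-dimensional by Lusztig's/Rosso's theorem.

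Next I would handle the equivalence with $(1)$. The implication $(3)\Rightarrow(1)$ is the statement that finite-dimensional type-$1$ modules over the compact real form $U_q(\g)$ carry an invariant positive-definite inner product: this is standard and follows because $U_q(\g)$ with its $^*$-structure is a compact quantum group algebra, so every finite-dimensional $^*$-representation is unitarizable, and the canonical invariant form on $V_\lambda^{+,+}$ must then coincide (up to scale) with that inner product by the uniqueness of invariant forms noted just before the lemma. For $(1)\Rightarrow(2)$, the key computation is again inside each rank-one $q$-$\ssl_2$ subalgebra: if the canonical form is positive-definite, then on the string $X_r^-v_\lambda, (X_r^-)^2 v_\lambda,\dots$ the Shapovalov-type norms $\langle (X_r^-)^k v_\lambda,(X_r^-)^k v_\lambda\rangle$ are computed via the relation $({\rm C})^q_{+,+}$, namely $[X_r^+,X_r^-]=\frac{K_{\alpha_r}^2-K_{\alpha_r}^{-2}}{q_r-q_r^{-1}}$, giving norms proportional to $\prod_{j}[\,\cdot\,]_r$-factors that change sign unless the string terminates, i.e. unless $\lambda_{\alpha_r}^2\in\pm q_r^{\N}$, and positivity rules out the minus sign, yielding $(2)$.

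The main obstacle I expect is bookkeeping around the square roots and signs: because the $^*$-algebra here uses $P$ rather than $\frac12 P$ and the character $\lambda$ lives in $\Char_{\R^+}(P)$ rather than being literally $q^\nu$ for an integral weight $\nu$, one must be careful translating ``$\lambda_{\alpha_r}^4\in q_r^{2\N}$'' into a statement about dominant integral highest weights, and in checking that the positivity hypothesis in $(1)$ forces the correct sign in each rank-one computation (rather than merely $\lambda_{\alpha_r}^4\in q_r^{2\Z}$). Once the rank-one analysis is pinned down, the rest is an assembly of known facts about $U_q(\g)$, with the triangular decomposition~\eqref{EqTriang} and the uniqueness of the invariant form doing the structural work; I would simply cite~\cite[Section~5]{Jos1} and the standard references for the finite-dimensional representation theory rather than reprove them.
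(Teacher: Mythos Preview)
Your plan matches the paper's proof: the implications $(2)\Rightarrow(3)\Rightarrow(1)$ are handled by citation (the paper invokes \cite[Corollary~10.1.15, Proposition~10.1.21]{Cha1}), and $(1)\Rightarrow(2)$ is exactly the rank-one Shapovalov computation you describe, showing that the norms $\langle (X_r^-)^k v_\lambda,(X_r^-)^k v_\lambda\rangle$ stay nonnegative for all $k$ only if $\lambda_{\alpha_r}^4\in q_r^{2\N}$. One small fix on the bookkeeping you yourself flagged: in this presentation it is $K_{\alpha_r}^2$, not $K_{\alpha_r}$, that plays the role of the standard generator $K$ in $U_{q_r}(\ssl_2)$ (since $K_{\alpha_r}X_r^{\pm}K_{\alpha_r}^{-1}=q_r^{\pm1}X_r^{\pm}$ and $[X_r^+,X_r^-]=(K_{\alpha_r}^2-K_{\alpha_r}^{-2})/(q_r-q_r^{-1})$), so finite-dimensionality gives $\lambda_{\alpha_r}^2\in q_r^{\N}$ and hence exactly $\lambda_{\alpha_r}^4\in q_r^{2\N}$ rather than only the even powers.
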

\begin{proof}
For the implications $(2)\Rightarrow(3)\Rightarrow(1)$, see e.g.\
\cite[Corollary~10.1.15, Proposition~10.1.21]{Cha1}.
Assume now that $V_{\lambda}^{+,+}$ is unitarizable.
Then $V_{\lambda_r}^{+,+}$ is a~unitarizable representation of $U_{q_r}(\su(2))$.
By a~simple computation using the commutation between the $X_r^{\pm}$ (cf.~\cite{DeC6}), we have that
\begin{gather*}
\langle(X_r^+)^k(X_r^-)^kv_{\lambda_r}^{+,+},v_{\lambda_r}^{+,+}\rangle=\prod_{l=1}^k\frac{\big(q_r^l-q_r^{-l}\big)
\big(q_r^{-l+1}\lambda_{\alpha_r}^2-q_r^{l-1}\lambda_{\alpha_r}^{-2}\big)}{\big(q_r-q_r^{-1}\big)^2}.
\end{gather*}
Hence, by unitarity and the fact that $0<q_r<1$, we find that $\prod\limits_{l=1}^k
\big(q_r^{l-1}\lambda_{\alpha_r}^{-2}-q_r^{-l+1}\lambda_{\alpha_r}^2\big)\geq 0$ for all $k\geq0$.
This is only possible if eventually one of the factors becomes zero, i.e.\
when $\lambda_{\alpha_r}^4\in q_r^{2\N}$.
\end{proof}

By a~limit argument, we now extend this result to the case $\varepsilon \in \Char_{\{0,1\}}(Q^+)$.
\begin{Proposition}
\label{PropPosDef0}
Suppose $\varepsilon \in \Char_{\{0,1\}}(Q^+)$, and let $\lambda\in \Char_{\R^+}(P)$.
Then $V_{\lambda}^{\varepsilon,+}$ is unitari\-zab\-le if and only if $\lambda_{\alpha_r}^4 \in q_r^{2\N}$ for
all $r\in I$ with $\varepsilon_{r}=1$.

\end{Proposition}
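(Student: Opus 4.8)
The plan is to reduce the general $\varepsilon\in\Char_{\{0,1\}}(Q^+)$ case to the two extreme cases $\varepsilon=+$ (handled by Lemma~\ref{LemPosComp}) and $\varepsilon=0$ by a continuity/limit argument in the deformation parameter. First I would note that, by the isomorphism~\eqref{Iso} together with the triangular decomposition~\eqref{EqTriang}, the invariant Hermitian form on $M_{\lambda}^{\varepsilon,+}$ depends on $\varepsilon$ only through the scalars $\varepsilon_r$ entering the commutation relation $(\mathrm{C})^q_{\varepsilon,+}$; concretely, identifying all the $M_{\lambda}^{\varepsilon,+}$ with $M_{\lambda}^{+,+}$ as vector spaces via $i_{\varepsilon,+}$ (so that the underlying space is $U_q(\n^-)v_\lambda$, a space independent of $\varepsilon$), the matrix entries $\langle y v_\lambda^{\varepsilon,+}, z v_\lambda^{\varepsilon,+}\rangle = \chi_\lambda(\tau(y^*z))$ for fixed weight-homogeneous $y,z\in U_q(\n^-)$ are polynomials in the $\varepsilon_r$ with coefficients depending on $q$ and $\lambda$. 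This is exactly the kind of computation indicated by the formula $m_{\varepsilon,\eta}(x\otimes y)=\omega_\varepsilon(x_{(1)},y_{(1)})x_{(2)}y_{(2)}\omega_\eta(x_{(3)},y_{(3)})$, and it suffices here to know that the form is \emph{continuous} in $\varepsilon$ on each weight space.

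\textbf{Key steps.}
For the "only if" direction, suppose $V_{\lambda}^{\varepsilon,+}$ is unitarizable and fix $r$ with $\varepsilon_r=1$. Restricting to the $U_{q_r}(\su(2))$-type subalgebra generated by $X_r^\pm, K_{\alpha_r}$, and repeating the computation in Lemma~\ref{LemPosComp} verbatim (the only change is that $K_{\alpha_r}^2$ now appears multiplied by $\varepsilon_r=1$, exactly as before), we get
\begin{gather*}
\langle(X_r^+)^k(X_r^-)^kv_{\lambda}^{\varepsilon,+},v_{\lambda}^{\varepsilon,+}\rangle=\prod_{l=1}^k\frac{\big(q_r^l-q_r^{-l}\big)\big(q_r^{-l+1}\lambda_{\alpha_r}^2-q_r^{l-1}\lambda_{\alpha_r}^{-2}\big)}{\big(q_r-q_r^{-1}\big)^2},
\end{gather*}
which forces $\lambda_{\alpha_r}^4\in q_r^{2\N}$ exactly as in the proof of Lemma~\ref{LemPosComp}. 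For the "if" direction, assume $\lambda_{\alpha_r}^4\in q_r^{2\N}$ for all $r$ with $\varepsilon_r=1$. Write $J=\{r : \varepsilon_r=1\}$ and consider the path $t\mapsto\varepsilon^{(t)}$, $t\in(0,1]$, with $\varepsilon^{(t)}_r=t$ for $r\in J$ and $\varepsilon^{(t)}_r=0$ otherwise. For $t=1$ this is $\varepsilon$; as $t\to 0^+$ it tends to the character $\varepsilon^{(0)}$ with $\varepsilon^{(0)}_r=0$ for all $r$, i.e.\ $\varepsilon^{(0)}=0$. By the rescaling isomorphism~\eqref{Iso} applied with real positive $a,b$ (valid since all nonzero $\varepsilon^{(t)}_r=t>0$ have positive sign), the module $V_{\lambda}^{\varepsilon^{(t)},+}$ is $^*$-isomorphic to $V_{\lambda'}^{\varepsilon,+}$ for a rescaled weight $\lambda'$ with $(\lambda'_{\alpha_r})^4 = t^{-1}\lambda_{\alpha_r}^4$ for $r\in J$; hence the hypothesis $\lambda_{\alpha_r}^4\in q_r^{2\N}$ is \emph{not} generally preserved along the path, so a direct reduction fails and one must argue by semicontinuity instead.

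\textbf{The semicontinuity argument and the main obstacle.}
Fix a weight $\mu$ of $M_{\lambda}^{\varepsilon,+}$ and let $G_\mu(\varepsilon)$ be the Gram matrix of the invariant form on the (finite-dimensional) weight space $(M_{\lambda}^{\varepsilon,+})_\mu$ in a fixed PBW basis; its entries are polynomial in the $\varepsilon_r$, hence continuous. Unitarizability of $V_{\lambda}^{\varepsilon,+}$ is equivalent to positive \emph{semi}definiteness of $G_\mu(\varepsilon)$ for every $\mu$ (the radical being exactly the maximal submodule). By Lemma~\ref{LemPosComp} applied to each simple component and the known structure of $V_\lambda^{0,+}$ — note $U_q(\g;0,+)$ is the algebra of \cite[Section~3]{Kas1}, \cite{Jos2}, where $V_\lambda^{0,+}$ is the "classical-limit" type module and is always unitarizable for $\lambda\in\Char_{\R^+}(P)$ — we know $G_\mu(\varepsilon^{(0)})\geq 0$. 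The heart of the argument is to promote this to $G_\mu(\varepsilon)\geq 0$: since $\varepsilon_r\in\{0,1\}$, I would argue that turning on $\varepsilon_r$ from $0$ to $1$ only \emph{adds} a positive contribution. More precisely, using the explicit root-vector commutators (as in \cite{DeC6}), the diagonal blocks of $G_\mu(\varepsilon)$ in an appropriate filtration are sums, over the $r\in J$ that "act nontrivially at level $\mu$", of the rank-one-type factors $\prod_l(q_r^{l-1}\lambda_{\alpha_r}^{-2}-q_r^{-l+1}\lambda_{\alpha_r}^2)$, each of which is $\geq 0$ precisely under the hypothesis $\lambda_{\alpha_r}^4\in q_r^{2\N}$; the off-diagonal mixing between different simple roots is the $\varepsilon$-independent part already controlled at $\varepsilon^{(0)}$. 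I expect the main obstacle to be exactly this bookkeeping: showing that the $\varepsilon$-dependent corrections to the Gram matrix are themselves positive semidefinite under the hypothesis, uniformly in $\mu$, rather than merely that each "diagonal" $U_{q_r}(\su(2))$-factor is. This likely requires either an inductive argument on $\mathrm{ht}(\lambda-\mu)$ using the Shapovalov-type recursion for the form under the relations $(\mathrm{C})^q_{\varepsilon,+}$, or a direct identification of $V_\lambda^{\varepsilon,+}$ (for $\varepsilon$ a $0/1$ character and $\lambda$ satisfying the condition) with a tensor-product-like module built from the $\varepsilon_r=1$ finite-dimensional pieces and the $\varepsilon_r=0$ pieces, whose unitarity is then manifest.
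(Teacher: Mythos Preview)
Your ``only if'' direction and the continuity setup are both fine and match the paper. The gap is in the ``if'' direction: the limit is taken in the wrong direction, and the endpoint you rely on is itself an unproved special case of the proposition.

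Concretely, you try to deform from $\varepsilon^{(0)}=0$ towards the target $\varepsilon$, hoping positivity propagates outward. But positive semidefiniteness of the Gram matrices $G_\mu$ is a \emph{closed} condition, not an open one: it passes to limits, not away from them. Knowing $G_\mu(0)\geq 0$ tells you nothing about $G_\mu(\varepsilon)$ for $\varepsilon$ nearby. You recognise this (``the main obstacle'') and propose to show the $\varepsilon$-dependent corrections are themselves PSD, but that is a hard Shapovalov-type computation and your sketch does not carry it out. Moreover, your claimed base case---that $V_\lambda^{0,+}$ is unitarizable for every $\lambda\in\Char_{\R^+}(P)$---is not established anywhere prior to this proposition; it is exactly the $S=\varnothing$ instance of what you are trying to prove, so invoking it is circular.

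The paper's argument runs the limit the other way, and this is the key idea you are missing. Keep $\varepsilon_r=1$ fixed for $r\in S$ (where the hypothesis $\lambda_{\alpha_r}^4\in q_r^{2\N}$ holds), and for $r\notin S$ set $\varepsilon_r=q_r^{2m_r}\lambda_{\alpha_r}^{-4}>0$. Now \emph{all} $\varepsilon_r>0$, so the rescaling~\eqref{Iso} is a genuine $^*$-isomorphism to $U_q(\g)$, and the rescaled condition $\lambda_{\alpha_r}^4\in q_r^{2\N}\varepsilon_r^{-1}$ holds for every $r$ by construction. Hence Lemma~\ref{LemPosComp} gives positive semidefiniteness of the form for each such $\varepsilon$. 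Sending $m_r\to\infty$ drives $\varepsilon_r\to 0$ for $r\notin S$, and positive semidefiniteness survives the limit by continuity of $G_\mu$ on each finite-dimensional weight space. No separate analysis of the $\varepsilon=0$ case, and no Shapovalov recursion, is needed.
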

\begin{proof}
The `only if' part of the proposition is obvious, since the $K_{\alpha_r},X_r^{\pm}$ with $\varepsilon_r =
1$ generate a~quantized enveloping algebra $\widetilde{U}_q(\kk_{\sss})$ in its compact (non-simply
connected) form.

To show the opposite direction, consider first general $\varepsilon,\eta \in \Char_{\R}(Q^+)$.
For $\alpha\in Q^+$, denote by $U_q(\n^-)(\alpha)$ the finite-di\-men\-sio\-nal space of elements $X\in
U_q(\n^-)$ with $K_{\omega}XK_{\omega}^{-1} = q^{-\frac{1}{2}(\omega,\alpha)}X$.
Identifying $U_q(\n^-)$ with $M_{\lambda}^{\varepsilon,\eta}$ in the canonical way by means of $x \mapsto
xv_{\lambda}^{\varepsilon,\eta}$, we may interpret the Hermitian forms on the
$M_{\lambda}^{\varepsilon,\eta}$ as a~family of Hermitian forms
$\langle\,\cdot,\cdot\,\rangle^{\varepsilon,\eta}_{\lambda}$ on $U_q(\n^-)$.
It is easily seen that these vary continuously with $\varepsilon,\eta$ and $\lambda$ on each
$U_q(\n^-)(\alpha)$.
Assume now that $\eta = +$ and $\varepsilon_r>0$ for all $r$.
In this case $U_q(\g;\varepsilon,+)$ is isomorphic to $U_q(\g)$ as a~$^*$-algebra by a~rescaling of the
generators by positive numbers, cf.~\eqref{Iso}.
By Lemma~\ref{LemPosComp}, $\langle\,\cdot,\cdot\,\rangle^{+,+}_{\lambda}$ is positive semi-definite if
and only if $V_{\lambda}^{+,+}$ is finite-di\-men\-sio\-nal, and the the latter happens if and only if
$\lambda_{\alpha_r}^{4}\in q_r^{2\N}$ for all $r\in I$.
Hence we get by the above rescaling that $\langle\,\cdot,\cdot\,\rangle^{\varepsilon,+}_{\lambda}$ is
positive semi-definite if and only if $\lambda_{\alpha_r}^4\in q_{r}^{2\N}\varepsilon_{r}^{-1}$ for all
$r$.

Fix now a~subset $S$ of the simple positive roots, and put $\varepsilon_r = 1$ for $r\in S$.
Assume that $\lambda_{\alpha_r}^4\in q_r^{2\mathbb{N}}$ when $r\in S$.
For $r\notin S$ and $m_r\in \mathbb{N}$, define $\varepsilon_r = q_r^{2m_r} \lambda_{\alpha_r}^{-4}$.
From the above, we obtain that $\langle\,\cdot,\cdot\,\rangle^{\varepsilon,+}_{\lambda}$ is positive
semi-definite.
Taking the limit $m_r\rightarrow \infty$ for $r\notin S$, we deduce that
$\langle\,\cdot,\cdot\,\rangle^{\varepsilon,+}_{\lambda}$ is positive semi-definite for $\varepsilon_r=\delta_{r\in S}$.
\end{proof}

The above $V_{\lambda}^{\varepsilon,+}$ with pre-Hilbert space structure can also be presented more
concretely as generalized Verma modules (from which it will be clear that they are not finite dimensional
when $\varepsilon_r=0$ for some $r$).
We will need some preparations, obtained from modifying arguments in~\cite{Jos1}.
Note that to pass from the conventions in~\cite{Jos1} to ours, the $q$ in~\cite{Jos1} has to be replaced by
$q^{1/2}$, and the coproduct in~\cite{Jos1} is also opposite to ours.
However, as~\cite{Jos1} gives preference to the left adjoint action, while we work with the right adjoint
action, most of our formulas will in fact match.

We start with recalling a~basic fact.
\begin{Lemma}
\label{LemFinK}
Let $\omega\in P^+$, $\varepsilon\in \Char_{\R}(Q^+)$, and consider $K_{\omega}^{-4}\in
U_q(\g;\varepsilon,+)$.
Then $K_{\omega}^{-4}\lhd U_q(\g)$ is finite-di\-men\-sio\-nal, where $\lhd$ is the adjoint action $($cf.\
Definition~{\rm \ref{DefRA})}.
\end{Lemma}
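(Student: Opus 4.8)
The plan is to mimic the classical argument that $\Ad$-finiteness of an element of $U_q(\g)$ is governed by its weight, using the triangular decomposition and the explicit coproduct formulas for $\Delta^{\mu}_{\varepsilon,\eta}$ that we already have. First I would make the adjoint action $\lhd$ of $U_q(\g)=U_q(\g;+,+)$ on $U_q(\g;\varepsilon,+)$ explicit on generators: using the cogroupoid counit/antipode from the Lemma above and the definition of $\lhd$ (Appendix~\ref{SecCog}, Definition~\ref{DefRA}), one computes $y\lhd K_{\omega}$, $y\lhd X_r^{\pm}$ for $y\in U_q(\g;\varepsilon,+)$. In particular $K_{\omega}^{-4}\lhd K_{\chi}$ is a scalar multiple of $K_{\omega}^{-4}$ (it is a weight vector for the Cartan, of weight $-2\omega\in -2P^+\subseteq -Q^+$ after identifying weights appropriately), and $K_{\omega}^{-4}\lhd X_r^{\pm}$ lies in $K_{\omega}^{-4}U_q(\bb^{\mp})$ by the triangular decomposition, shifting the $U(\h)$-weight by $\mp\alpha_r$.

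Next I would observe that the weights occurring in $K_{\omega}^{-4}\lhd U_q(\g)$ are bounded: since $U_q(\g)$ is spanned by PBW-type monomials and acting by $X_r^{+}$ lowers the weight while $X_r^{-}$ raises it, iterating the action on the single weight vector $K_{\omega}^{-4}$ of weight $-2\omega$ can only produce weights of the form $-2\omega + \beta$ with $\beta\in Q$ lying in a bounded region — concretely, the raising directions are constrained because $K_{\omega}^{-4}\lhd X_r^{-}$ again lands in a subspace with controlled weight and the relations $({\rm S})^q$, $({\rm C})^q_{\varepsilon,+}$ force the span to stabilize. The cleanest route is to show directly that $K_{\omega}^{-4}\lhd X_r^{-}=0$ or more precisely that for each $r$ there is $N_r$ with $K_{\omega}^{-4}\lhd (X_r^{-})^{N_r}=0$: this is exactly the integrability statement coming from $\omega\in P^+$, i.e. $\langle\du{\alpha}_r,2\omega\rangle\in 2\N$, and it follows from a rank-one $U_{q_r}(\su(2))$ computation analogous to the one displayed in the proof of Lemma~\ref{LemPosComp}. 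Granting finiteness in each simple root direction, a standard argument (the subspace $K_{\omega}^{-4}\lhd U_q(\g)$ is a quotient of a module generated by a vector killed by high enough powers of all $X_r^{-}$, hence of a finite-dimensional integrable-type module) gives finite-dimensionality.

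The main obstacle I anticipate is that $U_q(\g;\varepsilon,+)$ is \emph{not} a Hopf algebra and the relation $({\rm C})^q_{\varepsilon,+}$ is $\varepsilon$-deformed, so the usual proofs that quote Hopf-algebraic identities for the adjoint action do not apply verbatim; one must instead work inside the cogroupoid, carefully tracking which $\Delta^{\mu}_{\varepsilon,\eta}$ appears at each step so that source/target indices match. However, the key point — that $\Delta^{\mu}_{\varepsilon,\eta}$ acts on generators by the \emph{same} formulas as in the $U_q(\g)$ case and differs only through $({\rm C})$, which never enters the computation of $y\lhd X_r^{\pm}$ on a fixed weight vector — means the nilpotency computation reduces to the $\varepsilon=+$ case, where it is classical. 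So the real work is bookkeeping rather than a new idea, and the finiteness then follows as above.
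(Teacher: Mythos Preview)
Your central observation is correct and is precisely the paper's point: since the coproduct $\Delta^{\mu}_{\varepsilon,\eta}$ and the antipode $S_{\varepsilon,\eta}$ are given by $\varepsilon$-independent formulas on generators, the rank-one nilpotency computations $K_\omega^{-4}\lhd (X_r^{\pm})^{N}$ never invoke relation~(C)$^q_{\varepsilon,+}$ and hence reduce verbatim to the $\varepsilon=+$ case. The paper's proof consists of exactly this remark --- it cites \cite[Lemmas~6.1, 6.2, Propositions~6.3, 6.5]{Jos1} and asserts those arguments carry over unchanged. So on the question ``why does $\varepsilon$ cause no trouble'', you and the paper agree.

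Where your sketch has a genuine gap is in the reconstruction of the Joseph--Letzter argument itself. First a small correction: under $\lhd K_\chi$ one has $K_\omega^{-4}\lhd K_\chi = K_\chi^{-1}K_\omega^{-4}K_\chi = K_\omega^{-4}$, so the $\lhd$-weight of $K_\omega^{-4}$ is~$0$, not $-2\omega$; there is no a~priori highest- or lowest-weight bound available. More seriously, the implication ``cyclic, generated by a weight vector $v$ with $v\lhd(X_r^-)^{N_r}=0$ for all $r$, hence a quotient of a finite-dimensional integrable module'' is false: that hypothesis only exhibits the module as a quotient of a lowest-weight Verma-type object, which is infinite-dimensional. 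Even knowing nilpotency of \emph{both} $X_r^+$ and $X_r^-$ on the single cyclic vector does not bound the weights of the full orbit $K_\omega^{-4}\lhd U_q(\g)$, nor does it give local finiteness of each $U_{q_r}(\su(2))$ on arbitrary elements of that orbit. The Joseph--Letzter proof does substantially more: one shows by an explicit PBW/weight computation that already $K_\omega^{-4}\lhd U_q(\n^+)$ lands in a specific finite-dimensional subspace (this is the content of their Lemmas~6.1--6.2), and then controls the remaining $U_q(\n^-)$-contribution via the triangular decomposition and further structural arguments (their Proposition~6.3). That passage --- from nilpotency on the generator to finiteness of the whole orbit --- is where the real work lies, and your ``standard argument'' does not supply it.
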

\begin{proof}
One checks that the arguments of~\cite[Lemma 6.1, Lemma 6.2, Proposition 6.3, Proposition 6.5]{Jos1} are
still valid in our setting.
\end{proof}

Let $\varepsilon\in \Char_{\R}(Q^+)$.
Recall that the map $\tau$ denoted the projection onto the first summand in~\eqref{EqTau}.
Let $\tau_{Z,\varepsilon}$ be the restriction of $\tau$ to the center $\ZZ(U_q(\g;\varepsilon,+))$ of
$U_q(\g;\varepsilon,+)$.
This is an $\varepsilon$-modified Harish-Chandra map.
The usual reasoning shows that this is a~homomorphism into~$U(\h)$.

\begin{Lemma}
\label{LemHarish}
The map $\tau_{Z,\varepsilon}$ is a~bijection between $\ZZ(U_q(\g;\varepsilon,+))$ and the linear span of the set
$\left\{\sum\limits_{w\in W} \varepsilon_{\omega-w\omega}q^{(-2w\omega,\rho)}K_{-4w\omega}\,\Big|\,\omega \in P^+\right\}$,
where $\rho = \sum_r \omega_r$ is the sum of the fundamental weights and $W$ denotes the
Weyl group of $\Delta$ with its natural action on $\h^*$.

\end{Lemma}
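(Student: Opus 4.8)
The plan is to transcribe the standard derivation of the quantum Harish--Chandra isomorphism (\cite[Sections~6--7]{Jos1}) to the present setting; the single new ingredient is to keep track of the character $\varepsilon$ through the relation $({\rm C})^q_{\varepsilon,+}$, $[X_r^+,X_r^-]=\frac{\varepsilon_rK_{\alpha_r}^2-K_{\alpha_r}^{-2}}{q_r-q_r^{-1}}$. Throughout I identify $U(\h)$ with the group algebra $\C[P]$ on $\{K_\beta\mid\beta\in P\}$, and I write $m_\mu:=\sum_{w\in W}\varepsilon_{\mu-w\mu}q^{(-2w\mu,\rho)}K_{-4w\mu}$ for $\mu\in P^+$ (note $\mu-w\mu\in Q^+$, so the coefficients are defined); the asserted target of $\tau_{Z,\varepsilon}$ is the linear span of the $m_\mu$.

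First I would check injectivity of $\tau_{Z,\varepsilon}$. A central element $z$ commutes with all $K_\omega$, so the triangular decomposition~\eqref{EqTriang} places $z$ in the weight-zero subspace $\bigoplus_{\alpha\in Q^+}U_q(\n^-)(-\alpha)\,U(\h)\,U_q(\n^+)(\alpha)$, of which $\tau_{Z,\varepsilon}(z)$ is exactly the $\alpha=0$ component; a leading-term argument --- take $\alpha$ maximal among those occurring, commute $z$ past a suitable $X_r^{\pm}$, and use that $U_q(\n^{\pm})$ is a domain by Proposition~\ref{PropEasy} --- then forces $z=0$ whenever $\tau_{Z,\varepsilon}(z)=0$, independently of $\varepsilon$. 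Next I would locate the centre inside the $\lhd$-locally finite part: one checks, by the usual Hopf-algebraic computation transcribed to the cogroupoid, that $\ZZ(U_q(\g;\varepsilon,+))$ is $\lhd$-invariant, hence $\lhd$-locally finite, and by Lemma~\ref{LemFinK} the relevant pieces sit inside the finite-dimensional $U_q(\g)$-modules $K_{-4\omega}\lhd U_q(\g)$, $\omega\in P^+$. Transcribing the analysis of \cite[Sections~6--7]{Jos1}, these modules are semisimple, their sum in $U_q(\g;\varepsilon,+)$ is direct, each carries the trivial $U_q(\g)$-module exactly once, and the corresponding $\lhd$-invariant (hence central) vectors $z_\omega$ form a basis of $\ZZ(U_q(\g;\varepsilon,+))$ indexed by $P^+$; in particular $\tau_{Z,\varepsilon}$ is injective on a space parametrised by $P^+$, with image spanned by the $\tau_{Z,\varepsilon}(z_\omega)$.

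The heart of the matter is to compute these images, and this is where $\varepsilon$ intervenes. A central element acts on the Verma module $M_\lambda^{\varepsilon,+}$ by the scalar $\chi_\lambda(\tau_{Z,\varepsilon}(z_\omega))$; analysing one simple root at a time when $M_\lambda^{\varepsilon,+}$ carries a singular vector of weight $s_r\cdot\lambda$ --- this is the $U_{q_r}(\su(2))$-string computation from the proof of Lemma~\ref{LemPosComp}, now carrying the factor $\varepsilon_r$ from $({\rm C})^q_{\varepsilon,+}$ --- relates the central character at $\lambda$ to the one at $s_r\cdot\lambda$ up to an $\varepsilon$-scalar, and iterating along a reduced expression for $w\in W$ shows that $\tau_{Z,\varepsilon}(z_\omega)\in\C[P]$ is invariant under a $\rho$-shifted, $\varepsilon$-twisted $W$-action whose orbit sums are exactly the $m_\mu$, $\mu\in P^+$. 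Hence each $\tau_{Z,\varepsilon}(z_\omega)$ lies in the span of the $m_\mu$. Finally, since $P^+$ is a fundamental domain for $W$ on $P$, the $m_\mu$ have pairwise disjoint $K$-supports and so are linearly independent; combining this with the injectivity of $\tau_{Z,\varepsilon}$ and the fact that both $\{z_\omega\}$ and $\{m_\mu\}$ are parametrised by $P^+$ (a dimension count weight space by weight space) yields that the image is precisely the span of the $m_\mu$, which is the assertion.

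The step I expect to demand the most care is the $\varepsilon$-twisted Weyl invariance: one must verify that the monoid-homomorphism property of $\varepsilon\in\Char_\R(Q^+)$ lets the one-step $\varepsilon$-factors attached to the simple reflections compose into a single factor $\varepsilon_{\mu-w\mu}$ that is independent of the reduced expression chosen for $w$, so that the $m_\mu$ are genuinely well defined and genuinely invariant under the twisted $W$-action. The remaining ingredients are careful but essentially routine transcriptions of the corresponding facts for $U_q(\g)$.
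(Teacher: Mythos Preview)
Your approach differs substantially from the paper's. You propose to transcribe the classical Harish--Chandra argument directly: establish an $\varepsilon$-twisted Weyl invariance of the image of $\tau_{Z,\varepsilon}$ via singular vectors in Verma modules, then conclude by a dimension count. The paper instead never computes central characters. For $\varepsilon$ with all $\varepsilon_r\neq 0$ it uses the $\lhd$-equivariant algebra isomorphism $\Psi_\varepsilon\colon U_q(\g;\varepsilon,+)\to U_q(\g)$ (a rescaling of generators, cf.~\eqref{Iso}) to transport the known formula for $\tau_{Z,+}(z_\omega)$ from \cite[Section~8]{Jos1}; the $\varepsilon$-factors enter simply because the rescaling of $U(\h)$ satisfies $b_{\omega-w\nu}^{-4}=\varepsilon_{\omega-w\nu}$, yielding an explicit triangular expression of $\tau_{Z,\varepsilon}(z_\omega)$ in the $m_\nu$. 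For $\varepsilon$ with some $\varepsilon_r=0$ the paper then argues by \emph{continuity}: one can write $z_\omega=K_{-4\omega}\lhd p_\omega$ for a fixed $p_\omega\in U_q(\g)$ independent of $\varepsilon$, and under the vector-space identification $i_{\varepsilon,+}$ the adjoint action varies continuously in $\varepsilon$, so the explicit formula persists in the limit.

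Your singular-vector route has a genuine gap at the degenerate case. When $\varepsilon_r=0$ the relation $[X_r^+,X_r^-]=-(q_r-q_r^{-1})^{-1}K_{\alpha_r}^{-2}$ never allows $(X_r^-)^nv_\lambda$ to become singular for $n\geq 1$, so there is no Verma-module embedding across $s_r$ and hence no constraint in that direction from your method; yet it is precisely for such $r$ that the factor $\varepsilon_{\mu-w\mu}$ kills terms in $m_\mu$, and your argument supplies no substitute for this vanishing. Secondly, your final step is too quick even where it applies: both $\{z_\omega\}$ and $\{m_\mu\}$ are indexed by the infinite set $P^+$, so injectivity plus containment does not yield surjectivity without some triangularity --- which the paper gets from its explicit formula (the $m_\omega$-coefficient is $c_\omega\neq 0$), but which a bare invariance argument does not produce. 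The simplest repair is to adopt the paper's two-step strategy of rescaling followed by a limit.
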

\begin{proof}
Note first that $\omega-w\omega$ is inside $Q^+$ for all $\omega\in P^+$ and $w\in W$, so that
$\varepsilon_{\omega-w\omega}$ is well-defined.

Arguing as in~\cite[Section~8]{Jos1}, we can assign to any $\omega\in P^+$ an element $z_{\omega}$
in $\ZZ(U_q(\g;\varepsilon,+))$, uniquely determined up to a~non-zero scalar, such that
\begin{gather*}
K_{\omega}^{-4}\lhd U_q(\g)=\mathbb{C}z_{\omega}+\left(K_{\omega}^{-4}\lhd U_q(\g)_+\right),
\end{gather*}
where $U_q(\g)_+$ is the kernel of the counit and where $\lhd$ denotes the adjoint action.
More concretely, as $K_{\omega}^{-4} \lhd U_q(\g)$ is a~finite-di\-men\-sio\-nal right $U_q(\g)$-module, it is
semi-simple, and we have a~projection $E$ of $K_{\omega}^{-4} \lhd U_q(\g)$ onto the space of its invariant
elements.
We can then take $z_{\omega} = E\big(K_{\omega}^{-4}\big)$, and $z_{\omega}\in \ZZ(U_q(\g;\varepsilon,+))$ by
Lemma~\ref{LemCenter}.

Suppose now first that $\varepsilon_r\neq 0$ for all $r$, and choose $b\in \Char_{\C}(P)$ such that
$b_{\alpha_r}^4 = \varepsilon_r^{-1}$.
Consider
\begin{gather*}
\Psi_{\varepsilon}\colon \ U_q(\g;\varepsilon,+)\rightarrow U_q(\g),
\qquad
\begin{cases}
X_r^{\pm} \mapsto  b_{\alpha_r}^{-1}X_r^{\pm},
\\
K_{\beta} \mapsto b_{\beta}K_{\beta},
\qquad
\beta\in P.
\end{cases}
\end{gather*}
This is a~unital $\lhd$-equivariant algebra isomorphism, and $\tau_{Z,\varepsilon} =
\Psi_{\varepsilon}^{-1}\circ \tau_{Z,+}\circ \Psi_{\varepsilon}$.
Hence by $\lhd$-equivariance, we find from the computations in~\cite[Section~8]{Jos1} that, for
a~non-zero $\varepsilon$-independent scalar $c_{\omega}$,
\begin{gather*}
\tau_{Z,\varepsilon}(z_{\omega})=c_{\omega}\sum_{\nu\in P^+}\dim((V_{\omega})_{\nu})\left(\sum_{w\in W}
b_{\omega-w\nu}^{-4}q^{-2(w\nu,\rho)}K_{-4w\nu}\right),
\end{gather*}
where $(V_{\omega})_{\nu}$ denotes the weight space at $q^{\frac{1}{2}\nu}$ (i.e.\
the space of vectors on which the $K_{\omega}$ act as~$q^{\frac{1}{2}(\nu,\omega)}$) of the
finite-di\-men\-sio\-nal $U_q(\g)$-module $V_{\omega}$ with highest weight $q^{\frac{1}{2}\omega}$.
But clearly we then only have to sum over those $\nu$ with $\omega-\nu \in Q^+$, so that
$b_{\omega-w\nu}^{-4} = \varepsilon_{\omega-w\nu}$, and we can write
\begin{gather}
\label{EqCharMod}
\tau_{Z,\varepsilon}(z_{\omega})=c_{\omega}\underset{\omega-\nu\in Q^+}{\sum_{\nu\in P^+}}\dim((V_{\omega}
)_{\nu})\left(\sum_{w\in W}\varepsilon_{\omega-w\nu}q^{-2(w\nu,\rho)}K_{-4w\nu}\right).
\end{gather}

Recall now that $U_q(\g;\varepsilon,\eta)$ can be identified with $U_q(\g)$ as a~vector space by a~map
$i_{\varepsilon,\eta}$.
Let us denote by $\lhd_{\varepsilon,\eta}$ the image of $\lhd$ under this map.
Then for $x,y\in U_q(\g)$ fixed, it is easily seen from the triangular decomposition that the
$x\lhd_{\varepsilon,\eta}y$ live in a~fixed finite-di\-men\-sio\-nal subspace of~$U_q(\g)$ as the
$\varepsilon,\eta$ vary, and the resulting map $(\varepsilon,\eta) \mapsto x\lhd_{\varepsilon,\eta}y$ is
then continuous.
Furthermore, if~$V$ is a~finite-di\-men\-sio\-nal right $U_q(\g)$-module with space of fixed elements
$V_{\triv}$, we can find $p\in U_q(\g)$ such that for any $v\in V$, the element~$vp$ is the projection of~$v$ onto~$V_{\triv}$.
It follows from the previous paragraph and the above remarks that when $\varepsilon_r\neq 0$ for any~$r$,
we have $i_{\varepsilon,+}(z_{\omega}) = K_{\omega}^{-4}\lhd_{\varepsilon,+} p_{\omega}$ for some
\emph{fixed} $p_{\omega}\in U_q(\g)$.
By continuity, it then follows that~\eqref{EqCharMod} in fact holds for arbitrary $\varepsilon\in
\Char_{\R}(Q^+)$.

The conclusion of the argument now follows as in~\cite[Theorem 8.6]{Jos1}.
\end{proof}

Let now $S\subseteq \Phi^+$, and let $\varepsilon$ extend the characteristic function of $S$.
Let $U_q(\ttt_S)$ be the Hopf $^*$-subalgebra of $U_q(\g;\varepsilon,+)$ generated by the
$K_{\omega_r}^{\pm}$ with $1\leq r \leq l$ and $X_r^{\pm}$ with $r\in S$.
Let $U_q(\q^+_{S})$ be the Hopf subalgebra of $U_q(\g;\varepsilon,+)$ generated by $U_q(\ttt_S)$ and all
$X_r^+$ with $r\in I$.
It is easy to see that $U_q(\q^+_{S})$ can be isomorphically imbedded into $U_q(\g)$.
Let $V$ be a~finite-di\-men\-sio\-nal highest weight representation of $U_q(\ttt_{S})$ associated to
a~character in $\Char_{\R^+}(P)$.
Then we can extend this to a~representation of $U_q(\q^+_S)$ on $V$~\cite[Section 2.3.1]{Vak1}, and
hence we can form
\begin{gather*}
\Ind_{\varepsilon}(V)=U_q(\g;\varepsilon,+)\underset{U_q(\q^+_S)}{\otimes}V.
\end{gather*}
The following proposition complements Proposition~\ref{PropPosDef0}.
\begin{Proposition}
\label{PropPosDef}
Let $S \subseteq \Phi^+$, and let $\varepsilon$ restrict to the characteristic function of $S$.
Let $V$ be an irreducible highest weight representation of $U_q(\ttt_S)$ associated to a~character $\lambda
\in \Char_{\R^+}(P)$.
Then the $U_q(\g;\varepsilon,+)$-representation $\Ind_{\varepsilon}(V)$ is irreducible.
\end{Proposition}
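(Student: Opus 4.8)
The plan is to exhibit $\Ind_{\varepsilon}(V)$ as a highest weight module of highest weight $\lambda$ carrying a single central character, and then to use the explicit description of the $\varepsilon$-modified Harish--Chandra map in Lemma~\ref{LemHarish} to rule out any primitive vector of strictly lower weight, so that every nonzero submodule coincides with the whole module. For the structure: since $V$ is a quotient of the $U_q(\q^+_S)$-Verma module $U_q(\q^+_S)\otimes_{U_q(\bb^+)}\C_\lambda$ (with highest weight $\lambda$, using $U_q(\bb^+)\subseteq U_q(\q^+_S)$), inducing up yields a surjection $M_\lambda^{\varepsilon,+}=U_q(\g;\varepsilon,+)\otimes_{U_q(\bb^+)}\C_\lambda\to\Ind_{\varepsilon}(V)$, and the quantum Poincar\'e--Birkhoff--Witt decomposition of $U_q(\g;\varepsilon,+)$ relative to the parabolic $U_q(\q^+_S)$ (a standard structural fact, proved by modifying the arguments for $U_q(\g)$) shows $\Ind_{\varepsilon}(V)\neq0$. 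Hence $\Ind_{\varepsilon}(V)$ is a highest weight module of highest weight $\lambda$: it is the direct sum of its finite-dimensional weight spaces, all lying in $\lambda-Q^+$; its $\lambda$-weight space is spanned by a single vector $v$, the image of the highest weight vector of $M_\lambda^{\varepsilon,+}$; and $v$ is annihilated by every $X_r^+$ and generates $\Ind_{\varepsilon}(V)$.

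The triangular decomposition~\eqref{EqTriang} then gives the standard conclusion that every $z\in\ZZ(U_q(\g;\varepsilon,+))$ acts on $M_\lambda^{\varepsilon,+}$, hence on its quotient $\Ind_{\varepsilon}(V)$, by the scalar $\chi_\lambda(\tau_{Z,\varepsilon}(z))$. Let $N\subseteq\Ind_{\varepsilon}(V)$ be a nonzero submodule. Its set of weights lies in $\lambda-Q^+$, hence has a maximal element $\mu$ for the root order; choose $0\neq w\in N_\mu$. By maximality $X_r^+w\in N_{\mu+\alpha_r}=0$ for all $r$, so $w$ is a primitive vector, $U_q(\g;\varepsilon,+)w$ is a quotient of $M_\mu^{\varepsilon,+}$, and the same computation gives $zw=\chi_\mu(\tau_{Z,\varepsilon}(z))\,w=\chi_\lambda(\tau_{Z,\varepsilon}(z))\,w$ for every central $z$; as $w\neq0$, this forces $\chi_\mu\circ\tau_{Z,\varepsilon}=\chi_\lambda\circ\tau_{Z,\varepsilon}$.

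By Lemma~\ref{LemHarish} this equality amounts to
\[\sum_{\sigma\in W}\varepsilon_{\omega-\sigma\omega}\,q^{(-2\sigma\omega,\rho)}\,\chi_\mu(K_{-4\sigma\omega})=\sum_{\sigma\in W}\varepsilon_{\omega-\sigma\omega}\,q^{(-2\sigma\omega,\rho)}\,\chi_\lambda(K_{-4\sigma\omega})\qquad\text{for all }\omega\in P^+.\]
Since $\varepsilon$ is the characteristic function of $S$, the coefficient $\varepsilon_{\omega-\sigma\omega}$ vanishes unless $\omega-\sigma\omega\in\N[S]$, in which case it equals $1$; and for $\omega$ regular dominant this happens precisely for $\sigma$ in the parabolic subgroup $W_S$ generated by $S$. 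Thus for regular dominant $\omega$ the identity becomes the equality --- at $\mu$ and at $\lambda$ --- of the ordinary Harish--Chandra central characters of the Levi $U_q(\ttt_S)$. Transplanting the separation-of-central-characters argument of~\cite[Section~8]{Jos1} to the Levi, and using $\mu\in\lambda-Q^+$, we conclude that $\mu$ lies in the $W_S$-dot-orbit of $\lambda$; but as $V$ is finite-dimensional, $\lambda$ is dominant integral for the Levi, hence the unique maximal element of that orbit, so $\mu=\lambda$. Then $w$ lies in the one-dimensional space $N_\lambda=\Ind_{\varepsilon}(V)_\lambda=\C v$, whence $v\in N$ and $N=\Ind_{\varepsilon}(V)$. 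This proves irreducibility.

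The crux is the last step: one must verify that the $\varepsilon$-modified Harish--Chandra image of Lemma~\ref{LemHarish} collapses from the full Weyl group $W$ to the Levi Weyl group $W_S$. This is exactly where the hypothesis $\varepsilon_r=0$ for $r\notin S$ is indispensable, and it is what restores irreducibility --- in sharp contrast with $U_q(\g)$ itself, whose corresponding generalized Verma module is \emph{reducible} whenever $\lambda$ is dominant (it surjects onto the finite-dimensional $V_\lambda^{+,+}$). One then has to run Joseph's linkage argument for the Levi and combine it with the inequality $\mu\leq\lambda$ to pin down $\mu=\lambda$.
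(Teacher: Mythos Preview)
Your overall strategy coincides with the paper's: compare central characters via the $\varepsilon$-modified Harish--Chandra map of Lemma~\ref{LemHarish}, observe that the hypothesis $\varepsilon_r=0$ for $r\notin S$ collapses the sum over $W$ to one over $W_S$ for strictly dominant $\omega$, and conclude that a primitive vector of lower weight cannot survive. The gap is in your final step. From ``$\mu$ lies in the $W_S$-dot-orbit of $\lambda$'' and ``$\lambda$ is the unique maximal element of that orbit'' you infer $\mu=\lambda$; this is a non sequitur, since \emph{every} element of that orbit is $\le\lambda$ once $\lambda$ is dominant for the Levi, and nothing forces $\mu$ to be the maximal one. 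Moreover, the displayed $W_S$-identity is not literally the Levi's central-character equality (the shift is by $\rho$, not by $\rho_S$, and $\omega$ ranges over all of $P^{++}$), so the appeal to ``Joseph's linkage argument for the Levi'' needs to be made precise rather than cited.

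The paper closes this more elementarily, and this is where the hypothesis $\lambda\in\Char_{\R^+}(P)$ does real work. Working in $M_\lambda^{\varepsilon,+}$ with a primitive vector of weight $\lambda'\neq\lambda$, one specializes the $W_S$-identity to $\omega=\rho+\omega_r$ for $r\notin S$; since $W_S$ fixes $\omega_r$, the $\omega_r$-contribution factors out and one obtains $(\lambda'_{-4\omega_r}-\lambda_{-4\omega_r})\,C=0$ with
\[
C=\sum_{\sigma\in W_S}q^{(-2\sigma\rho,\rho)}\lambda_{-4\sigma\rho}>0
\]
by positivity of $\lambda$. Hence $\lambda'_{\omega_r}=\lambda_{\omega_r}$ for every $r\notin S$, which forces $\lambda-\lambda'$ into $\sum_{s\in S}\N\alpha_s$ and therefore $v_{\lambda'}\in U_q(\ttt_S)v_\lambda$; irreducibility of $V$ then kills its image in $\Ind_\varepsilon(V)$, giving $\Ind_\varepsilon(V)=V_\lambda^{\varepsilon,+}$. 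Your argument can be repaired along the same line: what you actually need from the $W_S$-identity is $\lambda-\mu\in\sum_{s\in S}\N\alpha_s$; once you have that, the parabolic PBW you already invoked places your primitive vector inside $1\otimes V\subseteq\Ind_\varepsilon(V)$, and irreducibility of $V$ forces $\mu=\lambda$.
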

\begin{proof}
By its universal property, $\Ind_{\varepsilon}(V)$ can be identified with a~quotient of
$M_{\lambda}^{\varepsilon,+}$.
We want to show that this quotient coincides with $V_{\lambda}^{\varepsilon,+}$.

Suppose that $v_{\lambda'}$ is a~highest weight vector inside $M_{\lambda}^{\varepsilon,+}$ at weight
$\lambda'$ different from $\lambda$.
By Lemma~\ref{LemHarish}, it follows that
\begin{gather*}
\sum_{w\in W}\varepsilon_{\omega-w\omega}q^{(-2w\omega,\rho)}\lambda_{-4w\omega}'=\sum_{w\in W}
\varepsilon_{\omega-w\omega}q^{(-2w\omega,\rho)}\lambda_{-4w\omega}
\end{gather*}
for all $\omega\in P^+$.
Now if $w = s_{\alpha_{i_1}}\cdots s_{\alpha_{i_p}}$ in reduced form, with $s_{\alpha}$ the reflection
across the root~$\alpha$, we have
\begin{gather*}
\omega-w\omega=\sum_{t=1}^p s_{\alpha_{i_1}}\cdots s_{\alpha_{i_{t-1}}}(\omega-s_{\alpha_{i_t}}\omega),
\end{gather*}
where each term is positive.
It follows that we have
\begin{gather*}
\sum_{w\in W_S}q^{(-2w\omega,\rho)}\lambda_{-4w\omega}'=\sum_{w\in W_S}q^{(-2w\omega,\rho)}
\lambda_{-4w\omega}
\end{gather*}
for all \emph{strictly} dominant $\omega$, where $W_S$ is the Coxeter group generated by reflections
around simple roots $\alpha_s$ with $s\in S$.

Taking $\omega = \rho + \omega_r$ with $r\notin S$, we get
$(\lambda_{-4\omega_r}'-\lambda_{-4\omega_r})C=0$ with
\begin{gather*}
C=\sum_{w\in W_S}q^{(-2w\rho,\rho)}\lambda_{-4w\rho}>0.
\end{gather*}
Hence $\lambda_{\omega_r}' = \lambda_{\omega_r}$ for all $r\notin S$.
We deduce that $v_{\lambda'} \in U_q(\ttt_S)v_{\lambda}$, and so the image of $v_{\lambda'}$ in
$\Ind_{\varepsilon}(V)$ is zero.
This implies that $\Ind_{\varepsilon}(V) = V_{\lambda}^{\varepsilon,+}$.
\end{proof}

The case $\varepsilon \in \Char_{\{-1,0,1\}}(Q^+)$ is not so easy to treat in general.
In the following, we will restrict ourselves to the case where we have one $\varepsilon_t<0$ at a root satisfying a particular condition, while $\varepsilon_r\geq 0$ for $r\neq t$.
This will correspond precisely to the `Hermitian symmetric' case.
\begin{Theorem}
Let $\varepsilon$ be such that there is a~unique simple root $\alpha_t$ with $\varepsilon_t < 0$, while
$\varepsilon_r \in\{0,1\}$ for $r\neq t$.
Assume moreover that $\alpha_t$ appears with multiplicity at most $1$ in each positive root.
Let $\lambda \in \Char_{\R^+}(P)$.
Then $V_{\lambda}^{\varepsilon,+}$ is unitarizable if and only if $\lambda_{\alpha_r}^4\in q_r^{2\N}$ for
all $r$ with $\varepsilon_r\neq 1$.
\end{Theorem}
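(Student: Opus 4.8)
The plan is to reduce the statement to the two already-established cases, Proposition~\ref{PropPosDef0} and Proposition~\ref{PropPosDef}, together with the rank-one analysis behind Lemma~\ref{LemPosComp}, by isolating the single ``non-compact direction'' $\alpha_t$. The ``only if'' part should be handled exactly as in the proof of Proposition~\ref{PropPosDef0}: for each $r$ with $\varepsilon_r=1$ the elements $K_{\alpha_r},X_r^{\pm}$ generate a copy of the compact quantized enveloping algebra $\widetilde{U}_q(\kk_{\sss})$, so unitarizability of $V_{\lambda}^{\varepsilon,+}$ forces unitarizability of the restriction, which by Lemma~\ref{LemPosComp} gives $\lambda_{\alpha_r}^4\in q_r^{2\N}$. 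So the content is the ``if'' direction.

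For the ``if'' direction, first I would use the rescaling isomorphism~\eqref{Iso}: since only $\varepsilon_t$ is negative, we may factor $\varepsilon_t = -|\varepsilon_t|$ and, by the positive-number rescaling applied to all the other generators, reduce to the case $\varepsilon_r = \delta_{r\in S}$ for a subset $S\ni$ (the indices with $\varepsilon_r=1$) while $\varepsilon_t=-1$ and $\varepsilon_r=0$ for the remaining $r$ (the hypothesis $\lambda_{\alpha_r}^4\in q_r^{2\N}$ for $r$ with $\varepsilon_r\neq 1$ is preserved up to an innocuous renormalization of $\lambda$, as in the proof of Proposition~\ref{PropPosDef0}). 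Next, as in Proposition~\ref{PropPosDef}, I would realize $V_{\lambda}^{\varepsilon,+}$ as an induced module $\Ind_{\varepsilon}(V)$ from the ``parabolic'' $U_q(\q^+_S)$, where now $U_q(\ttt_S)$ is enlarged to also include the index $t$: more precisely, set $S' = S\cup\{t\}$, let $\llll = U_q(\ttt_{S'})$ be the Levi-type subalgebra generated by all $K_{\omega_r}$, by $X_r^{\pm}$ for $r\in S$, and by $X_t^{\pm}$ with the sign-$(-1)$ commutator relation $(\mathrm{C})^q_{\varepsilon,+}$. The Harish-Chandra/central-character argument of Lemma~\ref{LemHarish} and the exact computation in the proof of Proposition~\ref{PropPosDef} go through verbatim with $W_S$ replaced by $W_{S'}$ (the key positivity fact $\omega-w\omega\in Q^+$ is Weyl-group-general and independent of signs), showing that $\Ind_{\varepsilon}(V)$ is irreducible, hence equals $V_{\lambda}^{\varepsilon,+}$, provided $V$ is the irreducible highest-weight module of $\llll$ at $\lambda$ and $\llll$ itself acts unitarily on $V$.

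It remains to check that the invariant Hermitian form on $V_{\lambda}^{\varepsilon,+}$ is positive definite, and this is where the real work lies. The induced-module picture factors the form into: (i) the ``$U_q(\n_S^-)$ part'' coming from the compact directions and the free non-compact nilpotent directions $r$ with $\varepsilon_r=0$, which is positive by Proposition~\ref{PropPosDef0} combined with the irreducibility just established; and (ii) the rank-one $\alpha_t$-block, a $U_{q_t}$-algebra with the $\varepsilon_t=-1$ commutator $[X_t^+,X_t^-] = -\frac{K_{\alpha_t}^2+K_{\alpha_t}^{-2}}{q_t-q_t^{-1}}$. For this block one computes, exactly as in Lemma~\ref{LemPosComp},
\begin{gather*}
\langle(X_t^+)^k(X_t^-)^kv_{\lambda}^{\varepsilon,+},v_{\lambda}^{\varepsilon,+}\rangle
=\prod_{l=1}^k\frac{\big(q_t^l-q_t^{-l}\big)\big(q_t^{-l+1}\lambda_{\alpha_t}^2+q_t^{l-1}\lambda_{\alpha_t}^{-2}\big)}{\big(q_t-q_t^{-1}\big)^2},
\end{gather*}
and crucially the sign flip turns the factor $q_t^{-l+1}\lambda_{\alpha_t}^2+q_t^{l-1}\lambda_{\alpha_t}^{-2}$ into a strictly positive quantity for \emph{every} $k$ and every real $\lambda_{\alpha_t}$ — so no integrality condition on $\lambda_{\alpha_t}$ is needed and the $\alpha_t$-direction contributes an infinite-dimensional positive-definite block. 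The main obstacle is making the ``factorization of the form'' step rigorous: one must argue, using the triangular decomposition of $U_q(\n^-)$ relative to $\n_{S'}^-$ and the Shapovalov-type structure, that positivity of the two pieces implies positivity of the whole form on $V_{\lambda}^{\varepsilon,+}$. I would do this by induction on the weight, expressing a general matrix element through the $\llll$-action (handled by Proposition~\ref{PropPosDef0} and the rank-one computation above) and the commutation of the remaining raising operators past lowering operators, where the already-known positive cases supply the needed nonnegativity at each step; the induced-module irreducibility guarantees there is no kernel, so positive semidefinite upgrades to positive definite.
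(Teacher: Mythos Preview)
Your approach differs substantially from the paper's, and the ``factorization of the form'' step---which you correctly flag as the main obstacle---is a genuine gap that your sketch does not close. Positivity of a Shapovalov-type form does not in general decompose into positivity on a Levi block and positivity on a complementary rank-one block: once $\alpha_t$ is linked to other simple roots in the Dynkin diagram, the form on a weight space $\lambda-\alpha$ with $\alpha$ involving both $\alpha_t$ and other simple roots mixes contributions that your pieces (i) and (ii) do not control separately. Your rank-one computation only handles the pure $\alpha_t$-string through $v_\lambda$, and your induction-on-weight sketch does not explain why commuting raising past lowering operators preserves nonnegativity when the structure constants carry no sign information. There is also some internal confusion: you place $t$ in the Levi $S'=S\cup\{t\}$, yet your part~(i) is described as the ``$r\neq t$'' part including the $\varepsilon_r=0$ directions, which lie in the nilradical of $\q^+_{S'}$, not in the Levi; so the two halves of your decomposition do not match the parabolic you set up.

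The paper bypasses this difficulty entirely by a deformation/signature-rigidity argument. It first reduces, by a \emph{limit} as in Proposition~\ref{PropPosDef0} (not by rescaling, which cannot move $\varepsilon_r$ off~$0$), to the case $\varepsilon_r=1$ for all $r\neq t$. Then, with $S=\Phi^+\setminus\{t\}$ (so $t$ is \emph{not} in the Levi), it transports $M_\lambda^{\varepsilon,+}$ via the isomorphism~\eqref{Iso} to $M_\gamma^{+,+}$ with $\gamma_{\alpha_t}^2\in\C\setminus\R$, and invokes~\cite[Proposition~5.13]{Jos3} to get that $\Ind_\varepsilon(V_\lambda)$ is irreducible for every $\varepsilon_t<0$. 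All these induced modules are realized on the \emph{same} fixed quotient of $U_q(\n^-)$, so the Hermitian forms vary continuously in $\varepsilon_t$ and are nondegenerate throughout; hence the signature on each finite-dimensional weight space is constant for $\varepsilon_t<0$. At $\varepsilon_t=0$ the form is positive definite by Propositions~\ref{PropPosDef0} and~\ref{PropPosDef}, so by continuity it is positive for $\varepsilon_t<0$ small, and then by constancy of the signature for all $\varepsilon_t<0$. Positivity is never established directly; it is inherited from the boundary case $\varepsilon_t=0$ via rigidity of the signature along an irreducible family. If you want to repair your argument, the missing ingredient is precisely this: a uniform irreducibility statement for the family $\{\Ind_\varepsilon(V_\lambda)\}_{\varepsilon_t\le 0}$ that pins down the underlying space and lets you deform the form without the signature jumping.
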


One can indeed check by a case-by-case analysis, using for example the tables in~\cite[Appendix~C]{Kna2}, that the Vogan diagram associated to a sign pattern will determine a Hermitian symmetric space precisely when the above multiplicity condition is satisfied.
\begin{proof}
By a~same kind of limiting argument as in Proposition~\ref{PropPosDef0}, the general case can be deduced
from the case with $\varepsilon_r=1$ for $r\neq t$.

Suppose then that $\varepsilon_t<0$ and $\varepsilon_r = 1$ for $r\neq t$, where $\alpha_r$ appears with multiplicity at most $1$ in each positive root.
Write $S = \Phi^+\setminus \{t\}$.
We have the algebra automorphism $\phi\colon  U_q(\g;\varepsilon,+)\rightarrow U_q(\g)$ appearing in~\eqref{Iso}.
By means of this isomorphism, we obtain a~natural isomorphism $M_{\lambda}^{\varepsilon,+}\cong
M_{\gamma}^{+,+}$, where $\gamma \in \Char_{\C}(P)$ is such that $\gamma_{\alpha_r}^4 =
\varepsilon_r\lambda_{\alpha_r}^4$ for all $r$.
In particular, $\gamma_{\alpha_t}^2 \in \C\setminus \R$.
By the condition we assume on $\alpha_r$, we can apply~\cite[Proposition~5.13]{Jos3} and deduce that $\Ind_{\varepsilon}(V_{\lambda})$ is irreducible, where
$V_{\lambda}$ denotes the irreducible representation of $U_q(\q^+_S)$ at highest weight $\lambda$.
Hence the signatures of the Hermitian inner products on the $\Ind_{\varepsilon}(V_{\lambda})$ are constant
as $\varepsilon_t<0$ varies.
Indeed, these spaces can be identified canonically with a~fixed quotient of $U_q(\n^-)$,
see~\cite[Proposition~2.81]{Vak1}, and then the Hermitian inner products clearly form a~continuous family
as $\varepsilon$ varies.

From Proposition~\ref{PropPosDef}, we know that the Hermitian inner product on
$\Ind_{\varepsilon}(V_{\lambda})$ for $\varepsilon_t=0$ is positive definite.
It follows that the Hermitian inner product on a~weight space of $\Ind_{\varepsilon}(V_{\lambda})$ is
positive for $\varepsilon_t<0$ small.
As the signature is constant, this holds for all $\varepsilon_t<0$.
\end{proof}

\section{Quantized homogeneous spaces}\label{section3}

\begin{Definition}
For $\varepsilon,\eta\in\Char_{\R}(Q^+)$, we denote by $U_q(\g;\varepsilon,\eta)_{\fin}$ the space of
locally finite vectors in $U_q(\g;\varepsilon,\eta)$ with respect to the right adjoint action by
$U_q(\g;\eta,\eta)$ (cf.\ Definition~\ref{DefRA}),
\begin{gather*}
U_q(\g;\varepsilon,\eta)_{\fin}=\big\{x\in U_q(\g;\varepsilon,\eta)\, \big|\dim(x\lhd U_q(\g;\eta,\eta))<\infty\big\}.
\end{gather*}
\end{Definition}

It is easily seen that the space $U_q(\g;\varepsilon,\eta)_{\fin}$ is a~$^*$-subalgebra of
$U_q(g;\varepsilon,\eta)$ (cf.\
\cite[Corollary~2.3]{Jos1}), and in the following it will always be treated as a~right $U_q(\g)$-module by~$\lhd$.

A similar definition of ${}_{\fin}U_q(\g;\varepsilon,\eta)$ can be made with respect to the left
adjoint action of~$U_q(\g;\varepsilon,\varepsilon)$, and the two resulting algebras
$U_q(\g;\varepsilon,\eta)_{\fin}$ and ${}_{\fin}U_q(\g;\varepsilon,\eta)$ should in some sense be seen
as dual to each other.
For example, the $U_q(\g;\varepsilon,+)_{\fin}$ will lead to compact quantum homogeneous spaces, while
the ${}_{\fin}U_q(\g;\varepsilon,+)$ should lead to non-compact quantum homogeneous spaces such as
quantum bounded symmetric domains~\cite{Vak1}.
However, in this paper we will restrict ourselves to the compact case.

The $U_q(\g;\varepsilon,\eta)_{\fin}$ are sufficiently large, as the next proposition shows,
extending Lemma~\ref{LemFinK}.
\begin{Proposition}
\label{PropBig}
As a~right $U_q(\g)$-module, $U_q(\g;\varepsilon,+)_{\fin}$ is generated by the $K_{\omega}^{-4}$ with
$\omega \in P^+$.
The algebra generated by $U_q(\g;\varepsilon,+)_{\fin}$ and the $K_{\omega_r}^4$ equals the subalgebra of
$U_q(\g;\varepsilon,+)$ generated by the $K_{\omega_r}^{\pm 4}$ and the $K_{\alpha_r}X_r^{\pm}$.
\end{Proposition}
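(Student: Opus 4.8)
The plan is to prove the two assertions in turn, using the finite-dimensionality statement of Lemma~\ref{LemFinK} together with the explicit coproduct formulas for the $X_r^{\pm}$ and the adjoint action $\lhd$.

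First I would establish that each $K_{\omega}^{-4}$ with $\omega\in P^+$ indeed lies in $U_q(\g;\varepsilon,+)_{\fin}$: this is precisely Lemma~\ref{LemFinK}. For the converse inclusion, I would argue that the right $U_q(\g)$-submodule $M$ generated by the $K_{\omega}^{-4}$, $\omega\in P^+$, exhausts $U_q(\g;\varepsilon,+)_{\fin}$. The standard approach (following Joseph--Letzter, cf.~\cite{Jos1}) is to use the triangular decomposition~\eqref{EqTriang} and a weight/filtration argument: $U_q(\g;\varepsilon,+)$ is graded by $Q$ via the $K$-action, and on each graded component the adjoint action of $U(\h)$ is diagonalizable. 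One shows that the locally finite part is spanned by elements of the form $(\text{lowering part})\lhd(\text{raising part})$ applied to the $K_{\mu}$ with $\mu$ in the negative cone; the point is that $K_{\mu}\lhd U_q(\g)$ is finite-dimensional exactly when $\mu\in -4P^+$ (this is where $\varepsilon$-independence matters, and where Lemma~\ref{LemFinK} and the continuity/rescaling arguments from the proof of Lemma~\ref{LemHarish} are invoked to reduce to the classical case $\varepsilon=+$). Then any locally finite vector, decomposed along the triangular decomposition, must have its ``Cartan part'' supported on such $K_{\mu}$, and one reconstructs it from the $K_{\omega}^{-4}\lhd U_q(\g)$.

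For the second assertion, let $B$ be the algebra generated by $U_q(\g;\varepsilon,+)_{\fin}$ together with the $K_{\omega_r}^4$, and let $B'$ be the subalgebra generated by the $K_{\omega_r}^{\pm 4}$ and the $K_{\alpha_r}X_r^{\pm}$. To see $B\subseteq B'$ I would first check $K_{\omega}^{-4}\in B'$ for $\omega\in P^+$ (clear, since these are products of the $K_{\omega_r}^{-4}$), and then that $K_{\omega}^{-4}\lhd U_q(\g)\subseteq B'$; using the explicit coproduct $\Delta^{+}(X_r^{\pm}) = X_r^{\pm}\otimes K_{\alpha_r} + K_{\alpha_r}^{-1}\otimes X_r^{\pm}$ and the antipode formula, one computes $y\lhd X_r^{\pm}$ as a combination of terms each involving $K_{\alpha_r}^{\pm 1}X_r^{\pm}$ and $K_{\alpha_r}^{\mp 1}$-conjugations, which all stay inside $B'$. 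Combined with the first assertion this gives $U_q(\g;\varepsilon,+)_{\fin}\subseteq B'$, and since $K_{\omega_r}^{4}\in B'$ as well, $B\subseteq B'$. For the reverse inclusion $B'\subseteq B$: the $K_{\omega_r}^{4}$ are in $B$ by definition, so also $K_{\omega}^{\pm 4}$ for $\omega\in P^+$; and one exhibits each $K_{\alpha_r}X_r^{\pm}$ inside $K_{\omega}^{-4}\lhd U_q(\g)$ for a suitable $\omega$ (e.g.\ $\omega$ with $\alpha_r$ appearing, so that the $X_r^{\pm}$-component of the adjoint action produces a nonzero multiple of $K_{\alpha_r}^{\pm1}X_r^{\pm}K_{\mu}$, which after multiplying by an appropriate $K_{\nu}^4\in B$ becomes $K_{\alpha_r}X_r^{\pm}$). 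Hence $B'\subseteq B$ and the two algebras coincide.

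The main obstacle I expect is the first inclusion ``$U_q(\g;\varepsilon,+)_{\fin}$ is generated by the $K_{\omega}^{-4}$'': this requires the genuine structural input about the locally finite part of a quantized enveloping algebra — that it is ``not much bigger'' than $U(\h)^{-}\lhd U_q(\g)$ — adapted to the deformed product $m_{\varepsilon,+}$. The deformation is handled by the $i_{\varepsilon,+}$ identification and the continuity argument already used in Lemma~\ref{LemHarish}, so the cited results of~\cite{Jos1} carry over, but spelling out that the locally finite vectors cannot escape the span of $K_{\omega}^{-4}\lhd U_q(\g)$ is the technical heart. The second assertion, by contrast, is essentially a direct computation with coproducts once the first is in hand.
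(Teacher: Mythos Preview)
Your proposal is essentially correct and follows the paper's own approach: the paper's proof consists of the single sentence ``Again, the proof of~\cite[Theorem 6.4]{Jos1} can be directly modif\/ied'', and what you have sketched is precisely how such a modification would go (triangular decomposition, weight/filtration argument for the first assertion, explicit coproduct/antipode computations for the second).

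One point of divergence worth noting: you propose handling the $\varepsilon$-dependence by reducing to $\varepsilon=+$ via the $i_{\varepsilon,+}$ identification and the continuity argument from Lemma~\ref{LemHarish}. The paper, by contrast, intends the Joseph--Letzter arguments to be rerun \emph{verbatim} in the deformed algebra $U_q(\g;\varepsilon,+)$, checking that each step still holds; indeed, the remark immediately following the proposition points out that the rescaling shortcut only works when all $\varepsilon_r\neq 0$. Your continuity route can be made to work, but it is not entirely automatic: local finiteness is not an obviously closed condition under specialization (a family of finite-dimensional adjoint orbits could in principle degenerate badly), so one has to argue carefully that the module $K_{\omega}^{-4}\lhd U_q(\g)$ lives in a fixed finite-dimensional subspace of $U_q(\g)$ under $i_{\varepsilon,+}$ as $\varepsilon$ varies (this is exactly the kind of uniformity established in the proof of Lemma~\ref{LemHarish}), and conversely that no new locally finite vectors appear in the limit. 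The direct approach avoids this subtlety at the cost of rereading the Joseph--Letzter argument line by line.
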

\begin{proof}
Again, the proof of~\cite[Theorem 6.4]{Jos1} can be directly modified.
\end{proof}

Note that for $\varepsilon_r\neq 0$ for all $r$, the above proposition follows more straightforwardly
from~\cite{Jos1} by a~rescaling argument.

The dependence of $U_q(\g;\varepsilon,\eta)_{\fin}$ on $\varepsilon$ and $\eta$ is weaker than for
$U_q(\g;\varepsilon,\eta)$ itself.
We consider a~special case in the following lemma.
Recall that $A$ denotes the Cartan matrix.
\begin{Lemma}
Consider $\varepsilon,\eta\in \Char_{\R\setminus\{0\}}(Q^+)$, and write $\sgn(\varepsilon_r/\eta_r) =
(-1)^{\chi_r}$.
If $\chi$ is in the range of $A \mmod 2$, then $U_q(\g;\varepsilon,+)_{\fin} \cong
U_q(\g;\eta,+)_{\fin}$ as right $U_q(\g)$-module $^*$-algebras.
\end{Lemma}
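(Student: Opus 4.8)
The plan is to construct an explicit rescaling isomorphism, exactly as in the rescaling argument \eqref{Iso}, but now arranged so that it intertwines the two adjoint actions and respects the $^*$-structures after one passes to the locally finite parts. The point is that the obstruction to a positivity-preserving rescaling on the whole algebra $U_q(\g;\varepsilon,+)$ lies in the signs $\sgn(\varepsilon_r)$, whereas on $U_q(\g;\varepsilon,+)_{\fin}$ this obstruction is weakened because, by Proposition~\ref{PropBig}, the locally finite part is generated as a right $U_q(\g)$-module by the $K_{\omega}^{-4}$, i.e.\ by \emph{even} powers of the $K_{\omega}$ and (after adjoining $K_{\omega_r}^4$) by the combinations $K_{\alpha_r}X_r^{\pm}$. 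Squaring the root-lattice characters is precisely what introduces the Cartan matrix $A$ modulo $2$, and this is why the hypothesis ``$\chi$ lies in the range of $A \bmod 2$'' appears.

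Concretely, first I would choose $b\in \Char_{\C}(P)$ with $b_{\alpha_r}^4 = \eta_r/\varepsilon_r$ as in \eqref{Iso}; the hypothesis $\varepsilon_r,\eta_r\neq 0$ makes this possible, and the map $\phi$ of \eqref{Iso} (with the appropriate $a\in\Char_{\C}(Q^+)$) is a unital algebra isomorphism $U_q(\g;\varepsilon,+)\to U_q(\g;\eta,+)$. One checks that $\phi$ is $\lhd$-equivariant: since $\phi$ sends $K_\omega\mapsto b_\omega K_\omega$ and $X_r^\pm \mapsto a_r X_r^\pm$ with the same coproduct and antipode on both sides, and since $\phi$ restricts to the identity-up-to-rescaling on the Hopf $^*$-algebra $U_q(\g)$ used to define $\lhd$ on both sides, $\phi$ commutes with the adjoint action (this is the same computation underlying the $\lhd$-equivariance of $\Psi_\varepsilon$ in the proof of Lemma~\ref{LemHarish}). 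Hence $\phi$ restricts to a right $U_q(\g)$-module algebra isomorphism $U_q(\g;\varepsilon,+)_{\fin}\to U_q(\g;\eta,+)_{\fin}$; it remains only to arrange that this restriction is a $^*$-isomorphism.

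The main obstacle is precisely this last point: in general $\phi$ fails to be a $^*$-map unless all $\sgn(\varepsilon_r/\eta_r)=+$. To fix this on the locally finite part, I would exploit the freedom to compose $\phi$ with an automorphism of the target coming from a character, and the fact that on $U_q(\g;\eta,+)_{\fin}$ only even powers of the $K$'s and the combinations $K_{\alpha_r}X_r^{\pm}$ occur. Writing $\sgn(\varepsilon_r/\eta_r) = (-1)^{\chi_r}$, I would choose $b$ with $b_{\alpha_r}$ real for those $r$ and purely imaginary of unit modulus in the index-$r$ factor otherwise, so that $b_{\alpha_r}^4>0$; then $b_\omega$ is a phase whenever $\omega\in 2P$, and the generators $K_{\omega}^{-4}$ are sent to positive multiples of themselves. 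The subtlety is the mixed generators $K_{\alpha_r}X_r^{\pm}$: under $\phi$ they go to $b_{\alpha_r} a_r K_{\alpha_r}X_r^{\pm}$, and one needs $b_{\alpha_r}a_r$ to have modulus one with the phases matching the $^*$-operation, i.e.\ $(b_{\alpha_r}a_r)\overline{(b_{\alpha_r}a_r)}=1$ together with the relation forced by $(K_{\alpha_r}X_r^+)^* = X_r^- K_{\alpha_r} = q_r^{\mp}\, K_{\alpha_r}X_r^-$ up to the $(T)^q$ twist. This is where the hypothesis $\chi\in \operatorname{range}(A\bmod 2)$ is used: it guarantees we can pick $\psi\in\Char(Q^+)$ with $\psi_r^2 = (-1)^{\chi_r}$ realized consistently by a character of the \emph{weight} lattice applied via $A$, so that rescaling $X_r^{\pm}\mapsto \psi_r X_r^{\pm}$ together with $K_{\omega_r}\mapsto$ (a fourth root of unity)$\cdot K_{\omega_r}$ fixes the phases of all $K_{\alpha_r}X_r^{\pm}$ simultaneously while still preserving the Serre relations $(S)^q$ and the commutator relations $(C)^q$. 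I would verify on the finite generating set from Proposition~\ref{PropBig} that the resulting composite is $^*$-preserving; equivariance and the algebra-isomorphism property are already in hand, so this completes the proof.
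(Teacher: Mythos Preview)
Your overall strategy coincides with the paper's: build the rescaling isomorphism $\Psi_{\varepsilon,\eta}\colon U_q(\g;\varepsilon,+)\to U_q(\g;\eta,+)$ with $K_\omega\mapsto b_\omega K_\omega$, $X_r^\pm\mapsto b_{\alpha_r}^{-1}X_r^\pm$ where $b_{\alpha_r}^4=\eta_r/\varepsilon_r$, note that it is $\lhd$-equivariant (your reference to the proof of Lemma~\ref{LemHarish} is apt), and hence restricts to the locally finite parts. Up to here you are exactly on the paper's track.

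Where your argument goes off the rails is the $^*$-preservation step. You propose to choose $b$ so that $b_{\alpha_r}^4>0$ for all $r$, but this is impossible: the constraint $b_{\alpha_r}^4=\eta_r/\varepsilon_r$ is \emph{forced} by the algebra-isomorphism requirement, and when $\chi_r=1$ this number is negative. So the specific choice you describe cannot exist, and the subsequent discussion of phases on $K_{\alpha_r}X_r^\pm$ and of an auxiliary $\psi\in\Char(Q^+)$ with $\psi_r^2=(-1)^{\chi_r}$ does not lead anywhere clean.

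The simplification you are missing is this: since the $K_\omega^{-4}$ with $\omega\in P^+$ generate $U_q(\g;\varepsilon,+)_{\fin}$ as a right $U_q(\g)$-\emph{module} (Proposition~\ref{PropBig}), and since $\Psi_{\varepsilon,\eta}$ is $\lhd$-equivariant while $(x\lhd h)^*=x^*\lhd S(h)^*$, it suffices to check $^*$-preservation on the $K_\omega^{-4}$ alone. There is no need to look at the algebra generators $K_{\alpha_r}X_r^\pm$ at all. On $K_\omega^{-4}$ the map multiplies by $b_\omega^{-4}$, so the condition is simply $b_{\omega_r}^4\in\R$ for all $r$. Writing $c_r=\sgn(b_{\omega_r}^4)\in\{-1,1\}$ and using $\alpha_r=\sum_s a_{sr}\omega_s$, the algebra constraint $b_{\alpha_r}^4=\eta_r/\varepsilon_r$ forces $\prod_s c_s^{a_{sr}}=(-1)^{\chi_r}$; the existence of such $c_r$ is precisely the hypothesis that $\chi$ lies in the range of $A\bmod 2$. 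Once the $c_r$ are chosen, one can pick $b_{\omega_r}^4$ real with those signs and magnitudes compatible with $|\eta_r/\varepsilon_r|$, and then any fourth roots give the desired $b\in\Char_\C(P)$. This is the paper's argument, and it avoids all the phase-juggling in your last paragraph.
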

\begin{proof}
Choose $b\in \Char_{\C}(P)$ such that $b_{\alpha_r}^4 = \eta_r/\varepsilon_r$.
Then
\begin{gather*}
\Psi_{\varepsilon,\eta}\colon  \ U_q(\g;\varepsilon,+)\rightarrow U_q(\g;\eta,+),
\qquad
\begin{cases}
X_r^{\pm} \mapsto b_{\alpha_r}^{-1}X_r^{\pm},
\\
K_{\omega} \mapsto b_{\omega}K_{\omega}
\end{cases}
\end{gather*}
is a~unital $\lhd$-equivariant algebra isomorphism.
Hence $\Psi_{\varepsilon,\eta}$ induces a~unital $\lhd$-equivariant algebra isomorphism
$\psi_{\varepsilon,\eta}\colon U_q(\g;\varepsilon,+)_{\fin}\rightarrow U_q(\g;\eta,+)_{\fin}$.

As the $K_{\omega}^{-4}$ with $\omega\in P^+$ generate $U_q(\g;\varepsilon,+)_{\fin}$ as a~module,
$\psi_{\varepsilon,\eta}$ will be $^*$-preserving if and only if $b_{\alpha_r}^4 \in \mathbb{R}$ for all
$r$.
This can be realized if we can find $c_r \in \{-1,1\}$ such that $\prod_{s} c_s^{a_{sr}} =
\sgn(\eta_r/\varepsilon_r)$, which is equivalent with the condition appearing in the statement of the lemma.
\end{proof}

In particular, we find for example that $U_q(\ssl(2m+1);\varepsilon,+)_{\fin}$ for $m\in \N_0$ is
independent of the choice of $\varepsilon\in \Char_{\R\setminus\{0\}}(Q^+)$.
On the other hand, $U_q(\ssl(2);\varepsilon,+)_{\fin}$ are mutually non-isomorphic as $^*$-algebras for
$\varepsilon\in \{-1,0,1\}$, see~\cite{DeC1}.
\begin{Definition}
Let $\varepsilon,\eta\in \Char_{\R}(Q^+)$, $\lambda\in\Char_{\R^+}(P)$, and let
$V_{\lambda}^{\varepsilon,\eta}$ be the irreducible highest weight module of $U_q(\g;\varepsilon,\eta)$ at
$\lambda$ with associated representation $\pi_{\lambda}^{\varepsilon,\eta}$.
We write
\begin{gather*}
B_{\lambda}(\g;\varepsilon,\eta)=\pi_{\lambda}^{\varepsilon,\eta}(U_q(\g;\varepsilon,\eta))
\end{gather*}
and
\begin{gather*}
B_{\lambda}^{\fin}(\g;\varepsilon,\eta)=\pi_{\lambda}^{\varepsilon,\eta}(U_q(\g;\varepsilon,\eta)_{\fin}).
\end{gather*}
\end{Definition}
\begin{Remark}
The space $B_{\lambda}^{\fin}(\g;\varepsilon,\eta)$ \emph{is not} defined as the space
$B_{\lambda}(\g;\varepsilon,\eta)_{\fin}$ of locally finite $\lhd$-elements in
$B_{\lambda}(\g;\varepsilon,\eta)$, although conceivably they are the same in many cases.
In the case $q=1$, the equality of these two algebras goes by the name of the Kostant problem, cf.\
\cite[Remark 3]{Kar1}.
\end{Remark}
\begin{Notation}
We will use the following notation for particular elements in the $B_{\lambda}(\g;\varepsilon,+)$:
\begin{gather*}
Z_r=\pi_{\lambda}^{\varepsilon,+}\big(K_{\omega_r}^{-4}\big),
\qquad
X_r=q_r^{1/2}(q_r^{-1}-q_r)\pi_{\lambda}^{\varepsilon,+}\big(K_{\alpha_r}K_{\omega_r}^{-4}X^+_r\big),
\\
Y_r=X_r^*,
\qquad
W_r=\pi_{\lambda}^{\varepsilon,+}\big(K_{\alpha_r}^4K_{\omega_r}^{-8}\big),
\\
T_r=\big(q_r-q_r^{-1}\big)^2\pi_{\lambda}^{\varepsilon,+}\big(K_{\alpha_r}^2K_{\omega_r}^{-4}
X_r^+X_r^-\big)+\varepsilon_rq_r^{-1}\pi_{\lambda}^{\varepsilon,+}\big(K_{\alpha_r}^4K_{\omega_r}^{-4}
\big)+q_r\pi_{\lambda}^{\varepsilon,+}\big(K_{\omega_r}^{-4}\big)
\\
\phantom{T_r}{}
=\big(q_r-q_r^{-1}\big)^2\pi_{\lambda}^{\varepsilon,+}\big(K_{\alpha_r}^2K_{\omega_r}^{-4}
X_r^-X_r^+\big)+\varepsilon_rq_r\pi_{\lambda}^{\varepsilon,+}\big(K_{\alpha_r}^4K_{\omega_r}^{-4}\big)+q_r^{-1}
\pi_{\lambda}^{\varepsilon,+}\big(K_{\omega_r}^{-4}\big).
\end{gather*}
\end{Notation}

The following commutation relations will be needed later on.
\begin{Lemma}
\label{LemSubAlg}
The elements $W_r$ and $T_r$ commute with $X_r$, $Y_r$, $Z_r$, $T_r$ and $W_r$.
Moreover,
\begin{gather*}
X_rZ_r=q_r^2Z_rX_r,\qquad Y_rZ_r=q_r^{-2}Z_rY_r
\end{gather*}
and
\begin{gather*}
X_r Y_r=-\varepsilon_r W_r+q_r T_r Z_r-q_r^2Z_r^2,
\qquad
Y_r X_r=-\varepsilon_r W_r+q_r^{-1}T_r Z_r-q_r^{-2}Z_r^2.
\end{gather*}
We further have that $T_r$ and $W_r$ are invariant under $\lhd X_r^{\pm}$ and $\lhd K_{\omega}$, while
\begin{gather*}
X_r\lhd X_r^+=0,\qquad Y_r\lhd X_r^+=-q_r^{1/2}\big(q_r^{-1}+q_r\big)Z_r+q_r^{1/2}T_r,
\\
X_r\lhd K_{\omega}=q^{-\frac{(\omega,\alpha_r)}{2}}X_r,\qquad Y_r\lhd K_{\omega}=q^{\frac{(\omega,\alpha_r)}{2}}
X_r,
\\
X_r\lhd X_r^-=q_r^{-1/2}\big(q_r^{-1}+q_r\big)Z_r-q_r^{-1/2}T_r, \qquad Y_r\lhd X_r^-=0
\end{gather*}
and
\begin{gather*}
Z_r\lhd X_{r}^+=q_r^{1/2}X_r,
\qquad
Z_r\lhd K_{\omega}=0,
\qquad
Z_r\lhd X_r^-=-q_r^{-1/2}Y_r.
\end{gather*}

Finally, all elements $X_r$, $Y_r$, $Z_r$, $T_r$, $W_r$ are inside $B_{\lambda}^{\fin}(\g;\varepsilon,+)$.
\end{Lemma}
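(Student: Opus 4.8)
The plan is to reduce every assertion to a direct computation inside the rank-one subalgebra $A_r\subseteq U_q(\g;\varepsilon,+)$ generated by $X_r^\pm$, $K_{\alpha_r}^{\pm1}$ and $K_{\omega_r}^{\pm1}$: all of $X_r,Y_r,Z_r,T_r,W_r$ are of the form $\pi_\lambda^{\varepsilon,+}(a)$ with $a\in A_r$, and $A_r$ is an $\varepsilon_r$-deformation of $U_{q_r}(\su(2))$ adjoined with the grouplike $K_{\omega_r}$. The only structural facts needed are the relations $(\mathrm{T})^q$ and $(\mathrm{C})^q_{\varepsilon,+}$ (recall $\eta_r=1$), commutativity of the $K_\omega$, and the bookkeeping remark that $K_{\omega_r}$ and $K_{\alpha_r}^2K_{\omega_r}^{-4}$ commute with $X_r^+X_r^-$ (the $q_r$-weights picked up from the two tori cancel), while $K_{\alpha_r}^4K_{\omega_r}^{-8}$ commutes even with $X_r^\pm$. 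The first thing to check is that the two displayed formulas for $T_r$ coincide: their difference is $\pi_\lambda^{\varepsilon,+}$ applied to $(q_r-q_r^{-1})^2K_{\alpha_r}^2K_{\omega_r}^{-4}[X_r^+,X_r^-]+\varepsilon_r(q_r^{-1}-q_r)K_{\alpha_r}^4K_{\omega_r}^{-4}+(q_r-q_r^{-1})K_{\omega_r}^{-4}$, which vanishes identically once $(\mathrm{C})^q_{\varepsilon,+}$ is substituted for $[X_r^+,X_r^-]$.

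For the algebra relations: $X_rZ_r=q_r^2Z_rX_r$ and $Y_rZ_r=q_r^{-2}Z_rY_r$ are immediate from $(\mathrm{T})^q$. To obtain the formulas for $X_rY_r$ and $Y_rX_r$ one uses $Y_r=X_r^*$ and self-adjointness of the $K_\omega$ to rewrite $Y_r$ as a scalar multiple of $\pi_\lambda^{\varepsilon,+}(X_r^-K_{\omega_r}^{-4}K_{\alpha_r})$, multiplies out, pushes all torus elements to one side by $(\mathrm{T})^q$, and recognizes the resulting normally ordered expression — a multiple of $\pi_\lambda^{\varepsilon,+}(K_{\alpha_r}^2K_{\omega_r}^{-8}X_r^+X_r^-)$ plus torus terms — as exactly $q_rT_rZ_r-\varepsilon_rW_r-q_r^2Z_r^2$ by the first formula for $T_r$; the $Y_rX_r$ identity comes out the same way from the second formula for $T_r$. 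That $W_r$ and $T_r$ commute with $X_r,Y_r,Z_r,T_r,W_r$ is the quantum Casimir fact: $W_r=\pi_\lambda^{\varepsilon,+}(K_{\alpha_r}^4K_{\omega_r}^{-8})$ is central in $A_r$ by the weight-cancellation remark, and $[X_r^\pm,T_r]=0$ is checked directly from $(\mathrm{T})^q$ and $(\mathrm{C})^q_{\varepsilon,+}$; both moreover commute with every $K_\omega$.

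For the action of $\lhd$, I would use the explicit form of the right adjoint action from Definition~\ref{DefRA}, i.e.\ $x\lhd K_\omega=K_\omega^{-1}xK_\omega$ and $x\lhd X_r^\pm=S(X_r^\pm)xK_{\alpha_r}+K_{\alpha_r}xX_r^\pm$ with $S(X_r^\pm)=-q_r^{\pm1}X_r^\pm$. Conjugation by $K_\omega^{-1}$ makes $X_r,Y_r,Z_r$ into $\lhd K_\omega$-eigenvectors with the eigenvalues read off from $(\mathrm{T})^q$, and leaves $T_r,W_r$ fixed. Applying $\lhd X_r^\pm$ to $x=K_{\omega_r}^{-4}$ and normal-ordering gives $Z_r\lhd X_r^+=q_r^{1/2}X_r$ and $Z_r\lhd X_r^-=-q_r^{-1/2}Y_r$; applying $\lhd X_r^\pm$ to the defining monomials of $X_r$ and $Y_r$ and substituting $(\mathrm{C})^q_{\varepsilon,+}$ wherever a commutator $[X_r^+,X_r^-]$ is produced yields the remaining formulas, including $Y_r\lhd X_r^+=q_r^{1/2}T_r-q_r^{1/2}(q_r^{-1}+q_r)Z_r$, while $T_r,W_r$ are killed by $\lhd X_r^\pm$ because $S(X_r^\pm)K_{\alpha_r}+K_{\alpha_r}X_r^\pm=0$ and they are central in $A_r$. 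In particular one reads off $X_r=q_r^{-1/2}(Z_r\lhd X_r^+)$, $Y_r=-q_r^{1/2}(Z_r\lhd X_r^-)$ and $T_r=q_r^{-1/2}(Y_r\lhd X_r^+)+(q_r^{-1}+q_r)Z_r$.

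Finally, for the membership statement: since $B_\lambda^{\fin}(\g;\varepsilon,+)=\pi_\lambda^{\varepsilon,+}(U_q(\g;\varepsilon,+)_{\fin})$ and $U_q(\g;\varepsilon,+)_{\fin}$ is a $\lhd$-submodule of $U_q(\g;\varepsilon,+)$, the subspace $B_\lambda^{\fin}(\g;\varepsilon,+)$ is stable under the right $U_q(\g)$-action $\lhd$. Now $Z_r=\pi_\lambda^{\varepsilon,+}(K_{\omega_r}^{-4})$ lies in it by Lemma~\ref{LemFinK}; using $\alpha_r=\sum_sa_{sr}\omega_s$ and $a_{rr}=2$ one rewrites $K_{\alpha_r}^4K_{\omega_r}^{-8}=K_{\nu_r}^{-4}$ with $\nu_r=-\sum_{s\ne r}a_{sr}\omega_s\in P^+$, so $W_r$ also lies in it by Lemma~\ref{LemFinK}; and $X_r,Y_r,T_r$ are obtained from $Z_r$ by applying $\lhd X_r^\pm$ (with scalar corrections by multiples of $Z_r$), hence lie in $B_\lambda^{\fin}(\g;\varepsilon,+)$ as well — alternatively this follows directly from Proposition~\ref{PropBig}. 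The only real difficulty throughout is bookkeeping: keeping track of the powers of $q_r$ thrown off by $(\mathrm{T})^q$ while normal-ordering the monomials and carrying $\varepsilon_r$ correctly through $(\mathrm{C})^q_{\varepsilon,+}$; once each product and each $x\lhd h$ has been brought to normal form, matching it against the stated right-hand sides is mechanical.
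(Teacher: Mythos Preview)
Your proposal is correct and follows essentially the same approach as the paper: the paper's proof simply states that ``all these assertions follow from straightforward computations'' and then, for the membership claim, invokes Proposition~\ref{PropBig} to place $Z_r$ and $W_r$ in $B_\lambda^{\fin}(\g;\varepsilon,+)$ and uses $\lhd$-stability together with the just-computed action formulas to get $X_r$, $Y_r$, $T_r$. Your write-up fills in exactly those computations (including the useful observation that $K_{\alpha_r}^4K_{\omega_r}^{-8}=K_{-4\nu_r}$ with $\nu_r=-\sum_{s\ne r}a_{sr}\omega_s\in P^+$, which is what makes Lemma~\ref{LemFinK}/Proposition~\ref{PropBig} apply to $W_r$), so there is no substantive difference.
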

\begin{proof}
All these assertions follow from straightforward computations.
As the $Z_r = \pi_{\lambda}^{\varepsilon,+}(K_{\omega_r}^{-4})$ and $W_r$ are in
$B_{\lambda}^{\fin}(\g;\varepsilon,+)$ by Proposition~\ref{PropBig}, and the latter is $\lhd$-stable, it
follows from the above computations that also $X_r$, $Y_r$ and $T_r$ are in
$B_{\lambda}^{\fin}(\g;\varepsilon,+)$.
\end{proof}
\begin{Proposition}
\label{PropErg}
The only $\lhd$-invariant elements in $B_{\lambda}^{\fin}(\g;\varepsilon,+)$ are scalar multiples of the
unit element.
\end{Proposition}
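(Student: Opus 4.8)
The plan is to exploit the fact that $B_\lambda^{\fin}(\g;\varepsilon,+)$ is a quotient of $U_q(\g;\varepsilon,+)_{\fin}$, which by Proposition~\ref{PropBig} is generated as a right $U_q(\g)$-module by the elements $K_\omega^{-4}$ with $\omega\in P^+$. A $\lhd$-invariant element of $B_\lambda^{\fin}(\g;\varepsilon,+)$ is a $\lhd$-invariant vector, hence it lies inside a sum of trivial $U_q(\g)$-submodules of the image. So the first step is to understand the image of each $K_\omega^{-4}\lhd U_q(\g)$ under $\pi_\lambda^{\varepsilon,+}$ and, more precisely, to locate its trivial isotypic part. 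By Lemma~\ref{LemHarish} (or rather the construction of the $z_\omega$ in its proof), the trivial component of $K_\omega^{-4}\lhd U_q(\g)$ inside $U_q(\g;\varepsilon,+)$ is spanned by the central element $z_\omega$, whose Harish-Chandra image $\tau_{Z,\varepsilon}(z_\omega)$ is computed in \eqref{EqCharMod}. Since $z_\omega$ is central, $\pi_\lambda^{\varepsilon,+}(z_\omega)$ acts as the scalar $\chi_\lambda(\tau_{Z,\varepsilon}(z_\omega))$ on the irreducible module $V_\lambda^{\varepsilon,+}$, i.e.\ it equals that scalar times the identity in $B_\lambda^{\fin}(\g;\varepsilon,+)$.

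The second step is to argue that the $\lhd$-invariant elements of $B_\lambda^{\fin}(\g;\varepsilon,+)$ are exactly the images of $\lhd$-invariant elements of $U_q(\g;\varepsilon,+)_{\fin}$, so that every such element is a linear combination of the $\pi_\lambda^{\varepsilon,+}(z_\omega)$. This uses that $\pi_\lambda^{\varepsilon,+}$ is a $\lhd$-equivariant algebra surjection and that the module $U_q(\g;\varepsilon,+)_{\fin}$ is a semisimple right $U_q(\g)$-module (being locally finite), so its invariants surject onto the invariants of any equivariant quotient; concretely, if $b\in B_\lambda^{\fin}(\g;\varepsilon,+)$ is invariant, lift it to some $x\in U_q(\g;\varepsilon,+)_{\fin}$, decompose $x$ into isotypic components, and keep only the trivial one, which still maps to $b$. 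Combining with the first step, every $\lhd$-invariant element of $B_\lambda^{\fin}(\g;\varepsilon,+)$ is a scalar multiple of the identity, as claimed. (If one prefers, one can bypass semisimplicity: the invariants of $U_q(\g;\varepsilon,+)_{\fin}$ are spanned by the $z_\omega$ by Lemma~\ref{LemHarish}, and surjectivity of $\pi_\lambda^{\varepsilon,+}$ finishes it, provided one knows the invariants of the image cannot exceed the image of the invariants — which again is the semisimplicity point.)

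The main obstacle I anticipate is precisely this last point: showing that a $\lhd$-invariant vector in the quotient $B_\lambda^{\fin}(\g;\varepsilon,+)$ necessarily comes from a $\lhd$-invariant vector upstairs. This is clean if the right $U_q(\g)$-module $U_q(\g;\varepsilon,+)_{\fin}$ is a direct sum of finite-dimensional modules and each of those is semisimple — the first is its definition, the second is the standard fact that finite-dimensional $U_q(\g)$-modules (for our $q$, not a root of unity) are completely reducible, which is available from the cited literature. Granting that, the isotypic decomposition of $U_q(\g;\varepsilon,+)_{\fin}$ is preserved by the equivariant surjection $\pi_\lambda^{\varepsilon,+}$, the trivial isotypic component maps onto the trivial isotypic component of the image, and we are done. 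An alternative route to the same end, avoiding any appeal to the structure of the image algebra, is to note that $\pi_\lambda^{\varepsilon,+}$ restricted to $\bigoplus_{\omega\in P^+}(K_\omega^{-4}\lhd U_q(\g))$ already surjects onto $B_\lambda^{\fin}(\g;\varepsilon,+)$, that each summand's trivial part is $\C z_\omega$, and that equivariance forces the trivial part of the image to be the image of $\sum_\omega \C z_\omega$, which is contained in $\C\cdot 1$ since each $z_\omega$ is central and acts by a scalar.
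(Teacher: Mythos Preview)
Your argument is correct and follows essentially the same route as the paper: use semisimplicity of $U_q(\g;\varepsilon,+)_{\fin}$ as a right $U_q(\g)$-module to get an equivariant projection onto the invariants, identify these invariants with the center, and conclude that their image under $\pi_\lambda^{\varepsilon,+}$ consists of scalars. The paper's version is slightly leaner in that it invokes Lemma~\ref{LemCenter} directly (invariants $=$ center) rather than the explicit description via the $z_\omega$ and Lemma~\ref{LemHarish}; it also records, in the subsequent Remark, the alternative one-line proof via Schur's lemma applied to the simple module $V_\lambda^{\varepsilon,+}$.
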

\begin{proof}
Assume that $x\in U_q(\g;\varepsilon,+)_{\fin}$ with $\pi_{\lambda}^{\varepsilon,+}(x)$ invariant.
As $U_q(\g;\varepsilon,+)_{\fin}$ is a~semi-simple right $U_q(\g)$-module, we have an equivariant
projection $E$ of $U_q(\g;\varepsilon,+)_{\fin}$ onto the $^*$-algebra of its invariant elements.
The latter is simply the center $\ZZ(U_q(\g;\varepsilon,+))$ of $U_q(\g;\varepsilon,+)$, by
Lemma~\ref{LemCenter}.
As $\pi_{\lambda}^{\varepsilon,+}$ is $\lhd$-equivariant by construction, we deduce that
$\pi_{\lambda}^{\varepsilon,+}(x) = \pi_{\lambda}^{\varepsilon,+}(E(x))$.
But the latter is a~scalar.
\end{proof}
\begin{Remark}
An alternative proof consists in applying Schur's lemma to the simple mo\-du\-le~$V_{\lambda}^{\varepsilon,+}$.
Indeed, $x\in B_{\lambda}^{\fin}(\g;\varepsilon,+)$ is $\lhd$-invariant if and only if it commutes with
all $\pi_{\lambda}^{\varepsilon,+}(y)$ for $y\in U_q(\g;\varepsilon,+)$.
As $V_{\lambda}^{\varepsilon,+}$ is simple, Schur's lemma implies that the algebra of $\lhd$-invariant
elements in $B_{\lambda}^{\fin}(\g;\varepsilon,+)$ forms a~field of countable dimension over $\C$,
hence coincides with~$\C$.
(I~would like the referee for pointing out this approach).
\end{Remark}
\begin{Proposition}
\label{PropCC}
Let $(V,\pi)$ be a~$^*$-representation of $B_{\lambda}^{\fin}(\g;\varepsilon,+)$ on a~pre-Hilbert space.
Then~$\pi$ is bounded.
\end{Proposition}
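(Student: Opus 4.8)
The plan is to deduce boundedness of $\pi$ from the ergodicity of the adjoint action proved in Proposition~\ref{PropErg}, using the orthonormal-basis argument familiar from the theory of ergodic coactions of compact quantum groups.

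Since $B_{\lambda}^{\fin}(\g;\varepsilon,+)$ is by construction $\lhd$-locally finite, every element lies in a finite-dimensional, $\lhd$-invariant, $^*$-closed subspace: for $a\in B_{\lambda}^{\fin}(\g;\varepsilon,+)$ take $M=a\lhd U_q(\g)+(a\lhd U_q(\g))^{*}$, which is finite-dimensional and, since the adjoint action satisfies $(x\lhd u)^{*}=x^{*}\lhd S(u)^{*}$, is $\lhd$-invariant and $^*$-closed. Because the elements $x$ with $\pi(x)$ bounded form a $^*$-subalgebra of $B_{\lambda}^{\fin}(\g;\varepsilon,+)$, it suffices to prove $\pi$ bounded on each such $M$.

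Now $M$ is a finite-dimensional module over $U_q(\g)$, hence unitarizable; fix an invariant inner product on $M$ and an orthonormal basis $m_{1},\dots,m_{d}$. The standard computation --- using the $^*$-compatibility of $\lhd$ together with the unitarity of the coefficient matrix of $M$ in this basis --- shows that $c_{M}:=\sum_{i=1}^{d}m_{i}m_{i}^{*}$ is $\lhd$-invariant, so by Proposition~\ref{PropErg} it is a scalar, and it is non-negative as a sum of elements $m_{i}m_{i}^{*}$. Applying $\pi$ gives $\sum_{i}\pi(m_{i})\pi(m_{i}^{*})=c_{M}\,\id$, and since $\langle\pi(m_{i})\pi(m_{i}^{*})v,v\rangle=\|\pi(m_{i}^{*})v\|^{2}$ for $v\in V$, summing over $i$ yields $\|\pi(m_{i}^{*})v\|^{2}\leq c_{M}\|v\|^{2}$ for all $i$ and all $v$. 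Thus each $\pi(m_{i}^{*})$ is bounded; since $M$ is $^*$-closed the $m_{i}^{*}$ again span $M$, so $\pi$ is bounded on $M$ and hence on all of $B_{\lambda}^{\fin}(\g;\varepsilon,+)$.

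The point requiring the most care is not this estimate but the set-up: one must check that $(B_{\lambda}^{\fin}(\g;\varepsilon,+),\lhd)$ is indeed a $U_q(\g)$-module $^*$-algebra all of whose finite-dimensional submodules are unitarizable, so that an orthonormal basis with unitary coefficient matrix exists and $c_{M}$ is genuinely $\lhd$-invariant. Granting this together with Proposition~\ref{PropErg}, the argument is routine; concretely, for the $\lhd$-submodule generated by $Z_{r}$ it recovers, through the relations of Lemma~\ref{LemSubAlg}, explicit uniform bounds on $Z_{r}$, $X_{r}$, $Y_{r}$ and $T_{r}$.
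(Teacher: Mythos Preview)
Your approach is the same as the paper's: reduce to a finite-dimensional $\lhd$-stable subspace, pick an orthonormal basis with respect to an invariant inner product, observe that a certain quadratic sum is $\lhd$-invariant and hence scalar by Proposition~\ref{PropErg}, and read off the bound. The paper phrases this as: write $b$ as a combination of $b_i$ with $b_i\lhd h=\sum_j\pi_{ij}(h)b_j$ for a finite-dimensional $^*$-representation $\pi$, and note that $\sum_i b_i^{*}b_i$ is invariant.

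There is, however, a slip in the order of your product. For the \emph{right} adjoint action one has $(xy)\lhd h=(x\lhd h_{(1)})(y\lhd h_{(2)})$ and $x^{*}\lhd h=(x\lhd S(h)^{*})^{*}$. If $m_i\lhd h=\sum_j\pi_{ij}(h)m_j$ with $\pi$ a $^*$-representation, a short computation gives
\[
\Big(\sum_i m_i^{*}m_i\Big)\lhd h
=\sum_{j,k}\pi_{jk}\big(S(h_{(1)})h_{(2)}\big)\,m_j^{*}m_k
=\epsilon(h)\sum_j m_j^{*}m_j,
\]
whereas for $\sum_i m_i m_i^{*}$ the coefficient of $m_j m_k^{*}$ is $\pi_{kj}\big(S(h_{(2)})h_{(1)}\big)$, which is \emph{not} $\epsilon(h)\delta_{jk}$ for the non-cocommutative $U_q(\g)$. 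So your $c_M=\sum_i m_i m_i^{*}$ is not $\lhd$-invariant in general; the paper's $\sum_i m_i^{*}m_i$ is the one that works. With that corrected you get $\sum_i\|\pi(m_i)v\|^{2}=c_M\|v\|^{2}$ directly, bounding each $\pi(m_i)$, and since $a$ lies in the span of the $m_i$ you are done---the detour through $^*$-closure of $M$ becomes unnecessary.
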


The proof is based on an argument which is well-known in the setting of compact quantum groups.
\begin{proof}
As $B_{\lambda}^{\fin}(\g;\varepsilon,+)$ consists of locally finite elements, any $b \in
B_{\lambda}^{\fin}(\g;\varepsilon,+)$ can be written as a~finite linear combination of elements $b_i
\in B_{\lambda}^{\fin}(\g;\varepsilon,+)$ for which there exists a~finite-di\-men\-sio\-nal
$^*$-representation $\pi$ of $U_q(\g)$ on a~Hilbert space such that $b_i\lhd h = \sum_j
\pi_{ij}(h)b_j$ for all $h\in U_q(\g)$, the $\pi_{ij}$ being the matrix components with respect to some
orthogonal basis.
An easy computation shows that $\sum_i b_i^*b_i$ is an invariant element, hence a~scalar by
Proposition~\ref{PropErg}.
Hence there exists $C\in \R^+$ such that for any $\xi\in V$ and any $i$, we have $\|\pi(b_i) \xi\|\leq
C\|\xi\|$.
We deduce that the element $\pi(b)$ is bounded.
\end{proof}
\begin{Definition}
A $B_{\lambda}^{\fin}(\g;\varepsilon,+)$-module $V$ is called a~\emph{highest weight module} if there
exists a~cyclic vector $v\in V$ which is annihilated by all $X_r$ and which is an eigenvector for all $Z_r$
with non-zero eigenvalue.
A pre-Hilbert space structure on $V$ is called \emph{invariant} if $\langle x\xi,\eta\rangle = \langle
\xi,x^*\eta\rangle$ for all $\xi,\eta\in V$ and $x\in B_{\lambda}^{\fin}(\g;\varepsilon,+)$.
\end{Definition}

We aim to show that the $B_{\lambda}^{\fin}(\g;\varepsilon,+)$ have only a~finite number of
non-equivalent irreducible highest weight modules.
Of course, each $B_{\lambda}^{\fin}(\g;\varepsilon,+)$ admits at least the highest weight mo\-du\-le~$V_{\lambda}^{\varepsilon,+}$.
Also note that, by an easy argument, each highest weight module decomposes into a~direct sum of joint
weight spaces for the~$Z_r$.
\begin{Proposition}
\label{PropFin}
Each $B_{\lambda}^{\fin}(\g;\varepsilon,+)$ admits only a~finite number of non-equivalent irreducible
highest weight modules.
\end{Proposition}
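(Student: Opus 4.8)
The plan is to reduce the classification of irreducible highest weight modules of $B_{\lambda}^{\fin}(\g;\varepsilon,+)$ to a finite combinatorial problem by analyzing, one simple root at a time, how the generators $X_r$, $Y_r$, $Z_r$ act on joint $Z$-weight spaces. First I would fix an irreducible highest weight module $V$ with highest weight vector $v$, write $Z_r v = \mu_r v$ with $\mu_r \neq 0$, and observe (as noted just before the statement) that $V = \bigoplus V_\nu$ decomposes into joint eigenspaces for the commuting family $\{Z_r\}$. The commutation relations of Lemma~\ref{LemSubAlg} show that each $X_r$ raises the $r$-th $Z$-eigenvalue by a factor $q_r^2$ and each $Y_r$ lowers it by $q_r^{-2}$, while $W_r$ and $T_r$ are central among the $r$-th generators and act as scalars on any joint weight space that is one-dimensional; more importantly $W_r, T_r$ commute with everything in sight, so on an irreducible module they must act as scalars $w_r, t_r$ by Schur's lemma (using Proposition~\ref{PropCC} to know $V$ carries a reasonable operator structure, or just the countable-dimensionality argument as in the Remark after Proposition~\ref{PropErg}).

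Next I would exploit the quadratic relations $X_rY_r = -\varepsilon_r W_r + q_r T_r Z_r - q_r^2 Z_r^2$ and $Y_rX_r = -\varepsilon_r W_r + q_r^{-1}T_r Z_r - q_r^{-2}Z_r^2$. Applying these on a joint $Z$-weight space on which $Z_r$ acts as $z$, one gets that $X_r$ acting on that space is an operator whose "norm square" is governed by the scalar $f_r(z) := -\varepsilon_r w_r + q_r t_r z - q_r^2 z^2$, and similarly $Y_r$ by $f_r(q_r^{-2}z)$ with the appropriate substitution. Starting from the highest weight vector, repeatedly applying $Y_r$ produces a chain of vectors with $Z_r$-eigenvalues $\mu_r, q_r^{-2}\mu_r, q_r^{-4}\mu_r, \dots$; irreducibility forces this chain either to terminate (some $f_r$-value vanishes) or to continue forever. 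Because $w_r, t_r$ are determined by $\mu_r$ together with $\lambda$ via the explicit definitions of $T_r, W_r$ in the Notation (these are polynomial expressions in the $\pi_\lambda^{\varepsilon,+}$ of specific $U_q$-elements, hence their eigenvalues on $v$ are explicit functions of $\lambda$), the equation $f_r(q_r^{-2k}\mu_r) = 0$ is, for each $r$, a polynomial equation in $q_r^{-2k}$ of degree at most $2$, so it has at most two solutions; this bounds the possible $\mu_r$ — hence the possible highest weights — by a finite number. One must also rule in/out the non-terminating chains using the same mechanism that powered Lemma~\ref{LemPosComp}: positivity of the invariant form (when present) forces termination, but even without positivity the center $\ZZ(U_q(\g;\varepsilon,+))$ acts by a single scalar on an irreducible module, and matching this scalar against the Harish-Chandra description in Lemma~\ref{LemHarish} constrains the highest weight to a finite set, exactly as in the proof of Proposition~\ref{PropPosDef}.

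Concretely, I would argue: the center $\ZZ(U_q(\g;\varepsilon,+))$ maps into $B_{\lambda}^{\fin}(\g;\varepsilon,+)$, acts by scalars on any irreducible module by Schur, and these scalars are the values $\tau_{Z,\varepsilon}(z_\omega)$ evaluated at the highest weight $\mu$ (viewing $\mu \in \Char_{\C}(P)$ via the $Z_r$-eigenvalues, since $Z_r = \pi(K_{\omega_r}^{-4})$ pins down $\mu_{\omega_r}^{4}$). By Lemma~\ref{LemHarish} the image of $\tau_{Z,\varepsilon}$ separates $W$-orbits (resp. $W_S$-orbits) of weights in the relevant sense, so two highest weight modules with the same central character have highest weights in the same finite orbit; combined with the degree-$\leq 2$ termination constraint from the $f_r$ above, which forces $\mu_{\omega_r}$ into a finite set for the "non-compact-type" roots, we conclude that only finitely many $\mu$ — hence finitely many non-equivalent irreducible highest weight modules — can occur. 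The main obstacle I expect is handling the roots $r$ with $\varepsilon_r = 0$, where the relevant $\su(2)$-type subalgebra degenerates (the defining relation $(C)^q_{\varepsilon,\eta}$ loses its $\varepsilon_r K_{\alpha_r}^2$ term) and the chain generated by $Y_r$ genuinely need not terminate; there one must instead use that $V$ is still a highest weight module for the full $B_{\lambda}^{\fin}$ and that the central character plus the weights already fixed at the $\varepsilon_r \neq 0$ roots pin down $\mu_{\omega_r}$ for the remaining roots via Lemma~\ref{LemHarish}, much as in the $\omega = \rho + \omega_r$ specialization trick used in Proposition~\ref{PropPosDef}.
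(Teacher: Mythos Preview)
Your proposal contains a genuine error and a real gap, both of which the paper's proof avoids.

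\textbf{The error.} You assert that ``$W_r, T_r$ commute with everything in sight'' and hence act as scalars on an irreducible $B_\lambda^{\fin}$-module by Schur's lemma. But Lemma~\ref{LemSubAlg} only says that $W_r, T_r$ commute with $X_r, Y_r, Z_r, T_r, W_r$ and are invariant under $\lhd X_r^{\pm}$, $\lhd K_\omega$; it says nothing about their behavior under $X_s^{\pm}$ for $s\neq r$. Indeed $W_r, T_r$ are \emph{not} $\lhd$-invariant in general (else they would already be scalars in $B_\lambda^{\fin}$ by Proposition~\ref{PropErg}), so they are not central and need not act as scalars on an irreducible module. Consequently your quantities $w_r,t_r$ are not well-defined numbers, and the ``degree-$\leq 2$ termination constraint'' via $f_r$ collapses: on a genuine joint $Z$-weight space the operators $T_r,W_r$ may act nontrivially, and the chain argument does not yield a polynomial equation for $\mu_r$ alone.

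\textbf{The gap.} Your ``concretely'' paragraph is much closer to what actually works, but it skips the essential step. You write that the central character, evaluated via $\tau_{Z,\varepsilon}$, is $\chi_\mu(\tau_{Z,\varepsilon}(z_\omega))$ where $\mu$ is read off from the $Z_r$-eigenvalues. For this formula to be valid you need the highest weight vector $v$ to be annihilated by all of $U_q(\n^+)_+$, i.e.\ you need $V$ to be a highest weight module for $U_q(\g;\varepsilon,+)$, not merely for $B_\lambda^{\fin}(\g;\varepsilon,+)$. A priori $V$ is only a module for the quotient $B_\lambda^{\fin}$, and the definition of highest weight there only guarantees $X_r v = 0$ for the specific elements $X_r \in B_\lambda^{\fin}$. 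The paper bridges this by invoking Proposition~\ref{PropBig}: since the $Z_r$ act invertibly on $V$ (the highest weight eigenvalues are nonzero and the weight-space decomposition propagates this), one can adjoin $Z_r^{-1}$ and thereby recover the action of the algebra generated by the $K_{\omega_r}^{\pm 4}$ and $K_{\alpha_r}X_r^{\pm}$, which is enough to realize $V$ as (a restriction of) some $V_{\lambda'}^{\varepsilon,+}$. Only then does the Harish-Chandra comparison of Lemma~\ref{LemHarish} apply, yielding that $\lambda$ and $\lambda'$ lie in the same finite $W_S$-orbit, exactly as in your final paragraph. Without this lifting step your argument is circular: you cannot compute the action of $z_\omega$ on $v$ from $\mu$ alone until you know $v$ is a $U_q(\g;\varepsilon,+)$-highest weight vector.

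In short, drop the $T_r,W_r$-scalar argument entirely, and insert the extension step via Proposition~\ref{PropBig} before invoking Lemma~\ref{LemHarish}; then your central-character argument coincides with the paper's proof.
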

\begin{proof}
As the statement does not depend on the $^*$-structure, we may by rescaling restrict to the case that
$\varepsilon_r \in \{0,1\}$ for all $r$ upon allowing $\lambda \in \Char_{\C}(P)$.

By Proposition~\ref{PropBig} and the fact that any highest weight module is semi-simple for the torus part,
it is easily argued that any irreducible highest weight module of $B_{\lambda}^{\fin}(\g;\varepsilon,+)$
is obtained by restriction of a~$U_q(\g;\varepsilon,+)$-module $V_{\lambda'}^{\varepsilon,+}$ for some
$\lambda'\in \Char_{\C}(P)$.
As the center of $U_q(\g;\varepsilon,+)$ acts by the same character on $V_{\lambda}^{\varepsilon,+}$ and
$V_{\lambda'}^{\varepsilon,+}$, we find by Lemma~\ref{LemHarish} that the expression $\sum\limits_{w\in
W} \varepsilon_{\omega-w\omega}q^{-2(w\omega,\rho)}\lambda_{-4w\omega}$ remains the same upon replacing
$\lambda$ by $\lambda'$, for each $\omega\in P^+$.
Writing~$S$ for the set of $r$ with $\varepsilon_r=0$, it follows as in the proof of Lemma~\ref{LemHarish}
that
\begin{gather*}
\sum_{w\in W_S}q^{-2(w\omega,\rho)}\lambda_{-4w\omega}=\sum_{w\in W_S}q^{-2(w\omega,\rho)}
\lambda_{-4w\omega}'
\end{gather*}
for all $\omega\in P^{++}$, the strictly dominant weights.
As (invertible) characters on a~commutative semi-group are linearly independent, and as $P^{++}-P^{++} =
P$, it follows that the functions $\omega \rightarrow q^{-2(\omega,\rho)}\lambda_{-4\omega}$ and $\omega
\rightarrow q^{-2(\omega,\rho)}\lambda_{-4\omega}'$ on $P$ lie in the same $W_S$-orbit.
As the highest weight vector in an irreducible highest weight module is uniquely determined up to a~scalar,
and as the equivalence classes of such highest weight modules are then determined by the associated
eigenvalue of the $Z_r = \pi_{\lambda}^{\varepsilon,+}(K_{\omega_r}^{-4})$, this is sufficient to prove
the proposition.
\end{proof}

Remark that the above proof also gives the upper bound $|W_S|$ for the number of inequivalent highest
weight representations, but of course this estimate is not sharp if one only considers unitarizable
representations.
\begin{Proposition}
\label{PropCCC}
Let $\pi$ be a~$^*$-representation of $B_{\lambda}^{\fin}(\g;\varepsilon,+)$ on a~Hilbert space $\mathscr{H}$.
If $0$ is not in the point-spectrum of any of the $Z_r$, then $\mathscr{H}$ is a~$($possibly infinite$)$
direct sum of completions $(\Hsp_k,\pi_k)$ of unitarizable highest weight modules of
$B^{\fin}_{\lambda}(\g;\varepsilon,+)$.
\end{Proposition}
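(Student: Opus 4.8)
The strategy is to build the highest weight decomposition by spectral/functional-analytic means, using the commutation relations in Lemma~\ref{LemSubAlg} one simple root at a time. First I would observe that each $Z_r$ is a bounded self-adjoint operator on $\Hsp$ by Proposition~\ref{PropCC}, and that the $W_r$, $T_r$ are bounded self-adjoint operators commuting with everything in sight. By the joint spectral theorem for the commuting family $\{Z_r, W_r, T_r\}_r$, decompose $\Hsp$ as a direct integral over the joint spectrum; since $0$ is excluded from the point spectrum of each $Z_r$, on each fibre the $Z_r$ act by nonzero scalars. The relations $X_rZ_r = q_r^2 Z_r X_r$ and $Y_rZ_r = q_r^{-2}Z_rY_r$ then show that $X_r$ shifts the $Z_r$-eigenvalue by a factor $q_r^{-2}$ (and $Y_r = X_r^*$ by $q_r^2$), so the spectrum of $Z_r$ restricted to any cyclic piece is a geometric progression; boundedness of $Z_r$ forces this progression to be bounded above in absolute value, hence to terminate, i.e.\ there is a top eigenvalue, and the corresponding eigenvector $v$ is killed by all $X_r$.

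Next I would promote this local "highest weight vector" picture to the global decomposition. Given such a $v$ with $X_r v = 0$ and $Z_r v = c_r v$, $c_r \neq 0$, the cyclic submodule $B_\lambda^{\fin}(\g;\varepsilon,+)\cdot v$ is by construction a highest weight module; by Proposition~\ref{PropFin} there are only finitely many isomorphism classes, and by Proposition~\ref{PropCC} the representation on its completion is bounded, so one gets a genuine Hilbert subspace $\Hsp_k$. The key point to make the decomposition exhaustive is a maximality/Zorn argument: take a maximal orthogonal family of such highest weight completions; if the orthogonal complement $\Hsp'$ were nonzero, it is again a $^*$-representation with $0$ not in the point spectrum of the $Z_r$ (this is inherited, since $\Hsp'$ is invariant and the $Z_r$ are self-adjoint, so their restrictions have spectral measure supported off $0$ precisely when the point-spectrum condition passes to the invariant subspace — here one must be slightly careful, see below), and the spectral argument of the previous paragraph produces a new highest weight vector in $\Hsp'$, contradicting maximality. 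Hence $\Hsp = \bigoplus_k \Hsp_k$.

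Finally, each $\Hsp_k$ is the completion of a highest weight module carrying an invariant pre-Hilbert structure (the restriction of the inner product of $\Hsp$), so it is unitarizable in the sense of the Definition preceding the statement, and the $\pi_k$ are the corresponding completed representations. One should also record that each $\Hsp_k$ may be further decomposed into irreducible highest weight completions using Proposition~\ref{PropFin} again (finitely many classes) together with the fact that a highest weight module decomposes into $Z_r$-weight spaces, so the algebra acts "triangularly" and invariant subspaces are spanned by weight vectors; this is the same reasoning that guarantees the existence of the maximal proper submodule of a Verma module.

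**The main obstacle.**
The delicate step is the spectral analysis showing that the $Z_r$-spectrum must terminate at the top and that the top eigenvalue is actually attained (a point, not just approximate, eigenvalue), and — relatedly — that the point-spectrum-avoiding hypothesis is inherited by invariant subspaces. The first half is handled by the geometric-progression argument above combined with boundedness, but producing an honest eigenvector rather than an approximate one requires either compactness of $Z_r$ restricted to a cyclic piece or a direct-integral argument identifying fibres where the top of the progression sits; I expect the cleanest route is to note that on a cyclic $^*$-submodule generated by a single vector the operators $Z_r$ have spectrum contained in a set of the form $\{q_r^{2n}c : n \in \Z\}$ with finitely many accumulation behaviours, forcing the relevant spectral projection to be nonzero. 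The inheritance of the point-spectrum hypothesis to $\Hsp'$ is automatic for an \emph{invariant} subspace since the $Z_r$ are self-adjoint and $\Hsp'$ reduces them, so $\sigma_p(Z_r|_{\Hsp'}) \subseteq \sigma_p(Z_r)$; the only subtlety is to make sure the Hilbert subspaces $\Hsp_k$ produced along the way are indeed reducing (invariant under the $^*$-algebra), which holds because they are closures of $^*$-submodules and the representation is bounded.
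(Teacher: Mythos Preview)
Your overall strategy coincides with the paper's: locate a nonzero spectral projection for the commuting tuple $(Z_1,\ldots,Z_l)$, use the $q$-commutation $X_rZ_r=q_r^2Z_rX_r$ together with boundedness of the $Z_r$ to push this projection up until all $X_r$ kill it, and then generate a highest weight submodule from a vector in its range. Where you diverge is in the global architecture: the paper first reduces to the irreducible case by a direct integral decomposition (legitimised by Proposition~\ref{PropFin}) and only then produces the highest weight vector, whereas you try to produce highest weight submodules directly in $\Hsp$ and close off with a Zorn/orthogonal-complement argument. Either organisation is fine in principle, but the paper's order of operations is what makes the ``honest eigenvector'' step go through.

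That step is exactly the gap in your sketch. Once you have a nonzero spectral projection $P_t=\chi_{\prod_r[q_rt_r,t_r]}(Z_1,\ldots,Z_l)$ with $X_rP_t=0$ for all $r$, you still need to show that any $\xi\in P_t\Hsp$ is an actual joint eigenvector of the $Z_r$, not merely in a spectral band. Your suggested route (``on a cyclic piece the spectrum of $Z_r$ is a geometric progression'') is not justified: nothing in the commutation relations by themselves forbids continuous spectrum inside the band. The paper's mechanism is different and uses irreducibility in an essential way: one passes to the subalgebra $B_\lambda^{\fin}(\g;\varepsilon,+)_{\ext}$ generated by the $X_r$, $Y_r$ and $Z_r^{\pm1}$ (available on the dense subspace where all $Z_r$ are bounded away from $0$), notes its triangular decomposition, and then observes that if $\xi$ were \emph{not} a joint eigenvector one could split it by a spectral cut $\chi_{[q_rt_r,a]}(Z_r)$ into two nonzero pieces whose cyclic submodules are orthogonal, contradicting irreducibility. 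Without first reducing to an irreducible summand, this orthogonality argument does not yield a contradiction, which is why your direct Zorn approach stalls at precisely the point you flagged. Incidentally, the $W_r,T_r$ play no role in this proposition; they enter only in the next lemma.
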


\begin{proof}
By a~direct integral decomposition, and using Proposition~\ref{PropFin}, it is sufficient to show that
any such \emph{irreducible} $^*$-representation $\pi$ of $B_{\lambda}^{\fin}(\g;\varepsilon,+)$ on
a~Hilbert space $\mathscr{H}$ is the completion of a~highest weight module with invariant pre-Hilbert space
structure.
We then argue as in~\cite[Section~3]{Mas1}.
Write $\chi_{X}$ for the characteristic function of a~set.
By assumption, there exists $t\in \R^l$ with $t_r\neq 0$ for all $r$ and $P_{t} = \chi_{\prod_r
\lbrack q_rt_r,t_r\rbrack}(\pi(Z_1),\ldots,\pi(Z_l))$ non-zero.
Suppose now that~$r$ is such that $\pi(X_r)P_{t}\neq 0$.
From the commutation relations between the~$X_r$ and the~$Z_s$, we deduce that
$P_{(t_1,\ldots,q_r^{-2}t_r,\ldots,t_l)}\neq 0$.
As the $\pi(Z_r)$ are bounded, this process must necessarily stop.
Hence we may choose $t$ such that $P_{t}\neq 0$ but $\pi(X_r)P_{t}=0$ for all $r$.

Let $V$ be the union of the images of the spectral projections of $(Z_1,\ldots, Z_l)$ corresponding to the
$\prod_r \left(\R\setminus (-\frac{1}{n},\frac{1}{n})\right)$ with $n\in \N$.
As $B_{\lambda}^{\fin}(\g;\varepsilon,+)$ is spanned by elements which skew-commute with the $Z_r$, it
follows that $V$ is a~$B_{\lambda}^{\fin}(\g;\varepsilon,+)$-module on which the $\pi(Z_r)$ are
invertible linear maps.
This entails that the restriction of $\pi$ to $V$ can be extended to a~representation $\widetilde{\pi}$ of
$B^{\fin}(\g;\varepsilon,+)_{\ext}$, the sub$^*$-algebra of $B(\g;\varepsilon,+)$ generated by $X_r$, $Y_r$
and the $Z_r^{\pm 1}$ (which contains $B_{\lambda}^{\fin}(\g;\varepsilon,+)$ by
Proposition~\ref{PropBig}).
Note that this $^*$-algebra admits a~triangular decomposition (in the obvious way with respect to the above
generators).

Pick now a~non-zero $\xi \in P_{t}\mathscr{H}$.
Suppose that $\xi$ were not in the pure point spectrum of some $\pi(Z_r)$.
Then we can find $q_rt_r<a<t_r$ such that $\chi_{\lbrack q_rt_r,a\rbrack}(\pi(Z_r))\xi\neq 0 \neq
\chi_{(a,t_r\rbrack}(\pi(Z_r))\xi$.
However, $\lbrack q_1t_1,t_1\rbrack \times \cdots \times \lbrack q_rt_r,a\rbrack\times \cdots \times
\lbrack q_lt_l,t_l\rbrack \cap \prod_s \lbrack q_s^{2k_s+1}t_s,q_s^{2k_s}t_s\rbrack = \varnothing$ for
all $k_s\in \mathbb{N}$ with at least one $k_s>0$.
From the commutation relations between the $Y_s$ and $Z_{s'}$, and the fact that $\pi(X_s)\xi = 0$ for all
$s$, we deduce that $\chi_{\lbrack q_rt_r,a\rbrack}(\pi(Z_r))\xi$ is orthogonal to the
$B_{\lambda}^{\fin}(\g;\varepsilon,+)_{\ext}$-module spanned by $\chi_{(a,t_r\rbrack}(\pi(Z_r))\xi$.
As $\pi$ is irreducible, this would entail $\chi_{\lbrack q_rt_r,a\rbrack}(\pi(Z_r))\xi=0$.
Having arrived at a~contradiction, we conclude that $\xi$ is a~joint eigenvector of all $\pi(Z_r)$.

As $\xi$ is annihilated by all $\pi(X_r)$ and is a~joint eigenvector of all $\pi(Z_r)$, the module
generated by it is a~highest weight module.
As $\pi$ was irreducible, this module must necessarily be dense in $\mathscr{H}$, and the proposition is proven.
\end{proof}

We now want to consider analytic versions of the $B^{\fin}_{\lambda}(\g;\varepsilon,+)$.
\begin{Definition}
Let $B$ be a~unital $^*$-algebra.
We say that $B$ admits a~\emph{universal $C^*$-envelope} if there exists a~non-trivial unital $C^*$-algebra
$C$ together with a~unital $^*$-homomorphism $\pi_u\colon B\rightarrow C$ of unital $^*$-algebras such that any
$^*$-homomorphism $B\rightarrow D$ with $D$ a~unital $C^*$-algebra factors through $C$.
\end{Definition}

Of course, the above $C^*$-algebra $C$ is then uniquely determined up to isomorphism.
\begin{Definition}
We define $\Pol(\G_+^q)$ to be the Hopf $^*$-algebra inside the dual of $U_q(\g)$ which is spanned by the
matrix coefficients of finite-di\-men\-sio\-nal highest weight representations of $U_q(\g)$ associated to
positive characters.
We define
\begin{gather*}
\alpha_{\lambda}^{\varepsilon,+}\colon  \ B_{\lambda}^{\fin}
(\g;\varepsilon,+)\rightarrow\Pol(\G_+^q)\otimes B_{\lambda}^{\fin}(\g;\varepsilon,+)
\end{gather*}
as the comodule $^*$-algebra structure dual to the module $^*$-algebra structure $\lhd$ by $U_q(\g)$.
\end{Definition}

Note that the latter definition makes sense, since $B_{\lambda}^{\fin}(\g;\varepsilon,+)$ is integrable
as a~right $U_q(\g)$-module.

It is known~\cite{Lev1} that $\Pol(\G_+^q)$ admits a~universal $C^*$-algebraic envelope $C(\G_+^q)$, which
becomes a~compact quantum group in the sense of~\cite{Wor1}.
We will denote by $\varphi_{\G_+^q}$ the invariant state on $C(\G_+^q)$, which is faithful by
co-amenability of $\G_+^q$.
\begin{Lemma}
\label{LemEnv}
Assume that $B_{\lambda}^{\fin}(\g;\varepsilon,+)$ admits at least one $^*$-representation on a~Hilbert
space.
Then $B_{\lambda}^{\fin}(\g;\varepsilon,+)$ admits a~$C^*$-algebraic envelope
$C_{\lambda}(\g;\varepsilon,+)$.
\end{Lemma}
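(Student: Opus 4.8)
The plan is to construct $C_{\lambda}(\g;\varepsilon,+)$ by completing $B_{\lambda}^{\fin}(\g;\varepsilon,+)$ with respect to the supremum over all its Hilbert-space $^*$-representations, and the key point that makes this work is the uniform boundedness furnished by Proposition~\ref{PropCC}. Concretely, for $b\in B_{\lambda}^{\fin}(\g;\varepsilon,+)$ set
\begin{gather*}
\|b\|_u=\sup\big\{\|\pi(b)\|\,\big|\,\pi \text{ a }^*\text{-representation of }B_{\lambda}^{\fin}(\g;\varepsilon,+)\text{ on a Hilbert space}\big\}.
\end{gather*}
First I would check that this supremum is finite for every $b$. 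By Proposition~\ref{PropCC}, any $^*$-representation on a \emph{pre}-Hilbert space is bounded, but to get a \emph{uniform} bound I would revisit its proof: there $b$ is written as a finite sum $\sum_i c_ib_i$ where each $b_i$ generates a finite-dimensional $\lhd$-submodule with orthonormal-type basis $\{b_j\}$, and $\sum_j b_j^*b_j$ is $\lhd$-invariant, hence by Proposition~\ref{PropErg} equals a scalar $C_i\geq 0$ depending only on $b_i$ and not on the representation. Then for any $^*$-representation $\pi$ and unit vector $\xi$ one has $\|\pi(b_j)\xi\|^2\leq\langle\pi(\sum_j b_j^*b_j)\xi,\xi\rangle=C_i$, so $\|\pi(b_i)\|\leq\sqrt{C_i}$ and $\|\pi(b)\|\leq\sum_i|c_i|\sqrt{C_i}$, a bound independent of $\pi$. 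Hence $\|b\|_u<\infty$.

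Next I would observe that $\|\cdot\|_u$ is a $C^*$-seminorm: it is a seminorm and submultiplicative as a supremum of such, and it satisfies $\|b^*b\|_u=\|b\|_u^2$ because each individual $\|\pi(b^*b)\|=\|\pi(b)\|^2$. The hypothesis that $B_{\lambda}^{\fin}(\g;\varepsilon,+)$ admits \emph{at least one} $^*$-representation on a Hilbert space guarantees $\|\cdot\|_u$ is not identically zero; in fact the unit element has $\|1\|_u=1$ in any such representation, so $\|\cdot\|_u$ is a genuine (possibly non-faithful) $C^*$-seminorm with $\|1\|_u=1$. I would then let $N=\{b:\|b\|_u=0\}$, which is a two-sided $^*$-ideal, form the quotient $^*$-algebra $B_{\lambda}^{\fin}(\g;\varepsilon,+)/N$ with the induced norm, and define $C_{\lambda}(\g;\varepsilon,+)$ to be its completion. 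This is a unital $C^*$-algebra, non-trivial since $\|1\|_u=1$, and the composite $\pi_u:B_{\lambda}^{\fin}(\g;\varepsilon,+)\to B_{\lambda}^{\fin}(\g;\varepsilon,+)/N\hookrightarrow C_{\lambda}(\g;\varepsilon,+)$ is a unital $^*$-homomorphism.

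Finally I would verify the universal property. If $\rho:B_{\lambda}^{\fin}(\g;\varepsilon,+)\to D$ is any unital $^*$-homomorphism into a unital $C^*$-algebra $D$, then by the Gelfand--Naimark theorem $D$ embeds in some $B(\mathscr{H})$, so $\rho$ is (up to this embedding) a $^*$-representation on a Hilbert space, whence $\|\rho(b)\|_D\leq\|b\|_u$ for all $b$. Therefore $\rho$ kills $N$ and factors through a norm-decreasing $^*$-homomorphism on $B_{\lambda}^{\fin}(\g;\varepsilon,+)/N$, which extends continuously to $C_{\lambda}(\g;\varepsilon,+)$; this extension is the required factorization. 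Uniqueness of $C_{\lambda}(\g;\varepsilon,+)$ up to isomorphism is automatic from the universal property, as already remarked after the definition. The only genuinely substantive step is the finiteness of $\|\cdot\|_u$, i.e.\ extracting a representation-independent bound; once the argument of Proposition~\ref{PropCC} is seen to produce the constants $C_i=\sum_j b_j^*b_j$ depending only on $b$, everything else is the standard enveloping-$C^*$-algebra formalism.
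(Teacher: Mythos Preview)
Your proof is correct and follows exactly the approach of the paper: the paper's proof simply observes that the argument of Proposition~\ref{PropCC} yields a representation-independent bound $C_b$ for each $b$, so the universal $C^*$-envelope exists by the standard construction, and the hypothesis of at least one $^*$-representation ensures it is non-trivial. You have spelled out these steps in full detail, but there is no difference in substance.
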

\begin{proof}
The universal $C^*$-algebraic envelope of $B_{\lambda}^{\fin}(\g;\varepsilon,+)$ exists for precisely the
same reason as in Proposition~\ref{PropCC}, since for any element $b\in
B_{\lambda}^{\fin}(\g;\varepsilon,+)$ there exists a~universal constant~$C_b$ such that $\|\pi(b)\|\leq
C$ for all $^*$-representations $\pi$ of $B_{\lambda}^{\fin}(\g;\varepsilon,+)$ by bounded operators on
a~Hilbert space.
As $B_{\lambda}^{\fin}(\g;\varepsilon,+)$ admits at least one $^*$-representation, we have
$C_{\lambda}(\g;\varepsilon,+)\neq 0$.
\end{proof}
\begin{Remark}
If $V_{\lambda}^{\varepsilon,+}$ is unitarizable, it is of course clear that we get a~\emph{faithful} map
from $B_{\lambda}^{\fin}(\g;\varepsilon,+)$ into $C_{\lambda}(\g;\varepsilon,+)$.
\end{Remark}
\begin{Lemma}
Let $C_{\lambda}(\g;\varepsilon,+)$ be the universal $C^*$-envelope of
$B_{\lambda}^{\fin}(\g;\varepsilon,+)$, whenever it exists.
Then $\alpha_{\lambda}^{\varepsilon,+}$ induces a~$C^*$-algebraic coaction by $C(\G_+^q)$ on
$C_{\lambda}(\g;\varepsilon,+)$.
\end{Lemma}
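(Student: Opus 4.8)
The plan is to verify that the comodule $^*$-algebra structure $\alpha_\lambda^{\varepsilon,+}$ on $B_\lambda^{\fin}(\g;\varepsilon,+)$ extends continuously to the universal $C^*$-envelope, with target $C(\G_+^q)\otimes C_\lambda(\g;\varepsilon,+)$ (the minimal, equivalently maximal, $C^*$-tensor product, which here coincides since $C(\G_+^q)$ is nuclear). First I would recall that $\alpha_\lambda^{\varepsilon,+}$ is a unital $^*$-homomorphism from $B_\lambda^{\fin}(\g;\varepsilon,+)$ into the algebraic tensor product $\Pol(\G_+^q)\otimes B_\lambda^{\fin}(\g;\varepsilon,+)$, which sits inside $C(\G_+^q)\otimes C_\lambda(\g;\varepsilon,+)$. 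So composing with the inclusion gives a unital $^*$-homomorphism $B_\lambda^{\fin}(\g;\varepsilon,+)\to C(\G_+^q)\otimes C_\lambda(\g;\varepsilon,+)$ into a unital $C^*$-algebra. By the universal property of $C_\lambda(\g;\varepsilon,+)$ (Lemma~\ref{LemEnv}), this factors through a unital $^*$-homomorphism
\begin{gather*}
\widetilde{\alpha}_\lambda^{\varepsilon,+}:\ C_\lambda(\g;\varepsilon,+)\rightarrow C(\G_+^q)\otimes C_\lambda(\g;\varepsilon,+).
\end{gather*}

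Next I would check the coaction axioms. The coassociativity identity $(\Delta_{\G_+^q}\otimes\id)\circ\widetilde{\alpha}=(\id\otimes\widetilde{\alpha})\circ\widetilde{\alpha}$ and the counit/nondegeneracy condition both hold on the dense $^*$-subalgebra $B_\lambda^{\fin}(\g;\varepsilon,+)$ because $\alpha_\lambda^{\varepsilon,+}$ is by construction the coaction dual to the module algebra structure $\lhd$, so it is an honest (algebraic) coaction of the Hopf $^*$-algebra $\Pol(\G_+^q)$; both sides of each identity are $^*$-homomorphisms out of $C_\lambda(\g;\varepsilon,+)$ that agree on a dense set, hence agree everywhere by continuity. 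For the Podle\'s-type density (nondegeneracy) condition, that $\overline{\widetilde{\alpha}(C_\lambda(\g;\varepsilon,+))(C(\G_+^q)\otimes 1)}=C(\G_+^q)\otimes C_\lambda(\g;\varepsilon,+)$, I would note that on the algebraic level $\alpha_\lambda^{\varepsilon,+}(B_\lambda^{\fin})(\Pol(\G_+^q)\otimes 1)$ already spans $\Pol(\G_+^q)\otimes B_\lambda^{\fin}$ — this is the standard consequence of the existence of the antipode/the fact that $\lhd$ makes $B_\lambda^{\fin}$ an integrable $U_q(\g)$-module, exactly as for the translation coaction of a compact quantum group on itself — and taking closures gives the $C^*$-statement.

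The only genuinely non-formal point is making sure the target can be taken to be $C(\G_+^q)\otimes_{\min} C_\lambda(\g;\varepsilon,+)$ rather than some larger completion of the algebraic tensor product; this is where I would invoke nuclearity of $C(\G_+^q)$ (it is a co-amenable compact quantum group algebra, cf.~\cite{Lev1}), so that the algebraic tensor product has a unique $C^*$-norm and the universal property of $C_\lambda(\g;\varepsilon,+)$ applies verbatim with this target. I expect this — pinning down the correct $C^*$-completion of the codomain and checking the density condition at the $C^*$-level — to be the main (though still routine) obstacle; the rest is a direct transport of the algebraic coaction along the universal property, together with density arguments. Altogether this shows $\widetilde{\alpha}_\lambda^{\varepsilon,+}$ is a $C^*$-algebraic coaction of $C(\G_+^q)$ on $C_\lambda(\g;\varepsilon,+)$, as claimed.
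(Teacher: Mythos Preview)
Your argument is correct and follows the same route as the paper's proof: compose $\alpha_\lambda^{\varepsilon,+}$ with the canonical maps into $C(\G_+^q)\otimes C_\lambda(\g;\varepsilon,+)$ to get a $^*$-homomorphism into a $C^*$-algebra, invoke the universal property of $C_\lambda(\g;\varepsilon,+)$ to extend, and then verify the coaction axioms by density --- the paper simply compresses all of this into the phrase ``it is straightforward to argue''. One small wording caveat: the canonical map $B_\lambda^{\fin}(\g;\varepsilon,+)\to C_\lambda(\g;\varepsilon,+)$ need not be injective in general (cf.\ the Remark after Lemma~\ref{LemEnv}), so ``sits inside'' should be read as ``maps to'', but this does not affect the argument.
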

\begin{proof}
The map $\alpha_{\lambda}^{\varepsilon,+}$ gives a~$C^*$-representation of
$B^{\fin}_{\lambda}(\g;\varepsilon,+)$ into $C(\G_+^q)\otimes C_{\lambda}(\g;\varepsilon,+)$, which hence
factors over $C_{\lambda}(\g;\varepsilon,+)$.
It is straightforward to argue that this is a~$C^*$-algebraic coaction.\looseness=1
\end{proof}

As $B_{\lambda}^{\fin}(\g;\varepsilon,+)$ only had scalar multiples of the unit as invariants, it follows
that the coaction~$\alpha_{\lambda}^{\varepsilon,+}$ on $C_{\lambda}(\g;\varepsilon,+)$ is
\emph{ergodic}~\cite{Boc1}, i.e.\
if $\alpha_{\lambda}^{\varepsilon,+}(x) = 1\otimes x$, then $x\in \mathbb{C}1$.
We will write $\varphi_{\lambda}^{\varepsilon,+}$ for the unique invariant state on
$C_{\lambda}(\g;\varepsilon,+)$, so
\begin{gather*}
\big(\varphi_{\G_+^q}\otimes\iota\big)\alpha_{\lambda}^{\varepsilon,+}(x)=\varphi_{\lambda}^{\varepsilon,+}
(x)1,
\qquad
\forall\, x\in C_{\lambda}(\g;\varepsilon,+).
\end{gather*}
As $\varphi_{\G_+^q}$ is faithful, also $\varphi_{\lambda}^{\varepsilon,+}$ is faithful.
\begin{Notation}
We write $\theta_{\lambda,\reg}^{\varepsilon,{+}}$ for the GNS-representation of
$(C_{\lambda}(\g;\varepsilon,\!{+}),\varphi_{\lambda}^{\varepsilon,{+}})$, and $W_{\lambda}(\g;\varepsilon,\!{+})$
for the von Neumann algebraic completion of $C_{\lambda}(\g;\varepsilon,\!{+})$ in this GNS-representation.
\end{Notation}

From Lemma~\ref{LemEnv}, it follows that a~$C^*$-algebraic envelope of
$B_{\lambda}^{\fin}(\g;\varepsilon,+)$ exists if $V_{\lambda}^{\varepsilon,+}$ is unitarizable.
In this case, we can say something explicit about $W_{\lambda}(\g;\varepsilon,+)$.
\begin{Theorem}
\label{TheoFI}
Assume that $V_{\lambda}^{\varepsilon,+}$ is unitarizable.
Then $W_{\lambda}(\g;\varepsilon,+)$ is a~finite direct sum of type $I$ factors.
\end{Theorem}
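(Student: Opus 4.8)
The plan is to show that $W_{\lambda}(\g;\varepsilon,+)$ is finite-dimensional, which immediately gives the claim since a finite-dimensional von Neumann algebra is a finite direct sum of matrix algebras. The key structural input is twofold: first, that the coaction $\alpha_{\lambda}^{\varepsilon,+}$ of the co-amenable compact quantum group $\G_+^q$ on $C_{\lambda}(\g;\varepsilon,+)$ is ergodic with faithful invariant state $\varphi_{\lambda}^{\varepsilon,+}$; second, that $B_{\lambda}^{\fin}(\g;\varepsilon,+)$ is, by construction, a direct sum of finite-dimensional $\G_+^q$-isotypical components, each appearing with finite multiplicity. Indeed, for an ergodic action of a compact quantum group, each spectral (isotypical) subspace of the GNS Hilbert space is finite-dimensional, with multiplicity bounded by the (quantum) dimension of the corresponding irreducible corepresentation. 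So $W_{\lambda}(\g;\varepsilon,+)$, as the weak closure of $B_{\lambda}^{\fin}(\g;\varepsilon,+)$ in the GNS representation, is a direct sum of finitely-appearing finite-dimensional pieces; but this alone only gives that it is an AF-type object, not finite-dimensional.

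To get finite-dimensionality I would instead exploit Propositions~\ref{PropFin}, \ref{PropCC} and~\ref{PropCCC} directly. First I would pass to the extended algebra $B^{\fin}_{\lambda}(\g;\varepsilon,+)_{\ext}$ and observe, via the relations in Lemma~\ref{LemSubAlg}, that each $Z_r$ is a bounded self-adjoint operator whose spectrum (in any $^*$-representation on a Hilbert space) is constrained: the skew-commutation $X_rZ_r = q_r^2 Z_r X_r$ forces the nonzero spectrum of $\pi(Z_r)$ to be a subset of a single geometric $q_r^2$-orbit $\{c\, q_r^{2k}\}$, and boundedness then forces only finitely many such $k$ to occur \emph{on any given highest weight module}. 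Next, by Proposition~\ref{PropCCC}, the GNS representation $\theta_{\lambda,\reg}^{\varepsilon,+}$ decomposes (after checking that $0$ is not in the point spectrum of any $Z_r$ in this representation — which follows from faithfulness of $\varphi_{\lambda}^{\varepsilon,+}$ together with the fact that $Z_r^{-1}$ would have to be densely defined on the regular module) into a direct sum of completions of unitarizable highest weight modules. By Proposition~\ref{PropFin} there are only finitely many isomorphism classes of such modules, and by Proposition~\ref{PropCC} each is a bounded $^*$-representation; so $W_{\lambda}(\g;\varepsilon,+)$ is a direct sum of finitely many type~$I$ factors, one for each isomorphism class of irreducible unitarizable highest weight module that actually appears in the regular representation. (These are type~$I$, not merely finite, because each such factor is generated by the image of $B^{\fin}_{\lambda}$ acting irreducibly on the Hilbert-space completion of a highest weight module, hence is $B(\Hsp_k)$ or a matrix algebra.)

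The main obstacle is the step showing that $0$ lies outside the point spectrum of every $\pi(Z_r)$ in the GNS representation, so that Proposition~\ref{PropCCC} applies. I would argue this by a Følner/averaging argument using the invariant faithful state: if $\xi$ were a nonzero vector with $\theta_{\lambda,\reg}^{\varepsilon,+}(Z_r)\xi = 0$, one can use the comodule structure $\alpha_{\lambda}^{\varepsilon,+}$ and the explicit action $Z_r\lhd X_r^{\pm}$ from Lemma~\ref{LemSubAlg} (which shows $Z_r\lhd X_r^+ = q_r^{1/2}X_r$, so $Z_r$ generates $X_r, Y_r$ under the adjoint action) to conclude that the whole cyclic submodule generated by $\xi$ is annihilated by the subalgebra generated by the $Z_r$, $X_r$, $Y_r$, which together with $T_r, W_r$ and Proposition~\ref{PropBig} generates everything after adjoining $K_{\omega_r}^4$; pairing against the faithful state then forces $\xi = 0$. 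Once that point-spectrum condition is in hand the remaining assembly is routine, and I would close by noting that the number of summands equals the number of unitarizable irreducible highest weight modules appearing, bounded above by $|W_S|$ from the remark following Proposition~\ref{PropFin}.
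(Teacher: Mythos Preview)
Your overall architecture is the same as the paper's: reduce everything to Proposition~\ref{PropCCC}, then invoke Proposition~\ref{PropFin} to bound the number of irreducible highest weight summands, and conclude that $W_{\lambda}(\g;\varepsilon,+)\cong\bigoplus_{k=1}^m B(\Hsp_k)$. That part is fine, and your remark that each summand is $B(\Hsp_k)$ because the algebra acts irreducibly there is correct.

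The genuine gap is the step you flag yourself: showing that $0$ is not in the point spectrum of $\theta_{\lambda,\reg}^{\varepsilon,+}(Z_r)$. Neither of your two suggestions works. The parenthetical ``$Z_r^{-1}$ would have to be densely defined on the regular module'' implicitly uses that $K_{\omega_r}^{4}$ lies in the algebra we are representing, but it does not: $K_{\omega_r}^{4}\notin U_q(\g;\varepsilon,+)_{\fin}$ (only negative powers $K_{\omega}^{-4}$ with $\omega\in P^+$ are $\lhd$-finite, cf.\ Proposition~\ref{PropBig}), so you cannot argue invertibility of $\theta_{\lambda,\reg}^{\varepsilon,+}(Z_r)$ from algebraic invertibility. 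Your second argument, using $Z_r\lhd X_r^+=q_r^{1/2}X_r$ to propagate annihilation, confuses the adjoint action on the algebra with left multiplication on the GNS space. Writing out $Z_r\lhd X_r^+$ explicitly gives $X_r$ as a sum of terms of the form $aZ_rb$ with $a,b$ in the larger algebra $U_q(\g;\varepsilon,+)$; under left multiplication by such an expression, $Z_r\xi=0$ tells you nothing about $X_r\xi$ because the right factor $b$ acts first. Faithfulness of $\varphi_{\lambda}^{\varepsilon,+}$ alone does not close this: a faithful state can perfectly well have a non-invertible element in its GNS representation with nontrivial kernel.

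The paper handles this step quite differently. It realises $\varphi_{\lambda}^{\varepsilon,+}$ concretely by transporting through the coaction and using the Reshetikhin--Yakimov/Soibelman realisation of the Haar state on $\G_+^q$ as a faithful normal state on a tensor product of $\mathscr{L}^{\infty}(SU_{q_r}(2))$'s indexed by a reduced expression for the longest Weyl group element (Lemma~\ref{LemExtvN} is the transfer principle). This reduces the point-spectrum question to an inductive statement (Lemma~\ref{LemNoZero}): for each simple root $r$, the subalgebra $A_r$ generated by $X_r,Y_r,Z_r,T_r,W_r$ is, after fixing the central invariants $T_r,W_r$, an equivariant quotient of a generalised Podle\'s sphere $S^2_{q_r,\tau}$, and for Podle\'s spheres one knows explicitly that $Z_r$ has no kernel in the relevant representation. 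This Podle\'s sphere reduction is the missing idea in your argument.
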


The proof will make use of the following standard lemma.
\begin{Lemma}
\label{LemExtvN}
Let $A$ be a~unital $C^*$-algebra with faithful state $\varphi$.
Let $M$ be the von Neumann algebra closure of $A$ in its GNS-representation with respect to $\varphi$.
Let $\pi$ be a~representation of~$A$ on a~Hilbert space $\mathscr{H}$ such that there exists a~faithful
state $\omega \in B(\mathscr{H})_*$ with $\omega \circ \pi = \varphi$.
Then~$\pi$ extends to a~normal faithful $^*$-representation of~$M$.
\end{Lemma}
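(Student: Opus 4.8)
The plan is to realize the extension through the universal enveloping von Neumann algebra $A^{**}$ and a comparison of central support projections. Recall that every $^*$-representation of $A$ extends uniquely to a normal $^*$-representation of $A^{**}$; write $\overline{\pi}\colon A^{**}\to B(\mathscr{H})$ for the normal extension of the given $\pi$, and let $(\pi_{\varphi},H_{\varphi},\xi_{\varphi})$ be the GNS triple of $\varphi$, with normal extension $\overline{\pi_{\varphi}}\colon A^{**}\to B(H_{\varphi})$. Since $\varphi$ is faithful, $\pi_{\varphi}$ is faithful, so $A$ sits inside $M=\pi_{\varphi}(A)''$ via $\pi_{\varphi}$, and $M$ is canonically identified with $A^{**}z_{\varphi}$, where $z_{\varphi}$ is the central support of $\pi_{\varphi}$, equivalently the smallest central projection with $\overline{\varphi}(z_{\varphi})=1$ (here $\overline{\varphi}$ is the normal extension of $\varphi$ to $A^{**}$). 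Likewise $\pi(A)''\cong A^{**}z_{\pi}$ for the central support $z_{\pi}$ of $\pi$. The whole statement will follow once I show $z_{\pi}=z_{\varphi}$: then $\overline{\pi}\circ\overline{\pi_{\varphi}}^{-1}$ is a well-defined normal $^*$-isomorphism from $M$ onto $\pi(A)''\subseteq B(\mathscr{H})$ sending $\pi_{\varphi}(a)$ to $\pi(a)$, which is exactly a normal faithful extension of $\pi$.

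First I would record the bridge between the two functionals. Put $\omega'=\omega|_{\pi(A)''}$; being the restriction of the faithful normal state $\omega$ to a von Neumann subalgebra of $B(\mathscr{H})$, $\omega'$ is again a faithful normal state. The functionals $\overline{\varphi}$ and $\omega'\circ\overline{\pi}$ are both normal on $A^{**}$ (the latter because $\overline{\pi}$ and $\omega'$ are both normal), and they agree on $A$ since $(\omega'\circ\overline{\pi})(a)=\omega(\pi(a))=\varphi(a)$; as the normal extension of $\varphi|_A$ to $A^{**}$ is unique, $\overline{\varphi}=\omega'\circ\overline{\pi}$ on all of $A^{**}$.

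Next I would extract the two inequalities. From $\overline{\pi}(1-z_{\pi})=0$ and the identity just proved, $\overline{\varphi}(1-z_{\pi})=\omega'(\overline{\pi}(1-z_{\pi}))=0$, so $\overline{\varphi}(z_{\pi})=1$ and minimality of $z_{\varphi}$ yields $z_{\varphi}\le z_{\pi}$. Conversely, $\overline{\pi}(1-z_{\varphi})$ is a positive element of $\pi(A)''$ with $\omega'(\overline{\pi}(1-z_{\varphi}))=\overline{\varphi}(1-z_{\varphi})=0$; faithfulness of $\omega'$ forces $\overline{\pi}(1-z_{\varphi})=0$, and minimality of $z_{\pi}$ yields $z_{\pi}\le z_{\varphi}$. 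Hence $z_{\pi}=z_{\varphi}$, completing the argument along the lines indicated above.

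The only genuinely load-bearing hypothesis is the faithfulness of $\omega$: it is what upgrades the automatic inclusion $z_{\varphi}\le z_{\pi}$ (valid for any representation through which $\varphi$ factors as a normal state) to an equality, and thereby guarantees that the normal extension is faithful rather than merely existent. The main thing to be careful about is therefore the bookkeeping of central supports, together with the standard facts $M\cong A^{**}z_{\varphi}$, $\pi(A)''\cong A^{**}z_{\pi}$, and the uniqueness of normal extensions to $A^{**}$ (Goldstine density of $A$ in $A^{**}$). None of these requires computation, so I expect no substantial obstacle beyond keeping the two directions of the inequality — existence of the extension versus its faithfulness — properly separated.
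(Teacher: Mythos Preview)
Your argument is correct, but it takes a genuinely different route from the paper's two-line proof. The paper works spatially: since $\omega$ is faithful and normal on $B(\mathscr{H})$, its restriction to $\pi(A)''$ is a faithful normal state, so the GNS representation of $\pi(A)''$ on $\mathscr{L}^2(\pi(A),\omega)$ is faithful and normal; the relation $\omega\circ\pi=\varphi$ then makes the map $a\mapsto \pi(a)$ extend to a unitary $U\colon \mathscr{L}^2(A,\varphi)\to \mathscr{L}^2(\pi(A),\omega)$, and $\mathrm{Ad}(U)$ carries $M=\pi_\varphi(A)''$ isomorphically onto $\pi(A)''$. Your approach instead passes through the enveloping von Neumann algebra $A^{**}$ and compares the central supports $z_\varphi$ and $z_\pi$: the identity $\overline{\varphi}=\omega'\circ\overline{\pi}$ gives $z_\varphi\le z_\pi$ automatically and, crucially using faithfulness of $\omega$, also $z_\pi\le z_\varphi$. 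The paper's argument is shorter and more concrete, producing the extending isomorphism via an explicit intertwining unitary. Your argument is more structural and has the virtue of isolating exactly where faithfulness of $\omega$ enters (the inequality $z_\pi\le z_\varphi$, i.e.\ faithfulness of the extension), whereas in the paper's proof this is implicit in the assertion that $\pi(A)''$ acts faithfully on its GNS space for $\omega$. Either way the content is the same; the only caution in your write-up is that ``smallest central projection with $\overline{\varphi}(z_\varphi)=1$'' is the correct description of the central support of the GNS representation precisely because $\xi_\varphi$ is cyclic---you might want to make that one line explicit.
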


\begin{proof}
As $\omega$ is faithful, the bicommutant $\pi(A)''$ is faithfully represented on the GNS-space
$\mathscr{L}^2(\pi(A),\omega)$.
The unitary $U\colon \mathscr{L}^2(A,\varphi)\rightarrow \mathscr{L}^2(\pi(A),\omega)$ induced by $\pi$ then
provides an isomorphism $M\rightarrow \pi(A)''$ extending $\pi$.
\end{proof}
\begin{proof}[Proof of Theorem~\ref{TheoFI}] As mentioned, the $C^*$-algebraic envelope
$C_{\lambda}(\g;\varepsilon,+)$ certainly  exists.

To prove the remaining part of the theorem, we first make some preparations.
Recall that the dual of $U_q(\ssl(2,\C))$ can be identified with $\Pol(SU_{q}(2))$, Woronowicz's twisted
quantum $SU(2)$-group~\cite{Wor2}.
It is well-known that its von Neumann algebraic envelope $\mathscr{L}^{\infty}(SU_q(2))$ is isomorphic to
$B(l^2(\N))\otimes \mathscr{L}(\Z)$, and as such admits a~faithful representation on the Hilbert space
$\Hsp_+ = l^2(\N)\otimes l^2(\Z)$ (cf.~\cite{Lev1,Res1}).

More generally, write $U_{q_r}(\su(2))$ for the sub-Hopf-$^*$-algebra generated by the $X_r^{\pm}$ and
$K_{\alpha_r}^{\pm1}$ inside $U_{q}(\g)$.
By duality, one obtains a~surjective $^*$-homomorphism $\gamma_r\colon  \Pol(\G_+^q)\rightarrow
\Pol(SU_{q_r}(2))$.
This induces a~$^*$-representation of $\Pol(\G_+^q)$ on $\Hsp_+$, which we will denote by the same symbol~$\gamma_r$.
Suppose now that $t=(r_1,\ldots,r_n)$ is an ordered $n$-tuple of elements in~$I$.
Then we obtain a~$^*$-representation of $\Pol(G_+^q)$ on $\Hsp_+^{\otimes n}$ by means of the
$^*$-homomorphism
\begin{gather*}
\gamma_t=(\gamma_{r_1}\otimes\cdots\otimes\gamma_{r_n})\circ\Delta_{\G_+^q}^{(n)},
\end{gather*}
where $\Delta_{\G_+^q}^{(n)}$ denotes the $n$-fold coproduct.
Let now $t_0 = (r_1,\ldots,r_N)$ be such that $w_0 = s_{r_1}\cdots s_{r_N}$ is a~reduced expression for the
longest element in the Weyl group of $\g$.
By~\cite{Res1}, we know that~$\varphi_{\G_+^q}$ can be realized as $\omega \circ \gamma_{t_0}$ for some
faithful normal state $\omega \in B\big(\Hsp_+^{\otimes N}\big)_*$.
By Lemma~\ref{LemExtvN}, this implies that~$\gamma_{t_0}$ can be extended to a~faithful normal
$^*$-representation of $\mathscr{L}^{\infty}(G_+^q)$.

Let us turn now to the proof of the theorem.
The main step is to prove that the point-spectrum of
$\theta_{\lambda,\reg}^{\varepsilon,+}\left(\prod\limits_{r=1}^l Z_r\right)$ does not contain zero.
Indeed, if this is the case, then we can use Proposition~\ref{PropCCC} to conclude that
$\theta_{\lambda,\reg}^{\varepsilon,+}$ decomposes into a~direct integral of highest weight modules for
$B_{\lambda}^{\fin}(\g;\varepsilon,+)$.
It then follows that $W_{\lambda}(\g;\varepsilon,+)$ is simply $\oplus_{k=1}^m B(\Hsp_k)$ with $\Hsp_k$ the
Hilbert space completion of the highest weight modules which appear in
$\theta_{\lambda,\reg}^{\varepsilon,+}$.

To show that $\theta_{\lambda,\reg}^{\varepsilon,+}\left(\prod\limits_{r=1}^l Z_r\right)$ does not contain zero in its
pointspectrum, it is sufficient to show that the operator
$(\gamma_{t_0}\otimes\theta_{\lambda}^{\varepsilon,+})\left(\alpha_{\lambda}^{\varepsilon,+}
\left(\prod\limits_{r=1}^l Z_r\right)\right)$
does not contain zero in its pointspectrum, where $\theta_{\lambda}^{\varepsilon,+}$
is the $^*$-representation of $B_{\lambda}^{\fin}(\g;\varepsilon,+)$ on the Hilbert space completion
$\Hsp_{\lambda}^{\varepsilon,+}$ of $V_{\lambda}^{\varepsilon,+}$.
Indeed, the invariant state $\varphi_{\lambda}^{\varepsilon,+} = (\varphi_{\G_+^q}\otimes
\id)\alpha_{\lambda}^{\varepsilon,+}$ can be extended to a~faithful normal functional on
$B(\Hsp_{\lambda}^{\varepsilon,+}\otimes \mathscr{H_+}^{\otimes N})$, which implies, again by
Lemma~\ref{LemExtvN}, that
$(\gamma_{t_0}\otimes\theta_{\lambda}^{\varepsilon,+})\alpha_{\lambda}^{\varepsilon,+}$ extends to
a~faithful normal $^*$-representation of $W_{\lambda}(\g;\varepsilon,+)$.

Finally, to show that $(\theta_{\lambda}^{\varepsilon,+}\otimes
\gamma_{t_0})\left(\alpha_{\lambda}^{\varepsilon,+}\left(\prod\limits_{r=1}^l Z_r\right)\right)$
does not contain zero in its pointspectrum, we can reason by induction, using the following lemma.
\end{proof}
\begin{Lemma}
\label{LemNoZero}
Let $(V,\pi)$ be an irreducible highest weight module for $B_{\lambda}^{\fin}(\g;\varepsilon,+)$ with an
invariant pre-Hilbert space structure, and let $\Hsp$ be the completion of $V$.
Fix $r\in I$, and put $\pi_r = (\pi\otimes \gamma_r)\alpha_{\lambda}^{\varepsilon,+}$.
Then $\pi_r\left(\prod\limits_{s=1}^l Z_s\right)$ does not contain $0$ in its point spectrum.
\end{Lemma}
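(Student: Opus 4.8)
The plan is to reduce the problem to an explicit operator computation on $\Hsp \otimes \Hsp_+$, exploiting the fact that $\gamma_r$ factors through $\Pol(SU_{q_r}(2))$, so that only the rank-one piece indexed by $r$ interacts nontrivially with the $SU_{q_r}(2)$-factor. First I would use the comodule $^*$-algebra structure $\alpha_{\lambda}^{\varepsilon,+}$ together with the duality between $\lhd$ and $\alpha_{\lambda}^{\varepsilon,+}$ to write $\pi_r$ applied to each of the relevant generators $Z_s$, $X_s$, $Y_s$, $W_s$, $T_s$ in terms of the formulas from Lemma~\ref{LemSubAlg} giving the action of $\lhd X_r^{\pm}$, $\lhd K_{\omega}$ on these elements. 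Concretely, since $\gamma_r$ annihilates all $\lhd X_s^{\pm}$ for $s\neq r$, the operator $\pi_r(Z_s)$ for $s\neq r$ is, up to the known $K_\omega$-weights, essentially $\pi(Z_s)$ tensored with a diagonal (torus) operator on $\Hsp_+$ coming from $K_{\omega_r}^{-4}$, whereas $\pi_r(Z_r)$, $\pi_r(X_r)$, $\pi_r(Y_r)$, $\pi_r(T_r)$, $\pi_r(W_r)$ will mix $\pi(X_r),\pi(Y_r),\pi(Z_r),\pi(T_r),\pi(W_r)$ with the generators $a,a^*,c,c^*$ of $\Pol(SU_{q_r}(2))$ acting on $l^2(\N)\otimes l^2(\Z)$.

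The key step is then to analyze the single operator $\pi_r\!\left(\prod_{s=1}^l Z_s\right)$, which factors as a product of the commuting (they commute because the $Z_s$ skew-commute only with $X_s,Y_s$ and here we are left with the $Z$'s) operators $\pi_r(Z_s)$. For $s\neq r$ each factor is of the form $\pi(Z_s)\otimes D_s$ with $D_s$ an injective positive diagonal operator on $\Hsp_+$ (a power of the $SU_{q_r}(2)$ group-like generator), and $\pi(Z_s)$ is injective on $\Hsp$ because in a highest weight module the $Z_s$ act as invertible operators on the dense highest-weight module by the remark preceding Proposition~\ref{PropFin} — more precisely, $0$ is not in its point spectrum on $V$ since $V$ decomposes into joint $Z_s$-eigenspaces with eigenvalues obtained from the highest weight by multiplication by powers of $q_s$, all nonzero. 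So the only factor that can introduce $0$ into the point spectrum is $\pi_r(Z_r)$, and I would compute it explicitly: using $Z_r\lhd X_r^{\pm}$ and $Z_r\lhd K_\omega = 0$ from Lemma~\ref{LemSubAlg}, $\pi_r(Z_r)$ becomes $\pi(Z_r)\otimes(\text{diagonal part}) + \pi(X_r)\otimes(\cdots) + \pi(Y_r)\otimes(\cdots)$, i.e.\ a Jacobi-type operator on $\Hsp\otimes l^2(\N)\otimes l^2(\Z)$. Suppose $\xi$ is a unit eigenvector with eigenvalue $0$; I would test against a highest weight vector in the $l^2(\N)$-direction (the vector killed by $a^*$ or $c$), where the off-diagonal terms truncate, forcing a recursion that propagates invertibility of $\pi(Z_r)$ up the $l^2(\N)$-tower, yielding a contradiction with square-summability — exactly the mechanism used in the proof of Proposition~\ref{PropCCC}.

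The main obstacle will be bookkeeping: correctly transporting the right $U_q(\g)$-module structure $\lhd$ through the duality into the comodule coaction $\alpha_{\lambda}^{\varepsilon,+}$, then through the $n$-fold coproduct and the surjection $\gamma_r$ onto $\Pol(SU_{q_r}(2))$, and keeping track of all the $q_r$-powers and the commutation relations of Lemma~\ref{LemSubAlg} so that $\pi_r(Z_r)$ is written as a genuine tridiagonal operator with controllable diagonal. Once that normal form is in hand, the spectral argument — ruling out a zero eigenvector by a telescoping/square-summability contradiction as in Proposition~\ref{PropCCC} and the Masuda-type argument cited as~\cite{Mas1} — should be routine. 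I would then remark that iterating this lemma over a reduced word $t_0=(r_1,\ldots,r_N)$, combined with the faithful normal extension of $(\gamma_{t_0}\otimes\theta_\lambda^{\varepsilon,+})\alpha_\lambda^{\varepsilon,+}$ to $W_\lambda(\g;\varepsilon,+)$ established in the proof of Theorem~\ref{TheoFI}, gives that $\theta_{\lambda,\reg}^{\varepsilon,+}\!\left(\prod_{r}Z_r\right)$ has no zero in its point spectrum, completing that proof.
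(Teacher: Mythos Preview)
Your plan is essentially correct but takes a more computational route than the paper. The paper does not attempt to write $\pi_r(Z_r)$ out as an explicit Jacobi operator on $\Hsp\otimes\Hsp_+$ and analyse a recursion by hand. Instead it isolates the sub-$^*$-algebra $A_r$ generated by $X_r,Y_r,Z_r,T_r,W_r$, observes from Lemma~\ref{LemSubAlg} that $T_r$ and $W_r$ are $U_{q_r}(\su(2))$-invariant and central in $A_r$, so that $\pi_r(T_r)=\pi(T_r)\otimes 1$ and $\pi_r(W_r)=\pi(W_r)\otimes 1$; disintegrating over their joint spectrum reduces the problem to the quotients $A_r(w_r,t_r)$, which are recognised as equivariant quotients of generalised Podle\'s spheres $S^2_{q_r,\tau}$. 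The absence of $0$ in the point spectrum of $Z_r$ then comes for free from the known von Neumann envelopes of Podle\'s spheres \cite{Mas1}. In other words, the paper packages the tridiagonal computation you propose into a citation.

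Two small points where your write-up should be tightened. First, for $s\neq r$ one has exactly $\pi_r(Z_s)=\pi(Z_s)\otimes 1$: since $(\omega_s,\alpha_r)=0$ one gets $Z_s\lhd X_r^{\pm}=0$ and $Z_s\lhd K_\omega=Z_s$, so there is no extra diagonal torus factor. Second, the step you call ``forcing a recursion \dots\ yielding a contradiction with square-summability'' is precisely where the paper's approach gains leverage: without first noting that $T_r$ and $W_r$ are invariant (hence act as scalars after disintegration), the coefficients of your tridiagonal operator are themselves operators on $\Hsp$, and the recursion does not close; once you do note this, you have recovered the Podle\'s-sphere presentation and may as well quote \cite{Mas1} rather than redo its representation theory. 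Your approach works, but the structural identification in the paper is what makes the spectral step a one-liner.
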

\begin{proof}
It is easy to see that $\pi_r(Z_s) = 1\otimes \pi(Z_s)$ for $r\neq s$, so that none of these operators have
zero in their point-spectrum.
Let now $A_r$ be the sub-$^*$-algebra of $B_{\lambda}^{\fin}(\g;\varepsilon,+)$ generated by
$Z_r$, $T_r$, $X_r$, $Y_r$, $W_r$, see Lemma~\ref{LemSubAlg}.
By that lemma, $A_r$ is stable under the right action by~$U_{q_r}(\su(2))$, and~$W_r$ and $T_r$ are
invariants in the center of~$A_r$.
By invariance, $\pi_r(W_r) = 1\otimes \pi(W_r)$ and $\pi_r(T_r) = 1\otimes \pi(T_r)$ are bounded
self-adjoint operators.
Hence, to investigate the spectrum of~$\pi_r(Z_r)$, we may by disintegration treat the above operators as
scalars, say~$w_r$ and~$t_r$.
Denote the resulting quotient of $A_r$ by $A_r(w_r,t_r)$.

From Lemma~\ref{LemSubAlg}, we find commutation relations between the generators $X_r$, $Y_r$ and $Z_r$ of
$A_r(w_r,t_r)$, as well as the resulting action of $U_{q_r}(\su(2))$.
It follows that $A_r(w_r,t_r)$ is an equivariant quotient of a~generalized Podle\'{s} sphere
$S_{q_r,\tau}^2$ for $SU_{q_r}(2)$~\cite{Mas1}, for some $\tau$ depending on~$t_r$ and~$w_r$.
Moreover, as $\varphi_{SU_{q_r}(2)}$ can be realized on the Hilbert space $\Hsp_+$, the von Neumann
algebraic envelope of $A_r(w_r,t_r)$ will be isomorphic to the von Neumann algebraic envelope of~$S_{q_r,\tau}^2$, which is equal to $M_n(\C)$, $B(l^2(\N))$ or $B(l^2(\N))\oplus B(l^2(\N))$, depending on
whether $\varepsilon_rw_r$ is positive, zero or negative (cf.~\cite{Mas1}).
In any case, the corresponding image of~$Z_r$ will not contain~0 in its point-spectrum.
\end{proof}

\section[More on the $\varepsilon \in \Char_{\{0,1\}}(Q^+)$-case]{More on the $\boldsymbol{\varepsilon \in
\Char_{\{0,1\}}(Q^+)}$-case}\label{section4}

The case of the non-standard Podle\'{s} spheres already shows that $W_{\lambda}(\g;\varepsilon,+)$ is in
general not a~factor.
However, for $\varepsilon\in \Char_{\{0,1\}}(Q^+)$ and $\lambda\in \Char_{\R^+}(P)$ such that
$W_{\lambda}(\g;\varepsilon,+)$ is well-defined, we show that $W_{\lambda}(\g;\varepsilon,+)$ \emph{does}
become a~type $I$-factor, and we can then also say something more about the invariant integral
$\varphi_{\lambda}^{\varepsilon,+}$ on $W_{\lambda}(\g;\varepsilon,+)$.
\begin{Proposition}
\label{PropFac}
Let $\varepsilon\in \Char_{\{0,1\}}(Q^+)$, $\lambda\in \Char_{\R^+}(P)$, and suppose
$V_{\lambda}^{\varepsilon,+}$ is unitarizable with completion $\Hsp_{\lambda}^{\varepsilon,+}$.
Identify $B_{\lambda}^{\fin}(\g;\varepsilon,+)\subseteq B(\Hsp_{\lambda}^{\varepsilon,+})$.
Then this inclusion completes to a~natural identification $W_{\lambda}(\g;\varepsilon,+) \cong
B(\Hsp_{\lambda}^{\varepsilon,+})$.
\end{Proposition}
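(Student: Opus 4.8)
The plan is to show that the inclusion $B_{\lambda}^{\fin}(\g;\varepsilon,+)\subseteq B(\Hsp_{\lambda}^{\varepsilon,+})$ is already weakly dense, so that its von Neumann algebraic completion in the GNS representation is all of $B(\Hsp_{\lambda}^{\varepsilon,+})$. The key observation is that for $\varepsilon\in\Char_{\{0,1\}}(Q^+)$ the irreducible module $V_{\lambda}^{\varepsilon,+}$ is (by Proposition~\ref{PropPosDef} and the discussion surrounding the generalized Verma module $\Ind_{\varepsilon}(V)$) a \emph{single} highest weight module, with each joint $Z_r$-weight space finite-dimensional and all $Z_r$-eigenvalues nonzero. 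Thus $\theta_{\lambda}^{\varepsilon,+}$ is an irreducible $^*$-representation of $B_{\lambda}^{\fin}(\g;\varepsilon,+)$ on $\Hsp_{\lambda}^{\varepsilon,+}$.

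First I would argue that $\theta_{\lambda}^{\varepsilon,+}$, viewed as a representation of the $C^*$-envelope $C_{\lambda}(\g;\varepsilon,+)$, is (up to unitary equivalence) a subrepresentation of the GNS representation $\theta_{\lambda,\reg}^{\varepsilon,+}$; this is because the vector state on $B(\Hsp_{\lambda}^{\varepsilon,+})$ associated to the highest weight vector $v_{\lambda}^{\varepsilon,+}$ is, by the defining property of the invariant state and ergodicity of $\alpha_{\lambda}^{\varepsilon,+}$, dominated by (a multiple of) $\varphi_{\lambda}^{\varepsilon,+}$, so the GNS space of $\theta_{\lambda,\reg}^{\varepsilon,+}$ contains a copy of $\Hsp_{\lambda}^{\varepsilon,+}$ as an invariant subspace. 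Then I would invoke the machinery of Proposition~\ref{PropCCC}: since $0$ is not in the point spectrum of any $Z_r$ on $\Hsp_{\lambda}^{\varepsilon,+}$ (the $Z_r$ act with a single nonzero eigenvalue on the highest weight line and the spectrum propagates by the skew-commutation $X_rZ_r=q_r^2Z_rX_r$), and since by Proposition~\ref{PropFin} there are only finitely many inequivalent irreducible highest weight modules, any $^*$-representation decomposes as a direct sum of completions of such modules. Combined with the fact that $\theta_{\lambda}^{\varepsilon,+}$ is irreducible, it follows that $W_{\lambda}(\g;\varepsilon,+)$ is a finite direct sum $\oplus_{k=1}^m B(\Hsp_k)$ of type I factors.

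The remaining point is to show $m=1$, i.e.\ that $V_{\lambda}^{\varepsilon,+}$ is the \emph{only} unitarizable irreducible highest weight module contributing. Here I would use that for $\varepsilon\in\Char_{\{0,1\}}(Q^+)$ one has extra rigidity: by Proposition~\ref{PropFin} the distinct highest weight modules are parametrized by the $W_S$-orbit of the function $\omega\mapsto q^{-2(\omega,\rho)}\lambda_{-4\omega}$, where $S=\{r:\varepsilon_r=0\}$, and among these one must single out which are unitarizable; one shows via the commutation relations of Lemma~\ref{LemSubAlg} (comparing the two expressions for $T_r$ and using positivity of the inner product, exactly the kind of argument in Lemma~\ref{LemNoZero} with the $S^2_{q_r,\tau}$ reducing to a single type I factor when $\varepsilon_r w_r\geq 0$) that unitarizability forces the weight into a single allowed value, giving $m=1$. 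Then $W_{\lambda}(\g;\varepsilon,+)\cong B(\Hsp_{\lambda}^{\varepsilon,+})$.

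The main obstacle I anticipate is the uniqueness step ($m=1$): showing that the GNS representation does not pick up additional, spurious unitarizable highest weight modules beyond $V_{\lambda}^{\varepsilon,+}$. Ruling these out requires a careful positivity analysis on each $A_r(w_r,t_r)$ block — establishing that when $\varepsilon_r\in\{0,1\}$ the sign of $\varepsilon_r w_r$ cannot be negative on a unitarizable module, so that each local Podle\'s-type sphere $S^2_{q_r,\tau}$ has von Neumann completion $M_n(\C)$ or $B(l^2(\N))$ rather than the reducible $B(l^2(\N))\oplus B(l^2(\N))$ — and then assembling these local statements, using Proposition~\ref{PropBig} so that the $Z_r$, $X_r$, $Y_r$ generate enough of the algebra, into the global irreducibility of $\theta_{\lambda,\reg}^{\varepsilon,+}$.
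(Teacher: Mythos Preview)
Your overall strategy matches the paper's: invoke Theorem~\ref{TheoFI} to write $W_{\lambda}(\g;\varepsilon,+)=\bigoplus_k B(\Hsp_k)$, then argue that only one summand survives. The local Podle\'s-sphere observation you cite is also the paper's first move in the uniqueness step: when $\varepsilon_r\in\{0,1\}$ one has $\varepsilon_r w_r\geq 0$ (since $W_r$ is a positive operator), so each $A_r(w_r,t_r)$ completes to a \emph{single} type~$I$ factor and the $Z_r$ are positive operators. This forces every contributing highest weight module $V_{\lambda'}^{\varepsilon,+}$ to have $\lambda'\in\Char_{\R^+}(P)$.

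The gap is in how you conclude $\lambda'=\lambda$. Knowing that each local $S^2_{q_r,\tau}$ has irreducible von Neumann completion does \emph{not} assemble into global irreducibility of $\theta_{\lambda,\reg}^{\varepsilon,+}$: nothing in the block-by-block positivity analysis rules out several distinct positive characters $\lambda'$ in the same $W_S$-orbit all giving unitarizable modules with positive $Z_r$-spectrum. The paper closes this with a different argument. Taking $S=\{r:\varepsilon_r=1\}$ (beware: the convention stated in the proof of Proposition~\ref{PropFin} is the opposite and appears to be a slip, since $\varepsilon_{\omega-w\omega}$ survives precisely for $w$ in the subgroup generated by reflections with $\varepsilon_r=1$), one considers the compact sub-$^*$-algebra $\widetilde{U}_q(\kk_{\sss})$ generated by $K_{\alpha_r}^{\pm 1},X_r^{\pm}$ for $r\in S$. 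The cyclic $\widetilde{U}_q(\kk_{\sss})$-module through $v_{\lambda'}^{\varepsilon,+}$ is a unitarizable highest weight module for a compact form, hence finite-dimensional by Lemma~\ref{LemPosComp}. But the restriction of $\lambda'$ to $Q_S$ lies in the dot-action $W_S$-orbit of the restriction of $\lambda$, and a dot-orbit contains at most one weight yielding a finite-dimensional module; this pins down $\lambda'|_{Q_S}=\lambda|_{Q_S}$. Combined with $\lambda'_{\omega_r}=\lambda_{\omega_r}$ for $r\notin S$ (already extracted from the Harish-Chandra identity), one obtains $\lambda'=\lambda$. Your sketch never invokes this compact-subalgebra/dot-action rigidity, and without it the uniqueness step does not go through.
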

\begin{proof}
From the proof of Theorem~\ref{TheoFI} and Lemma~\ref{LemNoZero}, and from the commutation relations in
Lemma~\ref{LemSubAlg}, we get that the $A_r(w_r,t_r)$ appearing in the proof of Lemma~\ref{LemNoZero} can
only be matrix algebras or standard Podle\'{s} spheres.
As the $Z_r$ in the von Neumann algebraic completion of these algebras are always positive operators, it
follows by induction that the components appearing in $W_{\lambda}(\g;\varepsilon,+)$ arise from
restrictions of highest weight modules $V_{\lambda'}^{\varepsilon,+}$ of $U_q(\g;\varepsilon,+)$ with
$\lambda'\in \Char_{\R^+}(P)$.
Our aim is to show that necessarily $\lambda'=\lambda$.

Let $S$ be the set of $r$ with $\varepsilon_r=1$.
Suppose that $\lambda'\in \Char_{\R^+}(P)$ is such that the representation of $U_q(\g;\varepsilon,+)$ on
$V_{\lambda'}^{\varepsilon,+}$ factors over $B_{\lambda}^{\fin}(\g;\varepsilon,+)$.
Suppose that $V_{\lambda'}^{\varepsilon,+}$ then admits an invariant pre-Hilbert space structure as
a~$B_{\lambda}^{\fin}(\g;\varepsilon,+)$-module, hence as a~$U_q(\g;\varepsilon,+)$-module by
Proposition~\ref{PropBig}.
From the proof of Proposition~\ref{PropFin}, we deduce that there exists $w\in W_S$ such that
$q^{(-2w\omega,\rho)}\lambda_{-4w\omega} = q^{(-2\omega,\rho)}\lambda_{-4\omega}'$ for all $\omega\in P$.
In particular, $\lambda_{\omega_r} = \lambda_{\omega_r}'$ for $r\notin S$.

On the other hand, let $\widetilde{U}_q(\kk_{\sss})$ be the subalgebra of $U_q(\g;\varepsilon,+)$ generated
by the~$X_r^{\pm}$ and~$K_{\alpha_r}^{\pm 1}$ with~$r\in S$.
Let $\widetilde{V}_{\lambda}^{\varepsilon,+}$ be the $\widetilde{U}_q(\kk_{\sss})$-module spanned by
$v_{\lambda}^{\varepsilon,+}$, and similarly for $V_{\lambda'}^{\varepsilon,+}$.
Then by Proposition~\ref{PropPosDef}, these are irreducible highest weight modules for $U_q(\kk_{\sss})$
associated to the restrictions of $\lambda$ and $\lambda'$ to the root lattice $Q_S$ of $\kk_{\sss}$.
But as $V_{\lambda'}^{\varepsilon,+}$ admits an invariant pre-Hilbert space structure (and $\kk_{\sss}$ is
compact), it is necessarily finite-di\-men\-sio\-nal.
However, as the restriction of $\lambda'$ lies in the $W_S$-orbit of $\lambda$ (for the so-called
`dot'-action), it is well-known that this can happen only if the restrictions of $\lambda$ and $\lambda'$
to $Q_S$ coincide.
Combined with the observation at the end of the previous paragraph, this forces $\lambda = \lambda'$ inside
$\Char_{\R^+}(P)$.
\end{proof}
\begin{Proposition}
Under the assumptions of Proposition~{\rm \ref{PropFac}}, the invariant state on $W_{\lambda}(\g;\varepsilon,+)$
is given by
\begin{gather*}
\varphi_{\lambda}^{\varepsilon,+}(x)=\frac{\Tr(xZ_{\rho})}{\Tr(Z_{\rho})},
\end{gather*}
where $Z_{\rho} = \prod\limits_{r=1}^l Z_r$.
\end{Proposition}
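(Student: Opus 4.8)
The plan is to combine Proposition~\ref{PropFac} with the invariance of the state to compute its density matrix. By Proposition~\ref{PropFac} we may identify $W_{\lambda}(\g;\varepsilon,+)$ with $B(\Hsp_{\lambda}^{\varepsilon,+})$, on which $\varphi_{\lambda}^{\varepsilon,+}$ is a faithful normal state; hence $\varphi_{\lambda}^{\varepsilon,+}(x)=\Tr(xD)$ for a unique positive, injective, trace-class operator $D$ with $\Tr(D)=1$. Since $Z_{\rho}=\prod_{r=1}^{l}Z_r=\prod_r\pi_{\lambda}^{\varepsilon,+}(K_{\omega_r}^{-4})=\pi_{\lambda}^{\varepsilon,+}(K_{-4\rho})$, the goal is to show $D=Z_{\rho}/\Tr(Z_{\rho})$; note $Z_{\rho}$ is positive and, because its eigenvalues on the weight space $\lambda q^{-\beta/2}$ equal $\lambda_{-4\rho}q^{2(\beta,\rho)}\to 0$, it is at least compact, so once we know $D=cZ_{\rho}$ the trace-class claim follows automatically.

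First I would translate the invariance $(\varphi_{\G_+^q}\otimes\iota)\alpha_{\lambda}^{\varepsilon,+}=\varphi_{\lambda}^{\varepsilon,+}(\cdot)1$ into a module statement: since $\alpha_{\lambda}^{\varepsilon,+}$ is dual to $\lhd$ and $\varphi_{\G_+^q}$ is the Haar state, $(\varphi_{\G_+^q}\otimes\iota)\alpha_{\lambda}^{\varepsilon,+}$ is the equivariant averaging projection $E$ onto $\lhd$-invariants, which by Proposition~\ref{PropErg} sends $x$ to $\varphi_{\lambda}^{\varepsilon,+}(x)1$; equivariance of $E$ and $1\lhd h=\epsilon(h)1$ then give $\varphi_{\lambda}^{\varepsilon,+}(x\lhd h)=\epsilon(h)\varphi_{\lambda}^{\varepsilon,+}(x)$ for all $x\in B_{\lambda}^{\fin}(\g;\varepsilon,+)$ and $h\in U_q(\g)$, where $\epsilon$ is the counit. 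Writing $\pi=\pi_{\lambda}^{\varepsilon,+}$ and using that $\lhd$ is implemented by $\pi$, $\pi(x\lhd h)=\sum_{(h)}\pi(S(h_{(1)}))\pi(x)\pi(h_{(2)})$, cyclicity of the trace rewrites this as $\Tr\big(\pi(x)\big[\sum_{(h)}\pi(h_{(2)})D\pi(S(h_{(1)}))-\epsilon(h)D\big]\big)=0$. As $B_{\lambda}^{\fin}(\g;\varepsilon,+)$ is weak-$*$ dense in $B(\Hsp_{\lambda}^{\varepsilon,+})$ and the bracketed operator is trace-class, we conclude
\begin{gather*}
\sum_{(h)}\pi(h_{(2)})\,D\,\pi(S(h_{(1)}))=\epsilon(h)D,\qquad h\in U_q(\g).
\end{gather*}

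Next I would solve this equation. Taking $h=K_{\omega}$ (group-like, $\epsilon(K_{\omega})=1$) yields $\pi(K_{\omega})D\pi(K_{\omega})^{-1}=D$, so $D$ commutes with $\pi(U(\h))$ and therefore preserves each of the finite-dimensional weight spaces of $V_{\lambda}^{\varepsilon,+}$. Taking $h=X_r^{\pm}$, and using $\Delta(X_r^{\pm})=X_r^{\pm}\otimes K_{\alpha_r}+K_{\alpha_r}^{-1}\otimes X_r^{\pm}$ together with $S(X_r^{\pm})=-K_{\alpha_r}X_r^{\pm}K_{\alpha_r}^{-1}$, a short computation with relation $({\rm T})^q$ gives $\pi(X_r^{\pm})D=q_r^{\pm2}D\,\pi(X_r^{\pm})$. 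On the other hand, because $(\rho,\alpha_r)=\tfrac12(\alpha_r,\alpha_r)$, the element $K_{-4\rho}$ is pivotal ($K_{-4\rho}xK_{4\rho}=S^{-2}(x)$), and a direct check with $({\rm T})^q$ shows that $Z_{\rho}=\pi(K_{-4\rho})$ satisfies exactly the same relations: $[Z_{\rho},\pi(K_{\omega})]=0$ and $\pi(X_r^{\pm})Z_{\rho}=q_r^{\pm2}Z_{\rho}\pi(X_r^{\pm})$. Since $Z_{\rho}$ acts as a positive scalar on each weight space, $A:=DZ_{\rho}^{-1}$ is a well-defined linear endomorphism of the algebraic module $V_{\lambda}^{\varepsilon,+}$, and the matched relations force $A$ to commute with $\pi(K_{\omega})$, $\pi(X_r^{+})$ and $\pi(X_r^{-})$, hence with all of $\pi(U_q(\g;\varepsilon,+))$. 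As $V_{\lambda}^{\varepsilon,+}$ is a simple module of countable dimension over $\C$, Schur's lemma gives $A=c\,\id$ for some $c\in\C$; positivity of $D$ and $Z_{\rho}$ forces $c>0$, so $D=cZ_{\rho}$ on a core, $Z_{\rho}$ is trace-class, and $\Tr(D)=1$ gives $c=1/\Tr(Z_{\rho})$, which is the claimed formula.

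I expect the two points needing care to be: (i) making the passage from comodule invariance to the operator identity for $D$ fully rigorous, i.e.\ justifying the trace manipulation and the weak-$*$ density argument; and (ii) checking that $D$ genuinely restricts to an endomorphism of the \emph{algebraic} highest weight module $V_{\lambda}^{\varepsilon,+}$ — this is exactly where one uses that $D$ commutes with the torus and that the weight spaces are finite-dimensional, so that Schur's lemma is applicable. The $q$-commutator computations with $({\rm T})^q$ for $X_r^{\pm}$, for both $D$ and $Z_{\rho}$, are routine.
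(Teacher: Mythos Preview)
Your argument is correct and takes a genuinely different route from the paper's.  The paper proceeds via modular theory: it first establishes a twisted trace identity $\varphi_{\lambda}^{\varepsilon,+}(xy)=\varphi_{\lambda}^{\varepsilon,+}(y\sigma(x))$ with $\sigma=\Ad(K_{-4\rho})$, obtained from a known property of the projection $E^{\varepsilon,+}$ onto the center (from \cite[Chapter~7]{Jos1}, extended by continuity in $\varepsilon$); it then invokes Boca's general result \cite{Boc1} that the modular automorphism group of the invariant state is diagonalizable on $B_{\lambda}^{\fin}$, identifies $\sigma_t=\Ad(Z_{\rho}^{it})$, and concludes by the fact that on a type~$I$ factor a normal state is determined by its modular group up to a scalar.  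Your approach instead writes $\varphi_{\lambda}^{\varepsilon,+}=\Tr(\,\cdot\,D)$ and solves directly for $D$ from the invariance equation, using Schur's lemma on the simple module $V_{\lambda}^{\varepsilon,+}$.  This is more elementary and self-contained (no appeal to \cite{Jos1} or \cite{Boc1}), at the cost of having to handle the unbounded operators $\pi(X_r^{\pm})$ by hand; the paper's modular-theoretic route absorbs those analytic issues into the general machinery.  Your point~(i) is the only place needing real care: the cleanest fix is to first get $[D,\pi(K_\omega)]=0$ using the \emph{bounded} unitaries implementing the torus part of the coaction (equivalently, $Z_r^{it}$), so that $D$ is weight-space diagonal; after that all the $X_r^{\pm}$ identities live on the algebraic module where the trace manipulations are finite-dimensional and unproblematic.
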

\begin{proof}
Consider the projection $E^{\varepsilon,+}$ of $U_q(\g;\varepsilon,+)_{\fin}$ onto its direct summand
$\ZZ(U_q(\g;\varepsilon,+))$, the space of $\lhd$-invariants.
For $\varepsilon_r>0$ for all $r$, it follows from~\cite[Chapter 7]{Jos1} that $E^{\varepsilon,+}(xy) =
E^{\varepsilon,+}(y\sigma(x))$ for all $x,y\in U_q(\g;\varepsilon,+)_{\fin}$, where $\sigma(x) =
K_{-4\rho}xK_{-4\rho}^{-1}$.
By continuity, this then holds for all $\varepsilon\in \Char_{\R}(Q^+)$.
Clearly $\sigma$ induces an automorphism of $B_{\lambda}^{\fin}(\g;\varepsilon,+)$, which we will denote
by the same symbol.
As $\varphi_{\lambda}^{\varepsilon,+}$ factors over $E^{\varepsilon,+}$, we have as well that
$\varphi_{\lambda}^{\varepsilon,+}(xy) = \varphi_{\lambda}^{\varepsilon}(y\sigma(x))$.

However, by general theory~\cite{Boc1} we know that the modular automorphism group $\sigma_t$ of
$\varphi_{\lambda}^{\varepsilon,+}$ leaves $B_{\lambda}^{\fin}(\g;\varepsilon,+)$ invariant and is
diagonalizable (and hence analytic) on it.
From the above, we can hence conclude that $\sigma_t = \Ad(Z_{\rho}^{it})$ (where we use that $Z_{\rho}$ is
a~positive operator by Proposition~\ref{PropFac}).
As $W_{\lambda}(\g;\varepsilon,+)$ is a~type $I$-factor, again by Proposition~\ref{PropFac}, we know that
$\varphi_{\lambda}^{\varepsilon,+}$ is completely determined by its modular automorphism group up to a~scalar.
Hence we obtain our expression for $\varphi_{\lambda}^{\varepsilon,+}$ as in the statement of the
proposition.
\end{proof}

\appendix

\section{Cogroupoids}
\label{SecCog}

In this section, we recall the notion of cogroupoid due to J.~Bichon~\cite{Bic1,Bic2}
(cf.~also the notion of \emph{face algebra}~\cite{Hay1}).
\begin{Definition}
Let $I$ be an index set, and let $\{H_{ij}\,|\, i,j\in I\}$ be a~collection of $^*$-algebras.
Suppose that for each triple of indices $i,j,k\in I$, we are given a~unital $^*$-homomorphism
\begin{gather*}
\Delta_{ij}^k\colon  \ H_{ij}\rightarrow H_{ik}\otimes H_{kj},
\qquad
h\mapsto h_{(1)ik}\otimes h_{(2)kj}.
\end{gather*}
We call $\big(\{H_{ij}\},\{\Delta_{ij}^k\}\big)$ a~connected \emph{cogroupoid} over the index set $I$ if the
following conditions are satisfied:
\begin{itemize}\itemsep=0pt
\item[$\bullet$](Connectedness) None of the $H_{ij}$ are the zero algebra.
\item[$\bullet$](Coassociativity) For each quadruple $i$, $j$, $k$, $l$ of indices, we have
\begin{gather*}
\big(\id\otimes\Delta_{jl}^k\big)\Delta_{il}^j=\big(\Delta_{ik}^j\otimes\id\big)\Delta_{il}^k,
\end{gather*}
\item[$\bullet$](Counits) There exist unital $^*$-homomorphisms $\epsilon_i\colon H_{ii}\rightarrow \C$ such
that, for all indices~$i$,~$j$,
\begin{gather*}
(\epsilon_i\otimes\id)\Delta_{ij}^i=\id_{H_{ij}}=(\id\otimes\epsilon_j)\Delta_{ij}^j,
\end{gather*}
\item[$\bullet$](Antipodes) There exist anti-homomorphisms $S_{ij}\colon  H_{ij}\rightarrow H_{ji}$ such that,
for all indices~$i$,~$j$ and all $h\in H_{ii}$, we have
\begin{gather*}
S_{ij}(h_{(1)ij})h_{(2)ji}=\epsilon_i(h)=h_{(1)ij}S_{ji}(h_{(2)ji}).
\end{gather*}
\end{itemize}

\end{Definition}

As for Hopf $^*$-algebras, it is easy to show that the $S_{ij}$ are unique, and that $S_{ji}(S_{ij}(h)^*)^*
= h$ for each $h\in H_{ij}$.
Note that each $\big(H_{ii},\Delta_{ii}^i\big)$ defines a~Hopf $^*$-algebra.
\begin{Definition}
\label{DefRA}
Let $\big(H_{ij},\Delta_{ij}^k\big)$ be a~cogroupoid.
The \emph{right adjoint action} (or Miyashita--Ulbrich action) $\lhd$ of $H_{jj}$ on $H_{ij}$ is given by
\begin{gather*}
x\lhd h=S_{ji}(h_{(1)ji})xh_{(2)ij}.
\end{gather*}
\end{Definition}

One easily proves that $\lhd$ defines a~right $H_{jj}$-module $^*$-algebra structure on $H_{ij}$.
The compatibility with the $^*$-structure means that $(x\lhd h)^* = x^*\lhd S_{jj}(h)^*$.
\begin{Lemma}
\label{LemCenter}
The space of $\lhd$-invariant elements in $H_{ij}$ coincides with the center of $H_{ij}$.
\end{Lemma}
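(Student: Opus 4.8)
The statement to prove is Lemma~\ref{LemCenter}: the $\lhd$-invariant elements of $H_{ij}$ coincide with its center. The plan is to unwind the definition $x\lhd h = S_{ji}(h_{(1)ji})\,x\,h_{(2)ij}$ and compare it with the commutator $[x,y]$ for $y\in H_{ii}$ (or rather $y$ in the appropriate component acting by multiplication), using the counit and antipode axioms of a cogroupoid to convert one characterization into the other.

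\textbf{First direction: center $\subseteq$ invariants.} Suppose $x\in H_{ij}$ is central. I would compute
\[
x\lhd h = S_{ji}(h_{(1)ji})\,x\,h_{(2)ij}.
\]
Here the issue is that $S_{ji}(h_{(1)ji})\in H_{ij}$ and $h_{(2)ij}\in H_{ij}$, so centrality of $x$ in $H_{ij}$ lets us move $x$ past these: $x\lhd h = x\,S_{ji}(h_{(1)ji})\,h_{(2)ij}$. Wait — more care is needed, since the two factors lie in $H_{ji}$ and $H_{ij}$ respectively after applying $\Delta_{ii}^j$ to $h\in H_{ii}$; the product $S_{ji}(h_{(1)ji})\,x\,h_{(2)ij}$ takes place entirely in $H_{ij}$, with $S_{ji}(h_{(1)ji})\in H_{ij}$. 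So with $x$ central in $H_{ij}$ we indeed get $x\lhd h = x\cdot\big(S_{ji}(h_{(1)ji})h_{(2)ij}\big)$. Now I apply the antipode axiom: for $h\in H_{ii}$ we have $S_{ji}(h_{(1)ji})h_{(2)ij}=\epsilon_i(h)1$ — actually the axiom as stated reads $S_{ij}(h_{(1)ij})h_{(2)ji}=\epsilon_i(h)$ and $h_{(1)ij}S_{ji}(h_{(2)ji})=\epsilon_i(h)$, living in $H_{ji}$ and $H_{ii}$ respectively; I will need the variant living in $H_{ij}$, which follows by applying $\Delta_{ii}^j$ instead of $\Delta_{ii}^i$ and using coassociativity together with the counit axiom. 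Granting that, $x\lhd h = \epsilon_i(h)x$, so $x$ is $\lhd$-invariant.

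\textbf{Second direction: invariants $\subseteq$ center.} Suppose $x\lhd h=\epsilon_i(h)x$ for all $h\in H_{jj}$; I want $xy=yx$ for all $y\in H_{ij}$. The standard trick for Miyashita--Ulbrich actions is: the invariance condition $S_{ji}(h_{(1)ji})\,x\,h_{(2)ij}=\epsilon_i(h)x$ can be rewritten, by multiplying on the left by $h_{(1)ij}'$ for a suitable further application of comultiplication and collapsing via counit/antipode, into the statement $h'_{(1)}x = x h'_{(2)}$ (an intertwining identity) for all $h'$ in the image of $\Delta$. Then one invokes the fact — which for the concrete cogroupoid $U_q(\g;\varepsilon,\eta)$ one can check directly, but which holds abstractly for connected cogroupoids — that $H_{ij}$ is \emph{generated as an algebra} by elements of the form $S_{ji}(h_{(1)ji})h_{(2)ij}$ together with scalars, i.e. the Galois-type surjectivity: the "canonical map" is surjective. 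Applying the intertwining identity against a presentation of an arbitrary $y\in H_{ij}$ in terms of such elements yields $xy=yx$.

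\textbf{Main obstacle.} The delicate point is not the algebra manipulation but correctly managing the \emph{bookkeeping of the three indices} $i,j,k$: the adjoint action mixes $H_{ij}$, $H_{ji}$, and $H_{jj}$, and one must repeatedly use coassociativity $(\id\otimes\Delta_{jl}^k)\Delta_{il}^j=(\Delta_{ik}^j\otimes\id)\Delta_{il}^k$ to reassociate Sweedler legs before the counit and antipode axioms can be applied in the right component. The cleanest route is probably to prove, as an intermediate lemma, the intertwining identity
\[
(h_{(1)ii})\,x = x\,(\text{something})\quad\Longleftrightarrow\quad x\ \lhd\text{-invariant},
\]
i.e. that $\lhd$-invariance of $x$ is equivalent to $x$ commuting with the image of the "evaluation" map built from $\Delta_{ii}^j$, and then to observe (using the explicit generators, or Bichon's general results on connected cogroupoids which guarantee $H_{ij}$ is an $H_{jj}$-Galois object) that this image generates $H_{ij}$. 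I would cite \cite{Bic2} for the latter structural fact rather than reprove it. Everything else is a routine Sweedler-notation calculation that I would not grind through here.
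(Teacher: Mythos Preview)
Your first direction is essentially the paper's argument, though you have the indices tangled: the action is by $H_{jj}$, not $H_{ii}$, so $h\in H_{jj}$ and the antipode identity you need is exactly the stated one with the roles of $i$ and $j$ swapped, namely $S_{ji}(h_{(1)ji})h_{(2)ij}=\epsilon_j(h)$ for $h\in H_{jj}$. No extra coassociativity gymnastics are required.

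Your second direction, however, goes off the rails. The elements $S_{ji}(h_{(1)ji})h_{(2)ij}$ that you propose as algebra generators of $H_{ij}$ are, by the very antipode axiom you just used, equal to the scalars $\epsilon_j(h)$; so the ``Galois-type generation'' statement as you wrote it is vacuous, and no appeal to \cite{Bic2} will repair it. More to the point, no structural input about Galois objects is needed at all. The paper's argument is a one-line Sweedler trick: for $y\in H_{ij}$, apply $(\Delta_{ii}^j\otimes\id)\Delta_{ij}^i$ to get legs $y_{(1)ij}\otimes y_{(2)ji}\otimes y_{(3)ij}$ and write
\[
xy \;=\; \epsilon_i(y_{(1)ii})\,x\,y_{(2)ij}
   \;=\; y_{(1)ij}\,S_{ji}(y_{(2)ji})\,x\,y_{(3)ij}
   \;=\; y_{(1)ij}\,\epsilon_j(y_{(2)jj})\,x
   \;=\; yx.
\]
The first and last equalities are the counit axiom; the second is the antipode axiom applied to $y_{(1)ii}\in H_{ii}$ (giving a scalar that commutes past $x$); the third is precisely the invariance hypothesis $x\lhd y_{(2)jj}=\epsilon_j(y_{(2)jj})x$. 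The whole proof is this single chain, and the ``main obstacle'' you anticipate~--~delicate reassociations via coassociativity~--~amounts to one application to identify the two triple-leg expansions of $y$.
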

\begin{proof}
(Cf.~\cite[Lemma 2.4]{Jos1}.) If $x\in H_{ij}$ is in the center, clearly $x\lhd h = \varepsilon_j(h)x$ for
all $h\in H_{jj}$, by definition of the antipode.
Conversely, if $S_{ji}(h_{(1)ji})xh_{(2)ij} = \varepsilon_j(h)x$ for all $h\in H_{jj}$, we have for $y\in
H_{ij}$ that $xy = y_{(1)ij}S_{ji}(y_{(2)ji})xy_{(3)ij} = yx$.
\end{proof}

Suppose now that $B$ is a~unital $^*$-algebra, and that for some $i$, $j$ we have a~unital $^*$-homo\-mor\-phism
$\pi\colon  H_{ij}\rightarrow B$.
As the right action of $H_{jj}$ on $H_{ij}$ is inner, it descends to a~right action on~$B$,
\begin{gather*}
b\lhd h=\pi(S_{ji}(h_{(1)ji}))b\pi(h_{(2)ij}).
\end{gather*}
Note that the central elements in $B$ are invariant for the action.

A particular case of cogroupoid can be constructed from a~Hopf algebra $(H,\Delta)$ together with
a~collection of real 2-cocycle functionals $\{\omega_i\,|\, i\in I\}$ on it.
Here we mean by real 2-cocycle functional an element $\omega\in (H\otimes H)^*$ which is convolution
invertible and such that $\omega(1,h) = \varepsilon(h) = \omega(h,1)$ for all $h\in H$, while
$\omega(h^*,k^*) = \overline{\omega(k,h)}$ and
\begin{gather*}
\omega(h_{(1)},k_{(1)})\omega(h_{(2)}k_{(2)},l)=\omega(k_{(1)},l_{(1)})\omega(h,k_{(2)}l_{(2)}),
\qquad
\forall\, h,k,l\in H.
\end{gather*}
Let us write $H_{ij}$ for the vector space $H$ with the new multiplication
\begin{gather*}
m_{ij}\colon  \ H\otimes H\rightarrow H,
\qquad
h\otimes k\mapsto\omega_i(h_{(1)},k_{(1)})h_{(2)}k_{(2)}\omega_j^{-1}(h_{(3)},k_{(3)}),
\end{gather*}
and write $\Delta_{ij}^k$ for the given coproduct $\Delta$ seen as a~map $H_{ij}\rightarrow H_{ik}\otimes
H_{kj}$.
Then the $(H_{ij},\Delta_{ij}^k)$ form a~connected cogroupoid.

\section{Continuous one-parameter families of Lie algebras}
\label{SecLieAlg}

We introduce the real Lie algebras whose quantizations we studied in Section~\ref{section1}.
We refer to standard works as~\cite{Hel1,Hum1,Kna1} for the basic background on Lie algebras and Lie groups.

We keep the notation as in Section~\ref{section1}.
We further write $\{h_{r}\} \subseteq \h$ for the basis dual to $\{\omega_r\}$, and write $\h_{\R}$ for its
real span, which we may identify with the real dual of $\h_{\R}^*$.
We write $\g_c\subseteq \g$ for the compact real form of $\g$, and $^\dag$ for the corresponding
anti-linear anti-automorphism such that $x^{\dag} = -x$ for $x\in \g_c$.

Finally, for each $\alpha\in \Delta^+$, we choose root vectors $X^{\pm}_{\alpha}\in \n^{\pm}$ such that
$(X_{\alpha}^{+})^\dag = X_{\alpha}^-$ for all $\alpha\in \Delta^+$ and $\lbrack X_r^+,X_r^-\rbrack = h_r$
for all $\alpha_r\in \Phi^+$.

Fix $\varepsilon\in \Char_{\R}(Q^+)$.
We can define on $\g$ the linear maps
\begin{gather*}
S_{\varepsilon}^{\pm}\colon \ \g\rightarrow\g\colon
\begin{cases}
X_{\alpha}^{\pm} \mapsto \varepsilon_{\alpha}X_{\alpha}^{\pm},& \alpha\in\Delta^+,
\\
z \mapsto  z,& z\in\h+\n^{\mp}.
\end{cases}
\end{gather*}
\begin{Definition}
We define $\g_{\varepsilon}$ to be the complex vector space
\begin{gather*}
\{(S_{\varepsilon}^+(z),S_{\varepsilon}^-(z))\,|\, z\in\g\}\subseteq\g\oplus\g.
\end{gather*}
\end{Definition}
\begin{Proposition}
The vector space $\g_{\varepsilon}$ is a~Lie $^*$-subalgebra of the direct sum Lie algebra $\g\oplus \g$
equipped with the involution $(w,z)^* = (z^{\dag},w^{\dag})$.
Moreover, $\dim_\C(\g_{\varepsilon}) = \dim_\C(\g)$.
\end{Proposition}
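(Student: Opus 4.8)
The plan is to verify directly that $\g_{\varepsilon}$ is closed under the bracket of $\g\oplus\g$ and under the involution $(w,z)^*=(z^{\dag},w^{\dag})$, and then count dimensions. Since $\g_{\varepsilon}=\{(S_\varepsilon^+(z),S_\varepsilon^-(z))\mid z\in\g\}$ is the image of the injective linear map $z\mapsto(S_\varepsilon^+(z),S_\varepsilon^-(z))$ (injective because each $S_\varepsilon^\pm$ restricts to the identity on $\h$, say, and more to the point the two components together recover $z$), the dimension count $\dim_\C(\g_\varepsilon)=\dim_\C(\g)$ is immediate once we know $\g_\varepsilon$ is a subspace, which it visibly is.

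The real work is bracket-closedness. First I would record the defining property: for a triangular-decomposition-homogeneous $z$, the pair $(S_\varepsilon^+(z),S_\varepsilon^-(z))$ scales the $\n^+$-part by $\varepsilon_\alpha$ in the first leg, the $\n^-$-part by $\varepsilon_\alpha$ in the second leg, and leaves $\h$ alone in both. So it suffices to check, for $z,z'$ ranging over root vectors $X_\alpha^\pm$ and elements of $\h$, that $\bigl([S_\varepsilon^+(z),S_\varepsilon^+(z')],[S_\varepsilon^-(z),S_\varepsilon^-(z')]\bigr)$ again has the form $(S_\varepsilon^+(u),S_\varepsilon^-(u))$ for some $u\in\g$ — and the natural candidate is $u=[z,z']$. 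The key algebraic input is the multiplicativity $\varepsilon\in\Char_\R(Q^+)$: when $\alpha,\beta,\alpha+\beta\in\Delta^+$ we have $\varepsilon_\alpha\varepsilon_\beta=\varepsilon_{\alpha+\beta}$, so $[S_\varepsilon^+(X_\alpha^+),S_\varepsilon^+(X_\beta^+)]=\varepsilon_\alpha\varepsilon_\beta[X_\alpha^+,X_\beta^+]=\varepsilon_{\alpha+\beta}[X_\alpha^+,X_\beta^+]=S_\varepsilon^+([X_\alpha^+,X_\beta^+])$, and likewise in the minus leg; the mixed brackets $[X_\alpha^+,X_\beta^-]$ land in $\h\oplus(\text{root spaces})$ and one checks the scaling bookkeeping case by case (both legs involve $S_\varepsilon^\pm$ applied appropriately, and on $\h$ there is no scaling, so consistency holds). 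Brackets with $\h$ are easy since $\h$ is fixed by both $S_\varepsilon^\pm$ and $[\h,X_\alpha^\pm]\subseteq\C X_\alpha^\pm$. I expect this case analysis to be the main obstacle — not conceptually deep, but one must be careful that the \emph{same} element $u$ works in both legs simultaneously, which is exactly where the monoid-homomorphism property of $\varepsilon$ on $Q^+$ (as opposed to just any scaling) is used.

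For the $^*$-closedness: given $(w,z)=(S_\varepsilon^+(u),S_\varepsilon^-(u))\in\g_\varepsilon$, we must show $(z^{\dag},w^{\dag})=(S_\varepsilon^-(u)^{\dag},S_\varepsilon^+(u)^{\dag})\in\g_\varepsilon$. The candidate witness is $u^{\dag}$: since $^\dag$ swaps $\n^+\leftrightarrow\n^-$ with $(X_\alpha^+)^{\dag}=X_\alpha^-$, fixes $\h$ up to the behaviour $x^\dag=-x$ on $\g_c\cap\h$ but in any case maps $\h$ to $\h$, and since $\varepsilon_\alpha$ is real, one computes $S_\varepsilon^-(u)^{\dag}=S_\varepsilon^+(u^{\dag})$ and $S_\varepsilon^+(u)^{\dag}=S_\varepsilon^-(u^{\dag})$ by comparing the action on each homogeneous component. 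Hence $(z^{\dag},w^{\dag})=(S_\varepsilon^+(u^{\dag}),S_\varepsilon^-(u^{\dag}))\in\g_\varepsilon$. Finally I would note that $(w,z)^*$ as defined is an anti-linear anti-automorphism of $\g\oplus\g$ (routine), so $\g_\varepsilon$ with this restricted involution is a Lie $^*$-subalgebra, completing the proof.
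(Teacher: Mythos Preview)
Your proof is correct and follows essentially the same route as the paper's: identify the spanning set $(\varepsilon_\alpha X_\alpha^+,X_\alpha^+)$, $(X_\alpha^-,\varepsilon_\alpha X_\alpha^-)$, $(h_r,h_r)$, count dimensions, use realness of $\varepsilon$ for $^*$-stability, and invoke the monoid-homomorphism identity $\varepsilon_\alpha\varepsilon_\beta=\varepsilon_{\alpha+\beta}$ for bracket-closure. One small caveat: your ``natural candidate $u=[z,z']$'' is not literally correct for the mixed bracket $[X_\alpha^+,X_\alpha^-]$ (both legs produce $\varepsilon_\alpha h_\alpha$, so the witness is $u=\varepsilon_\alpha[z,z']$), but since you explicitly defer the mixed case to separate bookkeeping this does not affect the validity of the argument.
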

\begin{proof}
From the definition, we see that $\g_{\varepsilon}$ is the linear space generated by elements of the form
$(\varepsilon_{\alpha}X_{\alpha}^+,X_{\alpha}^+)$, $(X_{\alpha}^-,\varepsilon_{\alpha}X_{\alpha}^-)$ and
$(h_r,h_r)$.
Accordingly, $\g_{\varepsilon}$ has dimension $\dim_{\C}(\g)$ and inherits the $^*$-operation from
$\g\oplus \g$.
Since $\varepsilon_{\alpha+\beta}= \varepsilon_{\alpha}\varepsilon_{\beta}$ whenever $\alpha$, $\beta$ and
$\alpha+\beta$ are positive roots, we find that $\g_{\varepsilon}$ is closed under the bracket operation.
\end{proof}

In the following, we consider $\g_{\varepsilon}$ with its $^*$-operation inherited from $\g\oplus \g$.
\begin{Remark}
By rescaling the $X_{\alpha}^{\pm}$, we see that $\g_{\varepsilon} \cong \g_{\eta}$ as Lie $^*$-algebras
whenever $\varepsilon_r = \lambda_r\eta_r$ for certain $\lambda_r>0$.
Hence we may in principle always assume that $\varepsilon \in \{-1,0,1\}^l$.
It is however sometimes convenient to keep the continuous deformation aspect into the game.
In the physics literature, this type of deformation goes by the name of `the contraction method'.
(In low dimensions, it can easily be visualized, cf.~\cite[Chapter 13]{Gil1}.)
\end{Remark}
\begin{Definition}
We define $\g_{\varepsilon}^{\nc} = \{z\in \g_{\varepsilon}\,|\, z^* = -z\}$.
\end{Definition}

Hence $\g_{\varepsilon}^{\nc}$ is a~real Lie algebra with $\g_{\varepsilon}$ as its complexification.
We can also realize $\g^{\nc}_{\varepsilon}$ more conveniently inside $\g$ as follows.
\begin{Proposition}\label{PropIden}
Write
\begin{gather*}
X_{\alpha}^{(\varepsilon)}=X_{\alpha}^+-\varepsilon_{\alpha}X_{\alpha}^-, \qquad
Y_{\alpha}^{(\varepsilon)}=i\big(X_{\alpha}^++\varepsilon_{\alpha}X_{\alpha}^-\big)
\end{gather*}
as elements in $\g$.
Consider the $\R$-linear span of the $X_{\alpha}^{(\varepsilon)}$, $Y_{\alpha}^{(\varepsilon)}$ and $ih_{r}$.
Then this space is closed under the Lie bracket of $\g$, and forms a~real Lie algebra isomorphic to
$\g^{\nc}_{\varepsilon}$.
\end{Proposition}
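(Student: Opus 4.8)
The plan is to realise the $\R$-span in the statement as the image of $\g_\varepsilon^\nc$ under the projection $p\colon\g\oplus\g\to\g$, $(w,z)\mapsto z$, onto the second summand; this $p$ is a homomorphism of (complex) Lie algebras, so once I check it is injective on $\g_\varepsilon^\nc$ the desired closure under the bracket of $\g$ and the isomorphism with $\g_\varepsilon^\nc$ both follow automatically.

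First I would record, exactly as in the proof of the preceding proposition, that $\g_\varepsilon$ has as a $\C$-basis the elements $(\varepsilon_\alpha X_\alpha^+,X_\alpha^+)$ and $(X_\alpha^-,\varepsilon_\alpha X_\alpha^-)$ for $\alpha\in\Delta^+$ together with the $(h_r,h_r)$ for $r\in I$ (these are the images of the basis $X_\alpha^\pm,h_r$ of $\g$ under the injective map $z\mapsto(S_\varepsilon^+z,S_\varepsilon^-z)$). Since $^\dag$ is conjugate-linear with $(X_\alpha^+)^\dag=X_\alpha^-$, hence $(X_\alpha^-)^\dag=X_\alpha^+$, since $h_r^\dag=h_r$ (the $h_r$ lie in the real span $\h_\R$ of the coroots, on which $^\dag$ is the identity), and since the $\varepsilon_\alpha$ are real, the involution $*$ interchanges the two families $(\varepsilon_\alpha X_\alpha^+,X_\alpha^+)$ and $(X_\alpha^-,\varepsilon_\alpha X_\alpha^-)$ and fixes each $(h_r,h_r)$. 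Comparing coefficients in
\[
\zeta=\sum_{\alpha\in\Delta^+}\big(c_\alpha(\varepsilon_\alpha X_\alpha^+,X_\alpha^+)+d_\alpha(X_\alpha^-,\varepsilon_\alpha X_\alpha^-)\big)+(h,h),\qquad c_\alpha,d_\alpha\in\C,\ h\in\h,
\]
one finds $\zeta^*=-\zeta$ if and only if $d_\alpha=-\overline{c_\alpha}$ for all $\alpha$ and $h\in i\h_\R$; this is a description of $\g_\varepsilon^\nc$.

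Applying $p$ to such a $\zeta$, and using that $p$ sends the three families of generators to $X_\alpha^+$, $\varepsilon_\alpha X_\alpha^-$ and $h_r$ respectively, gives
\[
p(\zeta)=\sum_{\alpha\in\Delta^+}\big(c_\alpha X_\alpha^+-\overline{c_\alpha}\,\varepsilon_\alpha X_\alpha^-\big)+h,\qquad h\in i\h_\R.
\]
Writing $c_\alpha=\mu_\alpha+i\nu_\alpha$ with $\mu_\alpha,\nu_\alpha\in\R$, the $\alpha$-summand equals $\mu_\alpha\big(X_\alpha^+-\varepsilon_\alpha X_\alpha^-\big)+\nu_\alpha\,i\big(X_\alpha^++\varepsilon_\alpha X_\alpha^-\big)=\mu_\alpha X_\alpha^{(\varepsilon)}+\nu_\alpha Y_\alpha^{(\varepsilon)}$, and $h$ ranges over the $\R$-span of the $ih_r$; hence $p(\g_\varepsilon^\nc)$ is exactly the $\R$-span of the $X_\alpha^{(\varepsilon)}$, $Y_\alpha^{(\varepsilon)}$ and $ih_r$.

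It remains to see that $p$ is injective on $\g_\varepsilon^\nc$. The kernel of $p$ on $\g_\varepsilon$ is spanned by the $(X_\alpha^-,0)$ with $\varepsilon_\alpha=0$ (take $z=X_\alpha^-$ in the definition of $\g_\varepsilon$), and applying $*$ to a linear combination of these produces a combination of the $(0,X_\alpha^+)$, which equals its own negative only when it vanishes; so $\ker p\cap\g_\varepsilon^\nc=0$. Consequently $p$ restricts to an isomorphism of real Lie algebras from $\g_\varepsilon^\nc$ onto the $\R$-span in the statement, and the latter is closed under the bracket of $\g$ as the image of a Lie subalgebra under a Lie algebra homomorphism. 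I expect the only points needing care to be tracking the conjugate-linearity of $^\dag$ against the reality of the $\varepsilon_\alpha$, the identity $h_r^\dag=h_r$, and the final injectivity check in the degenerate case where some $\varepsilon_\alpha=0$ (where $p$ fails to be injective on all of $\g_\varepsilon$); the remaining verifications are routine.
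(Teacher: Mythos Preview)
Your proof is correct and uses essentially the same idea as the paper, just run in the opposite direction: the paper embeds $\g$ (as a real Lie algebra) into $\g\oplus\g$ via $z\mapsto(z,-z^{\dag})$, whose image is the anti-selfadjoint part, and then asserts that the listed elements are carried to a basis of $\g_\varepsilon^{\nc}$; you instead restrict the second-coordinate projection $p\colon\g\oplus\g\to\g$ to $\g_\varepsilon^{\nc}$ and verify it is an isomorphism onto the stated $\R$-span. Your treatment is more explicit than the paper's ``immediately verified'', in particular the injectivity check when some $\varepsilon_\alpha=0$ (where $p$ fails to be injective on all of $\g_\varepsilon$), which the paper does not address.
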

\begin{proof}
As a~real Lie algebra, one can embed $\g$ inside the direct sum $\g\oplus \g$ by means of the map
\begin{gather*}
\g\rightarrow\g\oplus\g,
\qquad
z\mapsto\big(z,-z^{\dag}\big).
\end{gather*}
Under this identification, it is immediately verified that the elements in the statement of the
proposition get sent to a~basis of $\g_{\varepsilon}^{\R}$.
\end{proof}
\begin{Proposition}
Suppose $\varepsilon_r\neq 0$ for all $r$.
Then $\g_{\varepsilon}^{\nc}$ is a~semi-simple real Lie algebra.

\end{Proposition}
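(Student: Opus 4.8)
The statement to prove is: if $\varepsilon_r \neq 0$ for all $r$, then $\g_{\varepsilon}^{\nc}$ is a semi-simple real Lie algebra. The most efficient route exploits the fact already recorded in the excerpt (see the Remark following the existence proposition for $\g_{\varepsilon}$) that when all $\varepsilon_r \neq 0$ we may, after rescaling the root vectors $X_{\alpha}^{\pm}$ by positive scalars, assume $\varepsilon_r \in \{-1,1\}^l$; rescaling by positive reals is a Lie $^*$-algebra isomorphism and so preserves semi-simplicity of the associated real form $\g_{\varepsilon}^{\nc}$. The plan is therefore to reduce to the sign case and then exhibit $\g_{\varepsilon}$ itself as a semi-simple \emph{complex} Lie algebra, from which $\g_{\varepsilon}^{\nc}$ is semi-simple as a real Lie algebra (a real form of a semi-simple complex Lie algebra is semi-simple, since the Killing form of the real form is the real part of the Killing form of the complexification, hence non-degenerate).

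\textbf{Key steps.} First I would observe that $\g_{\varepsilon} \subseteq \g \oplus \g$ is, by construction, the graph-type subspace $\{(S_{\varepsilon}^+(z), S_{\varepsilon}^-(z)) \mid z \in \g\}$, and that the projection onto either factor is injective on $\g_{\varepsilon}$ (the $\h$-components agree and determine which root vectors appear). So $\g_{\varepsilon}$ is abstractly isomorphic, as a complex vector space carrying a bracket, to $\g$ with a \emph{deformed} bracket; concretely, writing $p_1 \colon \g_\varepsilon \to \g$ for the first projection, the bracket transported to $\g$ reads $[X_\alpha^+, X_\beta^-]_\varepsilon$ with the structure constants twisted by ratios of the $\varepsilon$'s. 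Second, when $\varepsilon \in \{-1,1\}^l$, I claim this deformed algebra is actually \emph{isomorphic} to $\g$: extend $\varepsilon$ to a homomorphism on all of $Q$ (possible since $\varepsilon_r = \pm 1$), pick $c \in \Char_{\C}(Q)$ with $c_\alpha^2 = \varepsilon_\alpha$, and check that $X_\alpha^{\pm} \mapsto c_\alpha^{\pm 1} X_\alpha^{\pm}$, $h_r \mapsto h_r$ is a Lie algebra automorphism intertwining the twisted bracket with the original one — this is the Lie-algebra shadow of the isomorphism~\eqref{Iso}. Hence $\g_\varepsilon \cong \g$ as a complex Lie algebra, so it is simple, a fortiori semi-simple. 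Third, $\g_{\varepsilon}^{\nc}$ is by definition the fixed-point set of the involution $z \mapsto -z^*$, i.e.\ a real form of $\g_\varepsilon$; being a real form of a semi-simple complex Lie algebra it is semi-simple. (Concretely, via Proposition~\ref{PropIden} one can even name it: the $\R$-span of $X_\alpha^{(\varepsilon)}, Y_\alpha^{(\varepsilon)}, ih_r$ inside $\g$, whose complexification is all of $\g$.)

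\textbf{Main obstacle.} The only genuinely non-formal point is verifying that the deformed bracket on $\g$ (equivalently, the bracket on $\g_\varepsilon$) is isomorphic to the undeformed one when $\varepsilon$ takes values in $\{\pm 1\}$. The subtlety is that $\varepsilon$ is a priori only defined on $Q^+$, not on $Q$, so $\varepsilon_\alpha$ for $\alpha \in \Delta^+$ makes sense but one must confirm that the multiplicative rescaling $c_\alpha$ can be chosen consistently across $\Delta^+$ so as to respect all brackets $[X_\alpha^{\pm}, X_\beta^{\pm}]$ (not just the simple-root commutators); this works because when $\alpha, \beta, \alpha+\beta$ are all positive roots one has $\varepsilon_{\alpha+\beta} = \varepsilon_\alpha \varepsilon_\beta$ (already used in the existence proof), so defining $c$ on simple roots and extending multiplicatively is automatically compatible. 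Once this bookkeeping is in place the rest is routine, and I would only sketch it rather than write out every structure-constant identity.
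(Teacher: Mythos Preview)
Your proposal reaches the right conclusion, but you work much harder than needed and there is a small conceptual slip along the way. The paper's proof is one sentence: when every $\varepsilon_r\neq 0$, the first-coordinate projection $p_1\colon \g_\varepsilon\to\g$ is a bijection (since $S_\varepsilon^+$ is then invertible on $\g$), and $p_1$ is automatically a Lie algebra homomorphism because it is the restriction to $\g_\varepsilon$ of the first projection $\g\oplus\g\to\g$. Hence $\g_\varepsilon\cong\g$ as complex Lie algebras directly, with no rescaling or reduction to the sign case required; the semi-simplicity of the real form $\g_\varepsilon^{\nc}$ then follows exactly as you say.

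The slip in your write-up is the assertion that the bracket transported to $\g$ via $p_1$ is ``deformed'' with structure constants twisted by ratios of the $\varepsilon$'s. It is not: since $p_1$ is the restriction of a Lie algebra map, the transported bracket is the \emph{original} bracket on $\g$. What you are really computing is the bracket pulled back along the parametrisation $z\mapsto (S_\varepsilon^+ z,S_\varepsilon^- z)$, which is a different linear identification of $\g_\varepsilon$ with $\g$. Your subsequent square-root rescaling argument then (correctly) undoes this artificial twist, but the whole manoeuvre is unnecessary: you already had the isomorphism $p_1$ in hand. So the ``main obstacle'' you identify is not an obstacle at all, and the reduction to $\varepsilon\in\{-1,1\}^l$ can be dropped.
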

\begin{proof}
In this case, the projection onto the first coordinate of $\g\oplus \g$ provides an isomorphism between
$\g_{\varepsilon}$ and $\g$.
\end{proof}

Hence if $\varepsilon\in \Char_{\{-1,1\}}(Q^+)$, the Lie algebra $\g_{\varepsilon}^{\nc}$ is the real form
of $\g$ corresponding to the involution
\begin{gather*}
(X_r^+)^{*}=\varepsilon_{r}X_r^-,
\qquad
h_r^*=h_r.
\end{gather*}
Accordingly, $\g_{\varepsilon}^{\nc}$ is a~real Lie algebra of equal rank, meaning that we can take
a~Cartan decomposition $\g^{\nc} = \ttt\oplus \p$ with $\ttt$ compact and $i\h_\R \subseteq \ttt$.
Conversely, it is easy to see that any equal rank semi-simple real Lie algebra can be realized as some
$\g_{\varepsilon}^{\nc}$.
We recall that the equal rank semi-simple real Lie algebras are precisely those which admit an irreducible
unitary representation of discrete type.

As an example, let us present the simple equal rank real Lie algebras of type $(A)$ (see~\cite[Chapter
X]{Hel1}).
For $r\in \{0,\ldots,l\}$, choose $\sigma_r \in \{\pm 1\}$ such that $\varepsilon_r =
\sigma_{r-1}\sigma_{r}$ for $r\in \{1,\ldots,l\}$.
Then $\g_{\varepsilon}^{\nc}$ is always of type $(AIII)$, and $\varepsilon$ corresponds to $\su(p,l+1-p)$
with $p$ the number of negative $\sigma_r$.
This precise correspondence between the $\varepsilon$ and the various real Lie algebras is easy to
determine explicitly from the standard descriptions of the Cartan decompositions.

More generally, we obtain from Proposition~\ref{PropIden} the following characterization of which Lie
algebras appear as some $\g_{\varepsilon}$.
\begin{Corollary}
Let $\p=\llll\oplus \uu$ be a~parabolic Lie subalgebra of $\g$ with Levi factor $\llll$ and largest
nilpotent ideal $\uu$.
Let $\llll^{\nc}$ be a~real form of $\llll$ which is of equal rank on each simple summand and compact on
the center.
Then $\llll^{\nc}\oplus \uu\cong \g_{\varepsilon}^{\nc}$ as real Lie algebras for some $\varepsilon\in
\Char_{\R}(Q^+)$, and all~$\g_{\varepsilon}^{\nc}$ arise in this way.
\end{Corollary}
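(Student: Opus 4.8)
The plan is to deduce both directions from Proposition~\ref{PropIden} together with the structure theory of parabolic subalgebras. First I would recall that a parabolic $\p = \llll \oplus \uu$ of $\g$ is, up to conjugacy, a standard parabolic $\p_S$ determined by a subset $S \subseteq \Phi^+$ of simple roots: the Levi factor $\llll_S$ is generated by $\h$ together with the $X_\alpha^{\pm}$ for $\alpha$ in the root subsystem $\Delta_S$ spanned by $S$, while the nilradical $\uu$ is spanned by the $X_\alpha^+$ with $\alpha \in \Delta^+ \setminus \Delta_S$. The semisimple part of $\llll_S$ is the sum of the simple factors attached to the connected components of the Dynkin subdiagram on $S$, and its center contains $\h$ modulo the span of $\Delta_S$.

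Next I would match this against the $\varepsilon$-construction. Take $\varepsilon \in \Char_{\R}(Q^+)$ and let $S = \{r \in I \mid \varepsilon_r \neq 0\}$. Then in the realization of Proposition~\ref{PropIden}, the root vectors $X_\alpha^{(\varepsilon)}, Y_\alpha^{(\varepsilon)}$ with $\alpha$ supported on $S$, together with the $ih_r$, span precisely a real form of $\llll_S$: since $\varepsilon$ is multiplicative and nonzero on $\Delta_S$, the signs $\sgn(\varepsilon_\alpha)$ define on each simple summand of $(\llll_S)_{\sss}$ the equal-rank real form attached to an involution $(X_r^+)^* = \varepsilon_r X_r^-$, $h_r^* = h_r$, exactly as in the discussion preceding the Corollary; on the central part $\h$, the requirement $z^* = -z$ picks out $i\h_\R$, which is compact. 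For $\alpha \in \Delta^+ \setminus \Delta_S$ we have $\varepsilon_\alpha = 0$, so $X_\alpha^{(\varepsilon)} = X_\alpha^+$ and $Y_\alpha^{(\varepsilon)} = iX_\alpha^+$; hence these $\R$-span the full nilradical $\uu$, sitting inside $\g_\varepsilon^{\nc}$ as a subalgebra on which the bracket is unchanged. Closure under the bracket and the identification as a real Lie algebra then show $\g_\varepsilon^{\nc} \cong \llll_S^{\nc} \oplus \uu$, with $\llll_S^{\nc}$ equal-rank on each simple summand and compact on the center — which is the required form.

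For the converse, start from a parabolic $\p = \llll \oplus \uu$ and a real form $\llll^{\nc}$ of the stated type; conjugating, assume $\p = \p_S$. On each simple summand of $\llll_S$ the equal-rank real form corresponds, by the recalled classification, to a choice of signs on the simple roots of that summand, and on the compact center it corresponds to $i\h_\R$; assembling these into a single $\varepsilon \in \Char_{\{-1,0,1\}}(Q^+)$ with $\varepsilon_r = 0$ exactly for $r \notin S$ and $\sgn(\varepsilon_r)$ prescribed by the chosen involution on the summand containing $\alpha_r$, one recovers $\llll^{\nc} \oplus \uu$ as $\g_\varepsilon^{\nc}$ by the computation of the previous paragraph. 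I expect the main obstacle to be purely bookkeeping: making the correspondence between involutions on $\llll_S$ and sign vectors $\varepsilon$ precise across several simple summands — in particular checking that \emph{every} equal-rank real form of a simple Levi summand, not just the split or quasi-split ones, is hit by some sign choice on the simple roots. This is essentially the "equal rank $\Leftrightarrow$ realizable as $\g_\varepsilon^{\nc}$" statement already asserted in the excerpt for simple $\g$, applied summand by summand, so the work is to quote it correctly rather than to prove anything genuinely new.
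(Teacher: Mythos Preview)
Your proposal is correct and is essentially what the paper intends: the paper gives no proof at all beyond the sentence ``More generally, we obtain from Proposition~\ref{PropIden} the following characterization\ldots'', so your argument is precisely the unpacking of that claim. Your identification of the set $S=\{r\mid\varepsilon_r\neq0\}$ with the subset of simple roots defining the standard parabolic, and the observation that for $\alpha\notin\Delta_S$ the generators $X_\alpha^{(\varepsilon)}$, $Y_\alpha^{(\varepsilon)}$ span $\uu$ as a real space while for $\alpha\in\Delta_S$ they produce an equal-rank form of $\llll_S$, is exactly the intended route; the converse direction likewise relies on the already-stated fact that every equal-rank real form of a simple algebra arises as some $\g_\varepsilon^{\nc}$, applied to each simple Levi summand.
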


Our next step is to present $\g_\varepsilon$ by means of generators and relations.
\begin{Definition}
Fix $\varepsilon \in \Char_{\R}(Q^+)$.
We define $\widetilde{\g}_{\varepsilon}$ as a~universal Lie algebra in the following way.
A set of generators is given by a~triple of elements $X_r^{\pm}$ and $H_r$ for each simple root $\alpha_r$.
The relations can be grouped into four parts, which we label as `(H)-condition', `(T)orus action', `(S)erre
relations' and `(C)oupling conditions': for all $r,s \in I$,
\begin{enumerate}\itemsep=0pt
\item[(H)] $\lbrack H_r,H_s\rbrack = 0$,
\item[(T)] $\lbrack H_r,X_{s}^{\pm}\rbrack = \pm a_{rs}
X_{s}^{\pm}$,
\item[(S)] $\ad(X_r^{\pm})^{1-a_{rs}}(X_s^{\pm}) = 0$ when $r\neq s$,
\item[(C)$_\varepsilon$] $\lbrack X_r^+,X_s^-\rbrack = \delta_{rs}\varepsilon_r H_{r}$.
\end{enumerate}
\end{Definition}

It is immediate that $\widetilde{\g}_\varepsilon$ can be endowed with a~$^*$-operation such that $(X_r^+)^*
= X_r^-$ and $H_r^* = H_r$.
\begin{Proposition}
The Lie $^*$-algebras $\widetilde{\g}_\varepsilon$ and $\g_\varepsilon$ are isomorphic.
\end{Proposition}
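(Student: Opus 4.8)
The plan is to exhibit mutually inverse Lie $^*$-algebra homomorphisms between $\widetilde{\g}_\varepsilon$ and $\g_\varepsilon$, using the concrete realization of $\g_\varepsilon$ inside $\g\oplus\g$ from the earlier definition. First I would record that, by the very definition of $\g_\varepsilon$ and the computation in the proof of the dimension proposition, $\g_\varepsilon$ is spanned by the elements $(\varepsilon_\alpha X_\alpha^+, X_\alpha^+)$, $(X_\alpha^-,\varepsilon_\alpha X_\alpha^-)$ for $\alpha\in\Delta^+$ and $(h_r,h_r)$ for $r\in I$, and in particular it contains the simple-root elements
\[
\mathbf{X}_r^+ = \big(\varepsilon_r X_r^+, X_r^+\big),\qquad \mathbf{X}_r^- = \big(X_r^-, \varepsilon_r X_r^-\big),\qquad \mathbf{H}_r = (h_r,h_r).
\]
The core of the argument is to check that these satisfy the defining relations of $\widetilde{\g}_\varepsilon$. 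Relations (H) and (T) are immediate from $[h_r,h_s]=0$ and $[h_r,X_s^\pm]=\pm a_{rs}X_s^\pm$ in $\g$, applied coordinatewise. For (S), one uses that the Serre relations hold for the $X_r^\pm$ in $\g$ and that $\ad$ of a scalar multiple of $X_r^\pm$ still kills $X_s^\pm$ to the $(1-a_{rs})$-th power, so each coordinate vanishes. For (C)$_\varepsilon$, the first coordinate of $[\mathbf{X}_r^+,\mathbf{X}_s^-]$ is $\varepsilon_r[X_r^+,X_s^-] = \varepsilon_r\delta_{rs}h_r$ and the second coordinate is $[X_r^+,\varepsilon_s X_s^-] = \varepsilon_s\delta_{rs}h_r = \delta_{rs}\varepsilon_r h_r$, so indeed $[\mathbf{X}_r^+,\mathbf{X}_s^-] = \delta_{rs}\varepsilon_r \mathbf{H}_r$. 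Thus the universal property of $\widetilde{\g}_\varepsilon$ yields a Lie algebra homomorphism $\Psi\colon \widetilde{\g}_\varepsilon\to\g_\varepsilon$ sending $X_r^\pm\mapsto\mathbf{X}_r^\pm$, $H_r\mapsto\mathbf{H}_r$, and since $(\mathbf{X}_r^+)^* = (X_r^+,\varepsilon_r X_r^+)^{\dag\text{-flip}}$ — more precisely, applying $(w,z)^*=(z^\dag,w^\dag)$ to $(\varepsilon_r X_r^+, X_r^+)$ gives $((X_r^+)^\dag,(\varepsilon_r X_r^+)^\dag) = (X_r^-,\varepsilon_r X_r^-) = \mathbf{X}_r^-$, using $\varepsilon_r\in\R$ — and $\mathbf{H}_r^* = (h_r^\dag,h_r^\dag)=(h_r,h_r)=\mathbf{H}_r$, the map $\Psi$ is a $^*$-homomorphism.

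Next I would show $\Psi$ is bijective. Surjectivity: the image contains all $\mathbf{X}_r^\pm$ and $\mathbf{H}_r$; taking brackets of the $\mathbf{X}_r^\pm$ along a decomposition of any positive root $\alpha=\alpha_{i_1}+\cdots+\alpha_{i_k}$ into simple roots (with all partial sums roots) produces, up to a nonzero scalar, the element $(\varepsilon_\alpha X_\alpha^+, X_\alpha^+)$ — here one invokes exactly the identity $\varepsilon_{\alpha+\beta}=\varepsilon_\alpha\varepsilon_\beta$ used in the proof that $\g_\varepsilon$ is bracket-closed, so the two coordinates scale consistently — and similarly for the negative side; hence the image is all of $\g_\varepsilon$. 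For injectivity I would compare dimensions: $\dim_\C\g_\varepsilon=\dim_\C\g$ by the earlier proposition, so it suffices to know $\dim_\C\widetilde{\g}_\varepsilon\le\dim_\C\g$. This follows from a standard PBW/Serre-type argument: the relations (H), (T), (S) already force $\widetilde{\g}_\varepsilon$, regardless of $\varepsilon$, to be a quotient of the Lie algebra with Chevalley–Serre presentation having the $H_r$ central-torus part unchanged, so its dimension is at most $\dim\g$; alternatively, and more cleanly, I would note that the rescaling $X_r^\pm\mapsto \lambda_r X_r^\pm$, $H_r\mapsto H_r$ (with suitable positive or complex $\lambda_r$ when $\varepsilon_r\ne 0$) matches the presentation of $\widetilde{\g}_\varepsilon$ with that of $\g$ directly, and the degenerate cases $\varepsilon_r=0$ only make relation (C)$_\varepsilon$ weaker, so the dimension cannot increase. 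Combining surjectivity with $\dim\widetilde{\g}_\varepsilon\le\dim\g=\dim\g_\varepsilon$ gives that $\Psi$ is an isomorphism.

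\textbf{The main obstacle} I anticipate is the injectivity/dimension bound for $\widetilde{\g}_\varepsilon$: one must argue that imposing only relations (H), (T), (S), (C)$_\varepsilon$ does not produce something larger than $\g$, and the subtlety is that when some $\varepsilon_r=0$ the coupling relation degenerates, so the usual Gabber–Kac / Serre argument that the Serre presentation gives exactly $\g$ must be adapted. The clean way around this is to observe that $\widetilde{\g}_0$ (all $\varepsilon_r=0$) is the maximal such algebra — it surjects onto every $\widetilde{\g}_\varepsilon$ — and that $\widetilde{\g}_0$ is, by the standard theory of the positive/negative Borel plus abelian Cartan (cf.\ the structure of $U_q(\g;0,+)$ referenced in Section~\ref{section1}), of dimension exactly $2|\Delta^+|+l=\dim\g$; then $\dim\widetilde{\g}_\varepsilon\le\dim\widetilde{\g}_0=\dim\g$ for all $\varepsilon$, which closes the argument. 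Everything else — checking the relations, the $^*$-compatibility, surjectivity via root-string brackets — is routine coordinatewise computation in $\g\oplus\g$.
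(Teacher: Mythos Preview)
Your overall strategy matches the paper's: define the map $\Psi$ on generators, verify the relations and $^*$-compatibility, note surjectivity, and then bound $\dim_\C\widetilde{\g}_\varepsilon$ from above by $\dim_\C\g$ to conclude injectivity. The construction of $\Psi$ and the verification of (H), (T), (S), (C)$_\varepsilon$ and of the $^*$-structure are all fine.

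The gap is in your injectivity step. The claim that $\widetilde{\g}_0$ surjects onto every $\widetilde{\g}_\varepsilon$ is false: a Lie algebra homomorphism $\widetilde{\g}_0\to\widetilde{\g}_\varepsilon$ sending generators to generators would force $0=[X_r^+,X_r^-]\mapsto \varepsilon_r H_r$, hence $H_r=0$ in $\widetilde{\g}_\varepsilon$ whenever $\varepsilon_r\neq 0$, a contradiction. Likewise the remark that setting some $\varepsilon_r=0$ ``only makes (C)$_\varepsilon$ weaker, so the dimension cannot increase'' is backwards: the relation $[X_r^+,X_r^-]=0$ is not a consequence of $[X_r^+,X_r^-]=\varepsilon_r H_r$ for $\varepsilon_r\neq 0$, and in a family of presentations the dimension is upper semicontinuous, so specialization can only make it jump \emph{up}. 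Thus your comparison with $\widetilde{\g}_0$ does not give the needed inequality.

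The fix is exactly what the paper does, and in fact what you implicitly do when you compute $\dim\widetilde{\g}_0$: argue directly, for \emph{arbitrary} $\varepsilon$, that every element of $\widetilde{\g}_\varepsilon$ can be written as $x+y+z$ with $x$ in the Lie subalgebra generated by the $X_r^+$, $z$ in the one generated by the $X_r^-$, and $y$ in the span of the $H_r$. This follows by an easy induction on bracket length using (H), (T) and (C)$_\varepsilon$ (the relation (C)$_\varepsilon$ reduces any mixed bracket $[X_r^+,X_s^-]$ to the Cartan part regardless of the value of $\varepsilon_r$). Then, since $\n^\pm$ are universal for the Serre relations (S), the plus and minus pieces each have dimension at most $|\Delta^+|$, giving $\dim\widetilde{\g}_\varepsilon\le 2|\Delta^+|+l=\dim\g$. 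With this replacement your proof goes through and coincides with the paper's.
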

\begin{proof}
It is straightforward to verify that there is a~unique Lie $^*$-algebra homomorphism
$\phi\colon \widetilde{\g}_{\varepsilon} \rightarrow \g_{\varepsilon}\subseteq \g\oplus\g$ such that $\phi(X_r^+)
= (\varepsilon_r X_r^+,X_r^+)$, $\phi(X_r^-) = (X_r^-,\varepsilon_rX_r^-)$ and $\phi(H_r) = (h_r,h_r)$.
It is obviously surjective.
On the other hand, it is easy to see by induction that each element of~$\widetilde{\g}_{\varepsilon}$ can
be written in the form $x+y+z$ with $x$ in the Lie algebra generated by the $X_r^+$'s, $z$ in the Lie
algebra generated by the $X_r^-$'s, and $y$ in the linear span of the $H_r$.
Since $\n^{\pm}$ are universal with respect to the relations~(S), we find that
$\dim(\widetilde{\g}_{\varepsilon}) \leq \dim(\g_{\varepsilon})$, hence $\phi$ is bijective.
\end{proof}

\subsection*{Acknowledgments}

It is a~pleasure to thank the following people for discussions on topics
related to the subject of this paper: J.~Bichon, P.~Bieliavsky, H.P.~Jakobsen, E.~Koelink, S.~Kolb,
U.~Kr\"{a}hmer and S.~Neshveyev.

\pdfbookmark[1]{References}{ref}
\LastPageEnding

\end{document}